\documentclass[pdflatex,iicol,sn-basic]{sn-jnl}
\pdfoutput=1

\usepackage{lipsum}
\usepackage{amsmath}
\usepackage{cool}
\usepackage{mathtools}
\usepackage{dirtytalk}
\usepackage{cuted}
\usepackage{relsize}
\usepackage[font=small]{caption}
\usepackage{subcaption}

\usepackage{ifthen}
\usepackage[normalem]{ulem}

\newboolean{AnnotateChanges}
\setboolean{AnnotateChanges}{false} 

\ifthenelse{\boolean{AnnotateChanges}}{%
\definecolor{annotateColor}{RGB}{0,0,255}
}{%
\definecolor{annotateColor}{RGB}{0,0,0} 
}



\newcommand{\added}[2][]{%
    \ifthenelse{\boolean{AnnotateChanges}}{%
        \textcolor{annotateColor}{#2}\textbf{\textcolor{red}{#1}}}{#2}}
        
\newcommand{\deleted}[2][]{%
    \ifthenelse{\boolean{AnnotateChanges}}{%
        \textcolor{annotateColor}{\sout{#2}}\textbf{\textcolor{red}{#1}}}{}}
        
\newcommand{\replaced}[3][]{%
    \ifthenelse{\boolean{AnnotateChanges}}{%
        \textcolor{annotateColor}{#2\sout{#3}}\textbf{\textcolor{red}{#1}}}{#2}}

\setlength\stripsep{3pt plus 1pt minus 1pt}

\newcommand{\varr}[1]{\mathlarger{\delta}{\mathsmaller{\tilde{#1}}}}

\algrenewcommand\algorithmicrequire{\textbf{Initialisation:}}

\usepackage{stfloats}


\jyear{2023}

\theoremstyle{thmstyleone}%
%

\theoremstyle{thmstyletwo}%

\theoremstyle{thmstylethree}%
\newtheorem{definition}{Definition}%
\newtheorem{lemma}{Lemma}
\newtheorem{weak}{Weak form}

\raggedbottom

\begin{document}

\title[\textcolor{white}{.}]{A Hilbertian projection method for constrained level set-based topology optimisation}

\author*[1]{\fnm{Zachary J.} \sur{Wegert}}\email{zach.wegert@hdr.qut.edu.au}
\author[1]{\fnm{Anthony P.} \sur{Roberts}}\email{ap.roberts@qut.edu.au}
\author*[1]{\fnm{Vivien J.} \sur{Challis}}\email{vivien.challis@qut.edu.au}

\affil[1]{\orgdiv{School of Mathematical Sciences}, \orgname{Queensland University of Technology}, \orgaddress{\street{2 George St}, \city{Brisbane}, \postcode{4000}, \state{QLD}, \country{Australia}}}

\abstract{We present an extension of the projection method proposed by Challis et al. (Int J Solids Struct 45(14--15):4130--4146, 2008) for constrained level set-based topology optimisation that harnesses the Hilbertian velocity extension-regularisation framework. Our Hilbertian projection method chooses a normal velocity for the level set function as a linear combination of (1) an orthogonal projection operator applied to the extended optimisation objective shape sensitivity and (2) a weighted sum of orthogonal basis functions for the extended constraint shape sensitivities. This combination aims for the best possible first-order improvement of the optimisation objective in addition to first-order improvement of the constraints. Our formulation utilising basis orthogonalisation naturally handles linearly dependent constraint shape sensitivities. Furthermore, use of the Hilbertian extension-regularisation framework ensures that the resulting normal velocity is extended away from the boundary and enriched with additional regularity. Our approach is generally applicable to any topology optimisation problem to be solved in the level set framework. We consider several benchmark constrained microstructure optimisation problems and demonstrate that our method is effective with little-to-no parameter tuning. We also find that our method performs well when compared to a Hilbertian sequential linear programming method.
}

\keywords{Level set method, topology optimisation, constraints, Hilbertian projection method}

\maketitle

\section{Introduction}\label{sec: intro}
The field of topology optimisation has enjoyed rapid growth owing to improved computing power, new optimisation techniques, and application to a wide range of design problems \citep{TopOptMonograph, DeatonGrandhi2013, TopOptReviewSigmund}. Classical computational methods for topology optimisation include density-based methods \citep{Bendsoe89, Rozvanyetal1992} in which the design variables are material densities of elements/nodes in a mesh, and level set-based methods \citep{10.1016/S0045-7825(02)00559-5_2003,10.1016/j.jcp.2003.09.032_2004} in which the boundary of the shape is implicitly tracked as the zero level set of a higher dimensional level set function. Conventional level set methods rely on the Hamilton-Jacobi evolution equation to update the design according to a normal velocity field defined on the boundary \cite[e.g.,][]{10.1016/S0045-7825(02)00559-5_2003,10.1016/j.jcp.2003.09.032_2004}. An important aspect of these methods is extending the normal velocity away from the boundary. To this end, the Hilbertian velocity extension-regularisation framework, which is well known in the context of level set methods (see discussion by \cite{10.1016/bs.hna.2020.10.004_978-0-444-64305-6_2021}), can be used to generate a velocity field that guarantees a descent direction and has additional regularity (smoothness) over the whole computational domain. 

Topology optimisation problems often include multiple constraints. In density-based topology optimisation \citep{Bendsoe89, Rozvanyetal1992} the application of constraints is usually straightforward and handled by the optimisation algorithm (e.g., Method of Moving Asymptotes \citep{MMASvanberg}). 
In the context of conventional level set-based methods, applying constraints is more complicated.
The \textit{augmented Lagrangian method} \citep{978-0-387-30303-1_2006,10.1007/978-0-387-74759-0_517_978-0-387-74759-0_2009} is a classical approach for constrained optimisation problems. It converts a constrained optimisation problem into a sequence of unconstrained problems that are a combination of the classical Lagrangian method and quadratic penalty method. In a level set framework, applying the method is straightforward: the shape sensitivity of the augmented Lagrangian is used to inform the normal velocity for the Hamilton-Jacobi equation \cite[e.g.,][]{10.1016/j.cma.2014.01.010_2014,10.1007/s00158-016-1453-y_2016,10.3390/app11125578_2021}. However, the difficulty associated with tuning the accompanying parameters is problem dependent and scales with the number of constraints \citep{10.1016/bs.hna.2020.10.004_978-0-444-64305-6_2021}. The level-set \textit{sequential linear programming method} (SLP) \citep{10.1179/1743284715Y.0000000022_2015,10.1007/s00158-014-1174-z_2015} involves linearising the optimisation problem into a number of sub-problems that are then solved using a linear programming method (e.g., the simplex method \citep{10.1016/j.compstruc.2020.106265_2020}). For level set-based topology optimisation, applying SLP is fairly straightforward except for implementing appropriate trust region constraints \citep{10.1007/s00158-014-1174-z_2015}. 
The \textit{projection method} \citep{10.1016/j.cma.2003.10.008_2004,10.1016/j.ijsolstr.2008.02.025_2008} projects the objective shape sensitivity onto a space that will leave the constraints unchanged and combines this with constraint shape sensitivities. This approach has been used to successfully design material microstructures subject to isotropy constraints \citep{10.1016/j.ijsolstr.2008.02.025_2008} but has not been widely adopted in the literature. However, similar methods have more recently been proposed in the literature by \cite{10.3934/dcdsb.2019249} and \cite{10.1051/cocv/2020015_2020} for level set topology optimisation. The projection method and these two recent works are examples of general null-space gradient methods \cite[e.g.,][]{978-0-387-30303-1_2006}.

It is natural to consider methods of constrained optimisation that take advantage of the Hilbertian extension-regularisation framework. For example, \cite{10.1016/bs.hna.2020.10.004_978-0-444-64305-6_2021} recently presented an SLP method in the Hilbertian framework. In this paper we revisit the projection method from \citet{10.1016/j.ijsolstr.2008.02.025_2008} and combine it with the Hilbertian extension-regularisation procedure. Our method constructs an orthogonal basis that spans the set of extended constraint shape sensitivities and an orthogonal projection operator that projects onto the set perpendicular to the extended constraint shape sensitivities. We then define the normal velocity for the level set function as a linear combination of the orthogonal projection operator applied to the extended objective function shape sensitivity and a weighted sum of basis functions for the extended constraint shape sensitivities. This normal velocity is naturally extended onto the bounding domain and endowed with additional regularity due to the Hilbertian extension-regularisation. While our method is similar to other recently proposed approaches   \citep{10.3934/dcdsb.2019249,10.1051/cocv/2020015_2020}, our formulation utilising an orthogonal basis provides significant benefits.

To demonstrate our presented \emph{Hilbertian projection method} we consider several linear elastic microstructure optimisation (i.e., inverse homogenisation) problems with multiple constraints. The constraints naturally arise under the enforcement of symmetries for the effective material properties, such as isotropy. Irrespective of optimisation method, microstructure optimisation has been used successfully for a range of design problems including linear elastic materials with extremal properties \cite[e.g.,][]{10.1016/S0022-5096(99)00043-5_2000,10.1016/j.mechmat.2013.09.018_2014}, multifunctional composites \cite[e.g.,][]{10.1016/j.ijsolstr.2008.02.025_2008}, auxetic materials \cite[e.g.,][]{10.1016/j.cad.2016.09.009_2017}, piezoelectric materials \cite[e.g.,][]{10.1016/S0045-7825(98)80103-5_1998,10.1016/j.ijsolstr.2022.111666_2022}, and multi-material composites \cite[e.g.,][]{10.1080/03052150903443780_2010,10.1007/s00158-017-1688-2_2017}. In this work we consider maximising the bulk modulus with and without isotropy constraints and the design of auxetic and multi-phase materials. We compare our Hilbertian projection method with a Hilbertian sequential linear programming (SLP) method \cite[Sec. 5.3.2,][]{10.1016/bs.hna.2020.10.004_978-0-444-64305-6_2021} and show that the Hilbertian projection method is able to successfully handle these optimisation problems with little-to-no parameter tuning.

The remainder of the paper is as follows. In Section \ref{sec: math background} we discuss the mathematical background for the level set method, Hilbertian extension-regularisation procedure, and linear elastic microstructure optimisation. In Section \ref{sec: HPM} we formulate the Hilbertian projection method and compare our formulation to the null space method presented by \citet{10.1051/cocv/2020015_2020}. In Section \ref{sec: numerical impl} we discuss our numerical implementation. In Section \ref{sec: eg problems} we present and discuss the example optimisation problems and results. Finally, in Section \ref{sec: concl} we present our concluding remarks.

\section{Mathematical background}\label{sec: math background}
In this section we give a brief introduction to the level set method for topology optimisation and shape derivatives. We then discuss the Hilbertian extension-regularisation framework. 
We conclude by describing linear elastic microstructure optimisation for single- and multi-phase materials.

\subsection{The level set method}\label{subsec: bg - LS method}
Level set methods track the boundary of a domain $\Omega$ inside a bounding domain $D\subset\mathbb{R}^d$ implicitly via the zero level set of a function $\phi:D\rightarrow\mathbb{R}$ \citep{978-0-521-57202-6_1996,978-0-387-22746-7_2006}. For a domain $\Omega$ inside a bounding domain $D$, the level set function $\phi$ is typically defined as 
\begin{equation}
\begin{cases}\phi(\boldsymbol{x})<0&\text{if } \boldsymbol{x} \in \Omega, \\ \phi(\boldsymbol{x})=0 &\text{if } \boldsymbol{x} \in \partial \Omega, \\\phi(\boldsymbol{x})>0 &\text{if } \boldsymbol{x} \in D \backslash \bar{\Omega}.\end{cases}
\end{equation}
Using this definition and assuming that the interface may evolve in time, a material derivative of $\phi$ on $\partial\Omega$ gives 
\begin{equation}
    \pderiv{\phi}{t}(t,\boldsymbol{x}) + v(t,\boldsymbol{x})\lvert\nabla\phi(t,\boldsymbol{x})\rvert = 0,
\end{equation}
where $v$ is the normal velocity of the interface. In practice, the above is solved over the whole bounding domain $D$ instead of only on the interface $\partial\Omega$ by extending the velocity $v$ away from the boundary. Assuming that the time interval $(0,T)$ is small so that the velocity does not vary in time gives the Hamilton-Jacobi evolution equation \citep{978-0-521-57202-6_1996,978-0-387-22746-7_2006,10.1016/bs.hna.2020.10.004_978-0-444-64305-6_2021}:
\begin{align}\label{eqn: HJ}
    \begin{cases}
    \pderiv{\phi}{t}(t,\boldsymbol{x}) + v(\boldsymbol{x})\lvert\nabla\phi(t,\boldsymbol{x})\rvert = 0,\\
    \phi(0,\boldsymbol{x})=\phi_0(\boldsymbol{x}),\\
    \boldsymbol{x}\in D,~t\in(0,T),
    \end{cases}
\end{align}
where $\phi_0(\boldsymbol{x})$ is the initial condition for $\phi$ at $t=0$.

It is often useful to reinitialise the level set function as the \textit{signed distance function} $d_\Omega$. This ensures that the level set function is neither too steep nor too flat near the boundary of $\Omega$ \citep{978-0-387-22746-7_2006}. The signed distance function may be defined as \citep{10.1016/bs.hna.2020.10.004_978-0-444-64305-6_2021}:
\begin{equation}
d_{\Omega}(\boldsymbol{x})=\begin{cases}
-d(\boldsymbol{x}, \partial \Omega)&\text{if }\boldsymbol{x} \in \Omega, \\
0&\text{if } \boldsymbol{x} \in \partial \Omega, \\
d(\boldsymbol{x}, \partial \Omega)&\text{if } \boldsymbol{x} \in D \backslash \bar{\Omega},
\end{cases}
\end{equation}
where $d(\boldsymbol{x}, \partial \Omega):=\min _{\boldsymbol{p} \in \partial \Omega}\lvert\boldsymbol{x}-\boldsymbol{p}\rvert$ is the minimum Euclidean distance from $\boldsymbol{x}$ to the boundary $\partial \Omega$. Several methods are available for constructing the signed distance function and the reader is referred to \cite{978-0-387-22746-7_2006} and \cite{10.1016/bs.hna.2020.10.004_978-0-444-64305-6_2021} and the references therein for a detailed discussion. In this work we use the following \textit{reinitialisation equation} \citep{10.1006/jcph.1999.6345_1999,978-0-387-22746-7_2006} to reinitialise a pre-existing level set function $\phi_0(\boldsymbol{x})$ as the signed distance function:
\begin{align}\label{eqn: reinit}
\begin{cases}
    \pderiv{\phi}{t}(t,\boldsymbol{x}) + S(\phi_0(\boldsymbol{x}))\left(\lvert\nabla\phi(t,\boldsymbol{x})\rvert-1\right) = 0,\\
    \phi(0,\boldsymbol{x})=\phi_0(\boldsymbol{x}),\\
    \boldsymbol{x}\in D, t>0.
\end{cases}
\end{align}
Here $S$ is the sign function and Equation~\ref{eqn: reinit} is solved until close to steady state. Similar numerical schemes may be used to solve for Hamilton-Jacobi evolution and reinitialisation (Eqs.~\ref{eqn: HJ} and \ref{eqn: reinit}).

\subsection{Shape derivatives}\label{subsec: bg - LS shape deriv}
To find a normal velocity $v$ that reduces some functional $J(\Omega)$ via solution of the Hamilton-Jacobi equation (Eq.~\ref{eqn: HJ}) we use the notion of shape derivatives. We recall the following useful results from \cite{10.1016/j.jcp.2003.09.032_2004,10.1016/bs.hna.2020.10.004_978-0-444-64305-6_2021}. 

Suppose that we consider smooth variations of the domain $\Omega$ of the form $\Omega_{\boldsymbol{\theta}} =(\boldsymbol{I}+\boldsymbol{\theta})(\Omega)$, where $\boldsymbol{\theta} \in W^{1,\infty}(\mathbb{R}^d,\mathbb{R}^d)$. Then the following definition and lemma follow:
\begin{definition}[\cite{10.1016/j.jcp.2003.09.032_2004}]
The shape derivative of $J(\Omega)$ at $\Omega$ is defined as the Fr\'echet derivative in $W^{1, \infty}(\mathbb{R}^d, \mathbb{R}^d)$ at $\boldsymbol{\theta}$ of the application $\boldsymbol{\theta} \rightarrow J(\Omega_{\boldsymbol{\theta}})$, i.e.,
\begin{equation}\label{eqn: shape deriv defin}
J(\Omega_{\boldsymbol{\theta}})(\Omega))=J(\Omega)+J^{\prime}(\Omega)(\boldsymbol{\theta})+\mathrm{o}(\boldsymbol{\theta})  
\end{equation}
with $\lim _{\boldsymbol{\theta} \rightarrow 0} \frac{\lvert\mathrm{o}(\boldsymbol{\theta})\rvert}{\|\boldsymbol{\theta}\|}=0,$ where the shape derivative $J^{\prime}(\Omega)$ is a continuous linear form on $W^{1, \infty}(\mathbb{R}^d, \mathbb{R}^d)$.
\end{definition}
\begin{lemma}[\cite{10.1016/j.jcp.2003.09.032_2004}]\label{lemma: omega int scalar}
Let $\Omega$ be a smooth bounded open set and $f \in W^{1,1}(\mathbb{R}^d)$. Define
\begin{equation}
J(\Omega)=\int_{\Omega} f~\mathrm{d} \Omega .
\end{equation}
Then $J$ is differentiable at $\Omega$ and
\begin{equation}
J^{\prime}(\Omega)(\boldsymbol{\theta})=\int_{\Omega} \operatorname{div}(\boldsymbol{\theta} f)~ \mathrm{d} \Omega=\int_{\partial \Omega} f~\boldsymbol{\theta} \cdot \boldsymbol{n} ~ \mathrm{d} \Gamma
\end{equation}
for any $\boldsymbol{\theta} \in W^{1, \infty}(\mathbb{R}^d, \mathbb{R}^d)$.
\end{lemma}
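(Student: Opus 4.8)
The plan is to pull the integral over the perturbed domain $\Omega_{\boldsymbol{\theta}}=(\boldsymbol{I}+\boldsymbol{\theta})(\Omega)$ back onto the fixed reference domain $\Omega$ via the change of variables $\boldsymbol{y}=(\boldsymbol{I}+\boldsymbol{\theta})(\boldsymbol{x})$, expand the resulting integrand to first order in $\boldsymbol{\theta}$, and read off the continuous linear form $J'(\Omega)(\boldsymbol{\theta})$. For $\boldsymbol{\theta}$ sufficiently small in $W^{1,\infty}$ the map $\boldsymbol{I}+\boldsymbol{\theta}$ is a diffeomorphism with positive Jacobian, so
\begin{equation}
J(\Omega_{\boldsymbol{\theta}})=\int_{\Omega_{\boldsymbol{\theta}}}f~\mathrm{d}\Omega=\int_{\Omega}\big(f\circ(\boldsymbol{I}+\boldsymbol{\theta})\big)\,\det(\boldsymbol{I}+\nabla\boldsymbol{\theta})~\mathrm{d}\Omega.
\end{equation}

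First I would expand the two factors of the integrand separately. Using $\det(\boldsymbol{I}+\boldsymbol{A})=1+\operatorname{tr}\boldsymbol{A}+O(\lvert\boldsymbol{A}\rvert^2)$ together with $\operatorname{tr}(\nabla\boldsymbol{\theta})=\operatorname{div}\boldsymbol{\theta}$ gives $\det(\boldsymbol{I}+\nabla\boldsymbol{\theta})=1+\operatorname{div}\boldsymbol{\theta}+\mathrm{o}(\boldsymbol{\theta})$, while a first-order Taylor expansion of the composition gives $f\circ(\boldsymbol{I}+\boldsymbol{\theta})=f+\nabla f\cdot\boldsymbol{\theta}+\mathrm{o}(\boldsymbol{\theta})$. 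Multiplying these expansions and absorbing the quadratic term $(\nabla f\cdot\boldsymbol{\theta})(\operatorname{div}\boldsymbol{\theta})$ into the remainder leaves the first-order contribution $\nabla f\cdot\boldsymbol{\theta}+f\operatorname{div}\boldsymbol{\theta}$, which the product rule identifies as $\operatorname{div}(\boldsymbol{\theta}f)$. Hence
\begin{equation}
J(\Omega_{\boldsymbol{\theta}})=J(\Omega)+\int_{\Omega}\operatorname{div}(\boldsymbol{\theta}f)~\mathrm{d}\Omega+\mathrm{o}(\boldsymbol{\theta}),
\end{equation}
which both establishes differentiability and yields the first claimed equality for $J'(\Omega)(\boldsymbol{\theta})$. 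Since $\boldsymbol{\theta}f\in W^{1,1}(\mathbb{R}^d,\mathbb{R}^d)$ and $\partial\Omega$ is smooth, the divergence theorem then converts this volume integral into the boundary integral $\int_{\partial\Omega}f\,\boldsymbol{\theta}\cdot\boldsymbol{n}~\mathrm{d}\Gamma$, giving the second equality.

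The hard part will be making the remainder estimates genuinely $\mathrm{o}(\|\boldsymbol{\theta}\|_{W^{1,\infty}})$ in view of the weak regularity $f\in W^{1,1}$. The determinant expansion is elementary and the divergence theorem is standard, so the crux is the differentiability of the pullback operator $\boldsymbol{\theta}\mapsto f\circ(\boldsymbol{I}+\boldsymbol{\theta})$ as a map from $W^{1,\infty}$ into $L^1$, whose Fr\'echet derivative at $\boldsymbol{\theta}=0$ is $\boldsymbol{\theta}\mapsto\nabla f\cdot\boldsymbol{\theta}$. I would prove this by a density argument: verify the expansion directly for smooth $f$, where the Taylor remainder is controlled pointwise, then bound the $L^1$ norm of the remainder uniformly in terms of $\|\boldsymbol{\theta}\|_{W^{1,\infty}}$ using the change-of-variables estimate on the Jacobian, and finally pass to the limit using the density of smooth functions in $W^{1,1}(\mathbb{R}^d)$. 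Continuity and linearity of the resulting form $J'(\Omega)$ on $W^{1,\infty}$ follow immediately from the bound $\lvert\int_{\Omega}\operatorname{div}(\boldsymbol{\theta}f)~\mathrm{d}\Omega\rvert\le C\|f\|_{W^{1,1}}\|\boldsymbol{\theta}\|_{W^{1,\infty}}$.
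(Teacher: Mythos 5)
This lemma is one the paper explicitly \emph{recalls} from the literature (it is attributed to Allaire, Jouve and Toader, 2004) and is stated without any proof in the text, so there is no in-paper argument to compare against; your proposal must therefore be judged on its own merits, and on those merits it is correct. It is in fact the standard transport (pullback) proof found in the cited literature: change variables via $\boldsymbol{y}=(\boldsymbol{I}+\boldsymbol{\theta})(\boldsymbol{x})$, expand $\det(\boldsymbol{I}+\nabla\boldsymbol{\theta})=1+\operatorname{div}\boldsymbol{\theta}+\mathrm{o}(\boldsymbol{\theta})$ in $L^\infty$ and $f\circ(\boldsymbol{I}+\boldsymbol{\theta})=f+\nabla f\cdot\boldsymbol{\theta}+\mathrm{o}(\boldsymbol{\theta})$ in $L^1$, collect the first-order terms into $\operatorname{div}(\boldsymbol{\theta}f)$, and convert to a boundary integral. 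You also correctly isolated the only genuinely delicate point, namely Fr\'echet differentiability at $\boldsymbol{0}$ of the pullback $\boldsymbol{\theta}\mapsto f\circ(\boldsymbol{I}+\boldsymbol{\theta})$ from $W^{1,\infty}$ into $L^1$ when $f$ is merely $W^{1,1}$, and the density argument you sketch (prove it for $f\in C_c^\infty$, control the remainder uniformly using the bi-Lipschitz change-of-variables bound on the Jacobian for $\|\boldsymbol{\theta}\|_{W^{1,\infty}}$ small, then pass to the limit) is exactly the classical way to close it. Two details worth making explicit if you write this out in full: the cross terms such as $(\nabla f\cdot\boldsymbol{\theta})\operatorname{div}\boldsymbol{\theta}$ are $O(\|\boldsymbol{\theta}\|_{W^{1,\infty}}^2)$ in $L^1$ and so genuinely absorbable into the remainder; and the final step uses the divergence theorem for the $W^{1,1}$ field $\boldsymbol{\theta}f$ on a smooth bounded domain, which is legitimate precisely because $W^{1,1}(\Omega)$ functions admit $L^1(\partial\Omega)$ traces.
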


C\'ea's formal method \citep{10.1051/m2an/1986200303711_1986} can be applied to find the shape derivative of a functional $J$ that depends on fields that satisfy specified state equations \cite[e.g.,][]{10.1016/j.jcp.2003.09.032_2004,10.1016/bs.hna.2020.10.004_978-0-444-64305-6_2021}. The method relies on defining a Lagrangian functional $\mathcal{L}$ that satisfies the two following properties:
\begin{enumerate}
    \item The state equations are generated by stationarity of  $\mathcal{L}$ under variations of the fields.
    \item $\mathcal{L}$ is equal to the functional of interest  $J$ at the solution to the state equations.
\end{enumerate}
Once these properties are satisfied the shape derivative of the functional of interest can be found using Lemma \ref{lemma: omega int scalar} \citep{10.1016/j.jcp.2003.09.032_2004}. 


\subsection{Hilbertian extension- regularisation}\label{subsec: bg - Hilb. ext.-reg.}
To infer a descent direction from $J^\prime(\Omega)$ we utilise the Hilbertian extension-regularisation method as discussed by \cite{10.1016/bs.hna.2020.10.004_978-0-444-64305-6_2021}. This involves solving an identification problem over a Hilbert space $H$ on $D$ with inner product $\langle\cdot,\cdot\rangle_H$: \textit{Find $g_\Omega\in H$ such that }
\begin{equation}
    \langle g_\Omega,w\rangle_H=-J^{\prime}(\Omega)(w\boldsymbol{n})~\forall w\in H.
\end{equation}
For an unconstrained optimisation problem the resulting field $g_\Omega$ is the extended shape sensitivity that is used to evolve the interface with $\boldsymbol{\theta}=\tau g_\Omega\boldsymbol{n}$ where $\tau>0$ is sufficiently small. 

The Hilbertian extension-regularisation method provides two important benefits: it naturally extends the shape sensitivity from $\partial\Omega$ onto the bounding domain $D$, and ensures a descent direction for $J(\Omega)$ with additional regularity (i.e., $H$ as opposed to $L^2(\partial\Omega)$) \citep{10.1016/bs.hna.2020.10.004_978-0-444-64305-6_2021}. As discussed by \cite{10.1016/bs.hna.2020.10.004_978-0-444-64305-6_2021}, this may be viewed as an analog to the sensitivity filtering used in density-based topology optimisation algorithms. 

A common choice for the Hilbert space $H$ is $H^1(D)$ with the inner product 
\begin{equation}\label{eqn: H1 inner product}
    \langle u,v\rangle_{H}=\beta^2\int_D\nabla u\cdot\nabla v~\mathrm{d}\Omega+\int_Duv~\mathrm{d}\Omega,
\end{equation}
where $\beta$ is the so-called regularisation length scale \cite[e.g.,][]{10.1007/s00158-016-1453-y_2016,10.1007/s40324-018-00185-4_2019,10.1007/s10957-021-01928-6_2021}. For microstructure optimisation we use the periodic Sobolev space $H=H^1_{\text{per}}(D)$ and use the inner product defined in Equation \ref{eqn: H1 inner product}.

\subsection{Linear elastic microstructure optimisation}\label{subsec: bg - LE Micro}
In this section we briefly discuss computational homogenisation and topology optimisation in the context of periodic microstructure design.

The state equations for linear elastic homogenisation over a domain $\Omega$ contained in a representative volume element (RVE) $D\subset\mathbb{R}^d$ under an applied strain field $\bar{\varepsilon}_{i j}$ are \cite[e.g.,][]{YvonnetCompHomogenization}
\begin{align}
    -\sigma_{ij,i}&= 0~\text{in }\Omega,\label{eqn:le1}\\
    \sigma_{ij}n_j&= 0~\text{on }\partial\Omega,\\
    \sigma_{i j}&=C_{i j k l} \varepsilon_{k l},\\
    \varepsilon_{ij}&=\frac{1}{2}\left(u_{i,j}+u_{j,i}\right), \\
    \frac{1}{\operatorname{Vol}(D)}\int_{\Omega}{\varepsilon_{kl}}~\mathrm{d}\Omega&=\bar{\varepsilon}_{kl},
    \label{eqn:lelast}
\end{align}
where $\sigma_{i j}$ is the stress tensor, $\varepsilon_{i j}=\varepsilon_{i j}(\boldsymbol{u})$ is the $D$-periodic strain field with displacement $\boldsymbol{u}$, and $C_{i j k l}$ is the spatially dependent elasticity tensor. Note that in the above we use summation notation for indices and comma notation for derivatives. 

To compute the homogenised stiffness tensor $\bar{C}_{ijkl}$ of a periodic material, the above state equations are solved over $\Omega$ for three ($d=2$) or six ($d=3$) different combinations of macroscopic strain fields. These macroscopic strain fields are applied by decomposing the strain into the constant macroscopic strain field and fluctuation strain field as $\varepsilon_{i j}=\bar{\varepsilon}_{i j}+\tilde{\varepsilon}_{i j}$. The macroscopic strain fields are then given by the unique components of $\bar{\varepsilon}_{i j}^{(k l)}=\frac{1}{2}\left(\delta_{i k} \delta_{j l}+\delta_{i l} \delta_{j k}\right)$ in $k$ and $l$. For example, in two dimensions the unique macroscopic strains are: $\bar{\varepsilon}_{i j}^{(11)}$, $\bar{\varepsilon}_{i j}^{(22)}$, $\bar{\varepsilon}_{i j}^{(12)}$. The notation $\tilde{\varepsilon}_{i j}^{(k l)}$ is used to denote the strain field fluctuation arising from the applied strain field $\bar{\varepsilon}_{i j}^{(k l)}$.

In practice, Equations \ref{eqn:le1}-\ref{eqn:lelast} are solved using a finite element method and the weak formulation given here:
\begin{weak}\label{wf: LE hom}
\textit{For each unique constant macroscopic strain field $\bar{\varepsilon}_{i j}^{(k l)}$, find $\tilde{\boldsymbol{u}}^{(kl)}\in H^1_{\textrm{per}}(\Omega)^d$ such that
\begin{equation}
    \begin{aligned}
    &\int_{\Omega} C_{pqrs} \varepsilon_{rs}(\tilde{\boldsymbol{u}}^{(k l)}) \varepsilon_{pq}(\boldsymbol{v}) ~\mathrm{d} \Omega\\& \hspace{0.5cm} = -\int_{\Omega} C_{pqrs} \bar{\varepsilon}_{rs}^{(kl)} \varepsilon_{pq}(\boldsymbol{v}) ~\mathrm{d} \Omega~~\forall\, \boldsymbol{v}\in H^1_{\textrm{per}}(\Omega)^d\label{eqn: le weak form}
    \end{aligned}
\end{equation}
where $\varepsilon_{ij}(\boldsymbol{v})=\frac{1}{2}\left(v_{i,j}+v_{j,i}\right).$}
\end{weak}

\subsubsection{Single-phase problems}\label{subsubsec: LE - single phase theory}
For single-phase problems (one solid and a void phase), once the solution $\tilde{\boldsymbol{u}}^{(ij)}$ to Weak Form \ref{wf: LE hom} has been found for each unique macroscopic strain $\bar{\varepsilon}_{pq}^{(ij)}$, the resulting homogenised stiffness tensor may be computed via \citep{YvonnetCompHomogenization}
\begin{equation}
    \bar{C}_{ijkl}(\Omega) = \int_\Omega C_{pqrs}({\varepsilon}_{pq}(\tilde{\boldsymbol{u}}^{(ij)})+\bar\varepsilon^{(ij)}_{pq})\bar\varepsilon_{rs}^{(kl)}~\mathrm{d}\Omega
    \label{eqn: le hom},
\end{equation}
assuming that $\operatorname{Vol}(D)=1$.

To evaluate Weak Form \ref{wf: LE hom} and the homogenised stiffness tensor above, we utilise the \textit{ersatz material approximation}. This method, which is classical in the literature \cite[e.g.,][]{10.1016/j.jcp.2003.09.032_2004}, fills the void phase with a soft material so that the state equations can be resolved without a body-fitted mesh. To this end, for small $\varepsilon_{\mathrm{void}}$ we take
\begin{equation}\label{eqn: sharp ersatz}
    C_{ijkl}(\boldsymbol{x}) = \begin{cases}
        C_{ijkl},& \boldsymbol{x}\in \Omega\\
        \varepsilon_{\mathrm{void}} C_{ijkl},& \boldsymbol{x}\in D\setminus\Omega
    \end{cases}
\end{equation}
and relax integration to be over $D$. We can provide a smooth approximation to Equation \ref{eqn: sharp ersatz} using a smoothed Heaviside function $H_{\eta}$
\begin{equation}
    H_\eta(\phi)=\begin{cases}
    0&\text{if }\phi<-\eta,\\
    \frac{1}{2}+\frac{\phi}{2\eta}+\frac{1}{2\pi}\sin\left(\frac{\pi\phi}{\eta}\right)&\text{if }\lvert\phi\rvert\leq\eta,\\
    1&\text{if }\phi>\eta,
    \end{cases}
\end{equation}
where $\eta$ is half the length of the small transition region of $H_\eta(\phi)$ between 0 and 1. Equation \ref{eqn: sharp ersatz} can then be replaced with
\begin{equation}\label{eqn: smooth ersatz}
    C_{ijkl}(\phi) = C_{ijkl}(1-H_\eta(\phi)) + \varepsilon_{\mathrm{void}} C_{ijkl} H_\eta(\phi).
\end{equation}
It is important to note that the ersatz material approximation is consistent \citep{10.1016/bs.hna.2020.10.004_978-0-444-64305-6_2021}. That is, as $\varepsilon_{\mathrm{void}}\rightarrow0$, the approximation becomes exact.

We conclude this section by stating the shape derivative of the homogenised stiffness tensor: 
\begin{lemma}
The shape derivative of Equation \ref{eqn: le hom} is given by
\begin{equation}
\begin{aligned}
    &\bar{C}_{ijkl}^{\prime}(\Omega)(\boldsymbol{\theta})= \int_{\partial\Omega}C_{pqrs}({\varepsilon}_{pq}(\tilde{\boldsymbol{u}}^{(ij)})+\bar{\varepsilon}_{pq}^{(ij)})\\&\hspace{2cm}\times({\varepsilon}_{rs}(\tilde{\boldsymbol{u}}^{(kl)})+\bar{\varepsilon}_{rs}^{(kl)})~\boldsymbol{\theta}\cdot\boldsymbol{n}~\mathrm{d}{\Gamma}.
\end{aligned}
\end{equation}
\end{lemma}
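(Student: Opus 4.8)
The plan is to apply C\'ea's formal method, exactly the procedure outlined before Lemma~\ref{lemma: omega int scalar}, taking $\bar C_{ijkl}(\Omega)$ from Equation~\ref{eqn: le hom} as the functional of interest and Weak Form~\ref{wf: LE hom} (for the macroscopic strain $\bar\varepsilon^{(ij)}_{pq}$) as the state equation constraining the fluctuation field $\tilde{\boldsymbol u}^{(ij)}$. Note that only $\tilde{\boldsymbol u}^{(ij)}$ enters Equation~\ref{eqn: le hom} as a field, since the $(kl)$ label sits on the fixed constant strain $\bar\varepsilon^{(kl)}_{rs}$, so a single Lagrange multiplier suffices. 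First I would introduce an adjoint field $\boldsymbol q$ and define, on $H^1_{\textrm{per}}(\Omega)^d\times H^1_{\textrm{per}}(\Omega)^d$,
\[\mathcal{L}(\Omega,\boldsymbol v,\boldsymbol q)=\int_\Omega C_{pqrs}\bigl(\varepsilon_{pq}(\boldsymbol v)+\bar\varepsilon^{(ij)}_{pq}\bigr)\bar\varepsilon^{(kl)}_{rs}~\mathrm{d}\Omega+\int_\Omega C_{pqrs}\bigl(\varepsilon_{rs}(\boldsymbol v)+\bar\varepsilon^{(ij)}_{rs}\bigr)\varepsilon_{pq}(\boldsymbol q)~\mathrm{d}\Omega,\]
where the second integral is the weak residual of Weak Form~\ref{wf: LE hom} paired against $\boldsymbol q$. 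This satisfies the two defining properties by construction: setting $\boldsymbol v=\tilde{\boldsymbol u}^{(ij)}$ kills the residual for every $\boldsymbol q$, giving $\mathcal{L}=\bar C_{ijkl}$, while $\partial_{\boldsymbol q}\mathcal{L}=0$ returns Weak Form~\ref{wf: LE hom}.

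Next I would extract the adjoint equation from $\partial_{\boldsymbol v}\mathcal{L}=0$; after using the major symmetry $C_{pqrs}=C_{rspq}$ to recast the $\boldsymbol q$-term, this reads
\[\int_\Omega C_{pqrs}\,\varepsilon_{rs}(\boldsymbol q)\,\varepsilon_{pq}(\hat{\boldsymbol v})~\mathrm{d}\Omega=-\int_\Omega C_{pqrs}\,\bar\varepsilon^{(kl)}_{rs}\,\varepsilon_{pq}(\hat{\boldsymbol v})~\mathrm{d}\Omega\quad\forall\,\hat{\boldsymbol v}\in H^1_{\textrm{per}}(\Omega)^d.\]
The key observation is that this is \emph{identical} to Weak Form~\ref{wf: LE hom} for the macroscopic strain $\bar\varepsilon^{(kl)}_{rs}$: the problem is self-adjoint and its solution is simply $\boldsymbol q=\tilde{\boldsymbol u}^{(kl)}$, with no separate adjoint solve required. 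This is precisely what will fuse the two cell-problem solutions into the symmetric integrand of the stated result.

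Finally, C\'ea's method gives $\bar C_{ijkl}'(\Omega)(\boldsymbol\theta)=\partial_\Omega\mathcal{L}(\boldsymbol\theta)$ evaluated at $\boldsymbol v=\tilde{\boldsymbol u}^{(ij)}$, $\boldsymbol q=\tilde{\boldsymbol u}^{(kl)}$, the remaining terms in the total derivative being annihilated by the two stationarity conditions above. With the fields held fixed, $\mathcal{L}$ is an integral of a fixed density over $\Omega$, so Lemma~\ref{lemma: omega int scalar} turns $\partial_\Omega\mathcal{L}$ into $\int_{\partial\Omega} f\,\boldsymbol\theta\cdot\boldsymbol n~\mathrm{d}\Gamma$ with
\[f=C_{pqrs}\bigl(\varepsilon_{pq}(\tilde{\boldsymbol u}^{(ij)})+\bar\varepsilon^{(ij)}_{pq}\bigr)\bar\varepsilon^{(kl)}_{rs}+C_{pqrs}\bigl(\varepsilon_{rs}(\tilde{\boldsymbol u}^{(ij)})+\bar\varepsilon^{(ij)}_{rs}\bigr)\varepsilon_{pq}(\tilde{\boldsymbol u}^{(kl)}).\]
Writing $A_{pq}=\varepsilon_{pq}(\tilde{\boldsymbol u}^{(ij)})+\bar\varepsilon^{(ij)}_{pq}$ and applying $C_{pqrs}=C_{rspq}$ with a relabelling $pq\leftrightarrow rs$ in the second term, the two pieces combine into $C_{pqrs}A_{pq}\bigl(\bar\varepsilon^{(kl)}_{rs}+\varepsilon_{rs}(\tilde{\boldsymbol u}^{(kl)})\bigr)$, which is exactly the claimed integrand.

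The genuinely delicate step is the identity $\bar C_{ijkl}'(\Omega)(\boldsymbol\theta)=\partial_\Omega\mathcal{L}(\boldsymbol\theta)$, which tacitly assumes $\Omega\mapsto\tilde{\boldsymbol u}^{(ij)}$ is shape-differentiable and that the fields may be frozen as functions on $\mathbb{R}^d$ while differentiating the volume integral; this is exactly why the method is labelled \emph{formal}, and I would not attempt to make it rigorous here. I would also flag, without belabouring, that Lemma~\ref{lemma: omega int scalar} wants a $W^{1,1}$ density (so $C_{pqrs}$ is taken smooth on each phase, the solid--void interface supplying the boundary term) and that periodicity ensures the material derivative contributes nothing on $\partial D$, leaving only $\partial\Omega$. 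The closing symmetrisation is routine; the conceptual weight rests entirely on the self-adjoint identification and on the formal domain differentiation.
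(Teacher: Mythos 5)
Your proposal is correct and follows essentially the same route as the paper's own proof: C\'ea's formal method with a Lagrangian whose two terms are (after using the major symmetry $C_{pqrs}=C_{rspq}$ and relabelling indices) identical to the paper's, exploitation of self-adjointness to identify the adjoint with $\tilde{\boldsymbol{u}}^{(kl)}$, and a final application of Lemma~\ref{lemma: omega int scalar} at frozen fields. The only cosmetic difference is that you introduce the adjoint $\boldsymbol{q}$ explicitly and then identify it, whereas the paper writes the Lagrangian directly in $\tilde{\boldsymbol{u}}^{(ij)}$ and $\tilde{\boldsymbol{u}}^{(kl)}$ and notes that no auxiliary field is needed; this is the same argument in different packaging.
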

\begin{proof}
See Appendix \ref{secA1}.
\end{proof}

\subsubsection{Multi-phase problems}\label{subsubsec: LE - multi phase theory}
For multi-phase problems, we utilise the colour level set method in which up to $2^M$ phases can be represented by the sign of $M$ level set functions \citep{10.1016/j.cma.2003.10.008_2004,10.1051/cocv/2013076_2014}. For example, in the case of four phases $\Omega_1,\Omega_2, \Omega_3$ and $\Omega_4$ with two level set functions $\phi_1$ and $\phi_2$, we have
\begin{equation}
    \begin{cases}
        \phi_1<0~\&~\phi_2>0,&x\in\Omega_1\\
        \phi_1>0~\&~\phi_2<0,&x\in\Omega_2\\
        \phi_1<0~\&~\phi_2<0,&x\in\Omega_3\\
        \phi_1>0~\&~\phi_2>0,&x\in\Omega_4
    \end{cases}.
\end{equation}
We further denote the domains associated with each level set function $\phi_1$ and $\phi_2$ as $\mathcal{D}_1$ and $\mathcal{D}_2$, respectively. 
\begin{figure}[t]
    \centering
    \begin{subfigure}{0.94\linewidth}
		\includegraphics[width=\textwidth]{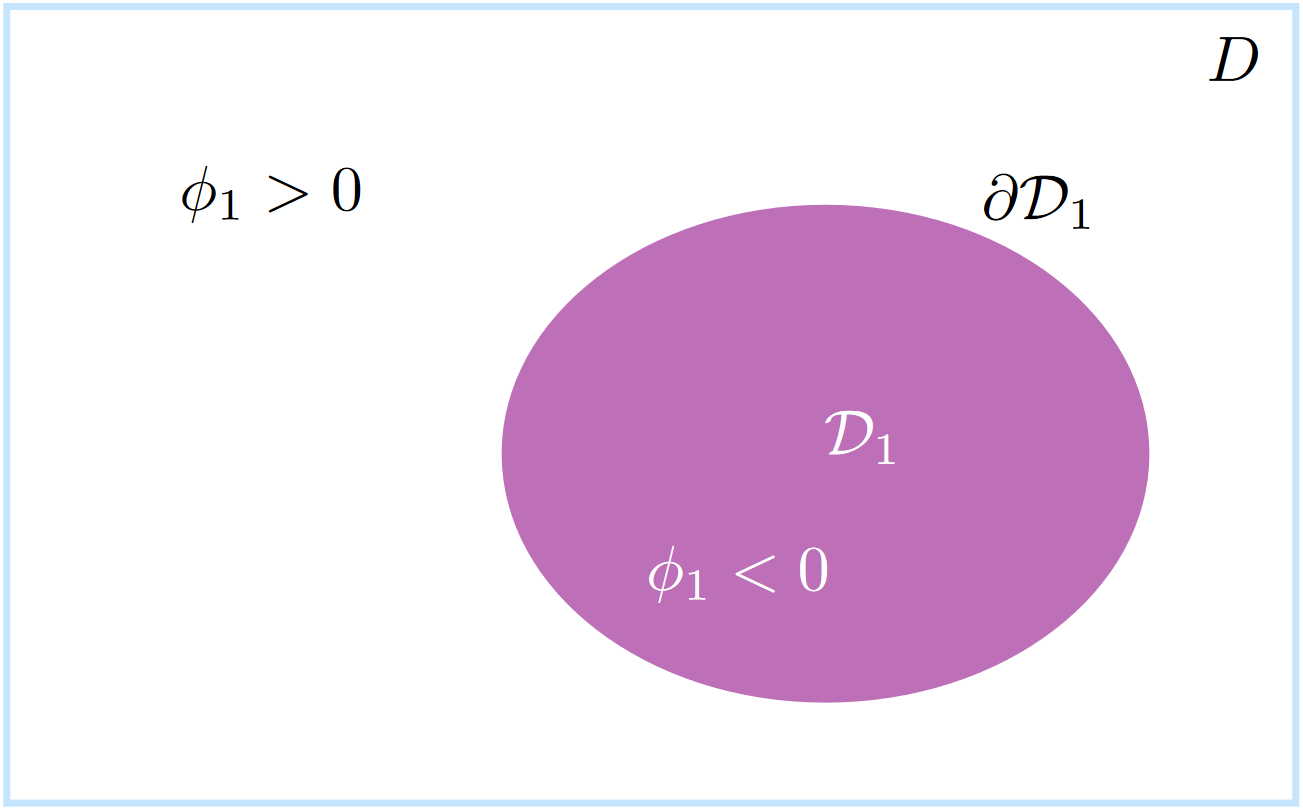}
        \caption{}
        \label{fig:Fig 1a}
    \end{subfigure}
    \begin{subfigure}{0.94\linewidth}
        \includegraphics[width=\textwidth]{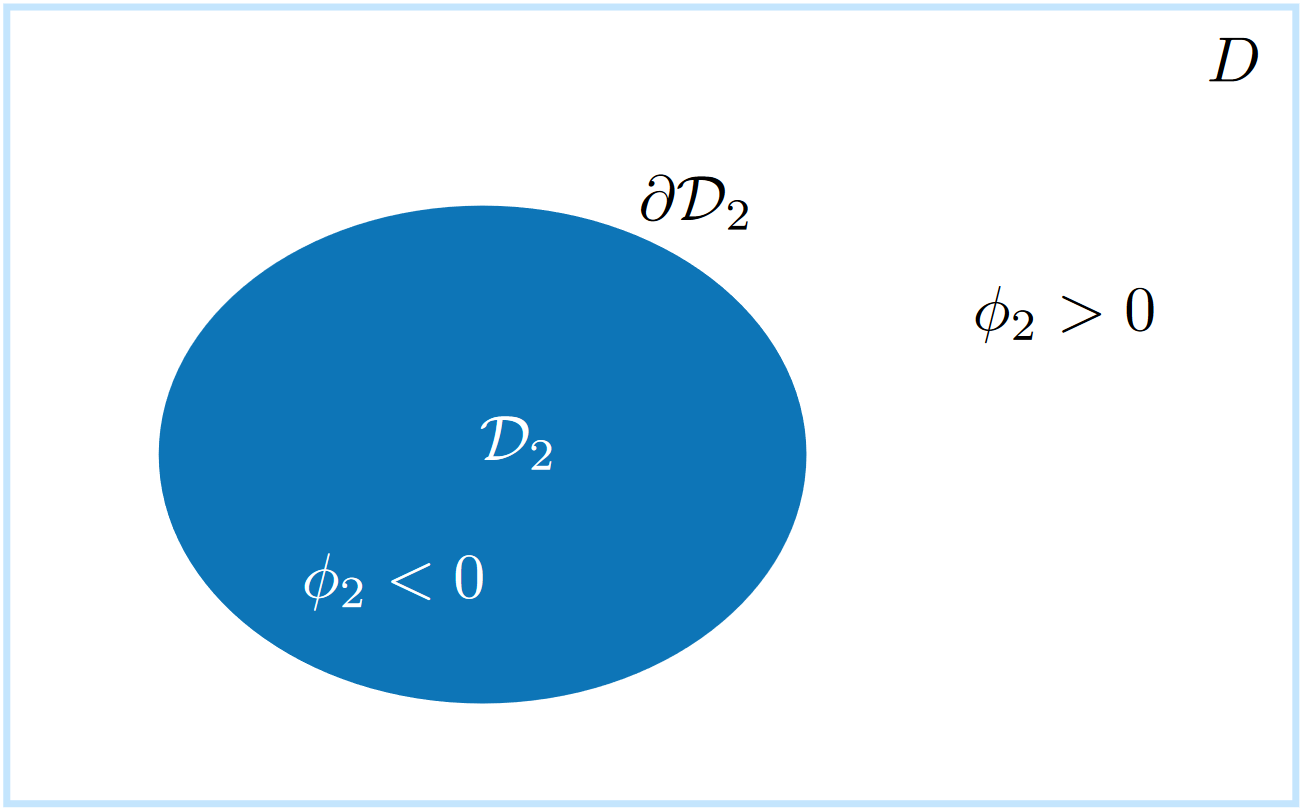}
        \caption{}
        \label{fig:Fig 1b}
    \end{subfigure}
    \begin{subfigure}{0.94\linewidth}
        \includegraphics[width=\textwidth]{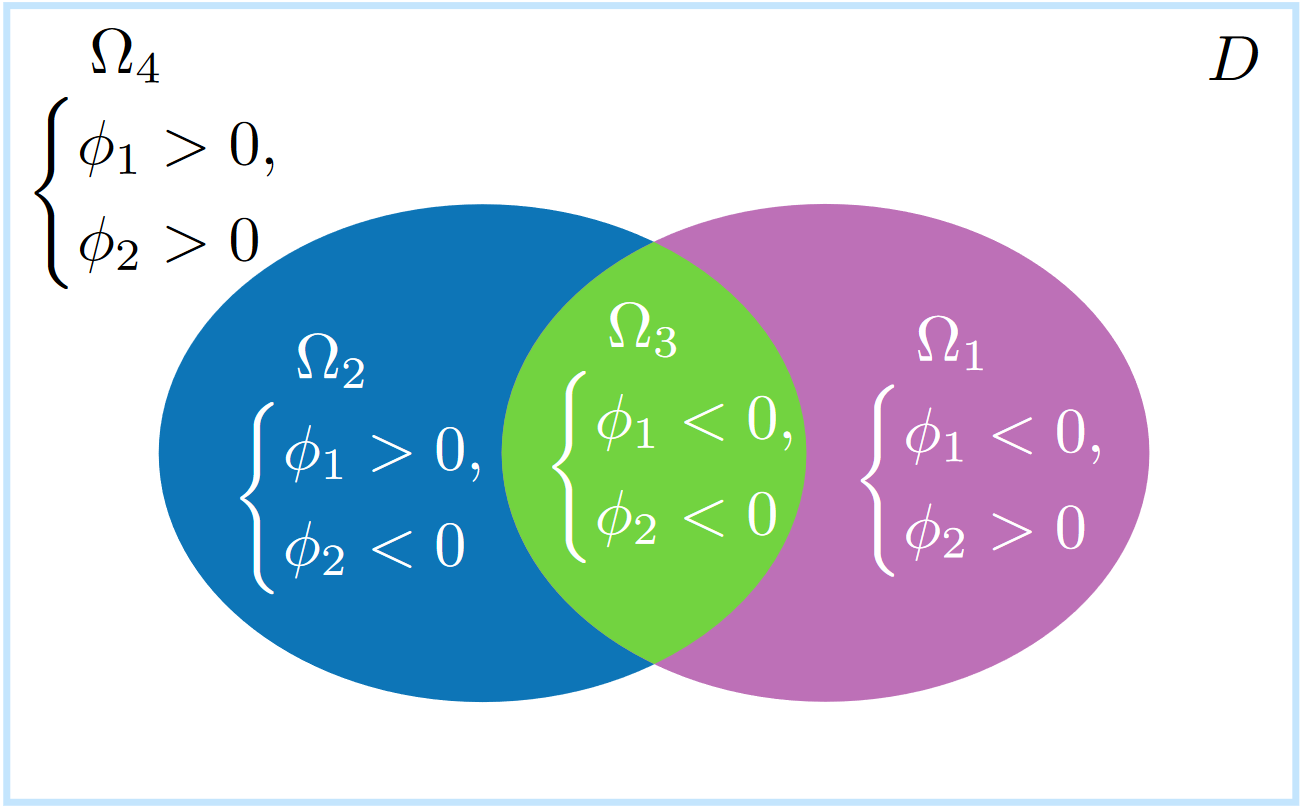}
        \caption{}
        \label{fig:Fig 1c}
    \end{subfigure}
    \caption{An illustration of colour level sets with two level set functions and four phases.
    (a) and (b) show the domain $\mathcal{D}_1$ and $\mathcal{D}_2$ represented via the level set function $\phi_1$ and $\phi_2$ respectively. (c) shows the colour representation of $\Omega_1,$ $\Omega_2$, $\Omega_3$, and $\Omega_4$ for different signs of $\phi_1$ and $\phi_2$.}
    \label{fig:Fig 1}
\end{figure}
Figure \ref{fig:Fig 1} shows an illustration of this case. 

In a similar way to Equation \ref{eqn: smooth ersatz} we can interpolate the value of the stiffness tensor between these domains via
\begin{equation}\label{eqn: multiphase interp}
    \begin{aligned}
        C_{ijkl}(d_{\mathcal{D}_1},d_{\mathcal{D}_2})&\\&\hspace{-2cm}=C_{ijkl,1}(1-H_{\eta}(d_{\mathcal{D}_1}))H_{\eta}(d_{\mathcal{D}_2}) \\&\hspace{-2cm}~+ C_{ijkl,2} H_{\eta}(d_{\mathcal{D}_1})(1-H_{\eta}(d_{\mathcal{D}_2})) \\&\hspace{-2cm}~+ C_{ijkl,3} (1-H_{\eta}(d_{\mathcal{D}_1}))(1-H_{\eta}(d_{\mathcal{D}_2})) \\&\hspace{-2cm}~+ C_{ijkl,4} H_{\eta}(d_{\mathcal{D}_1})H_{\eta}(d_{\mathcal{D}_2}),
    \end{aligned}
\end{equation}
where $C_{ijkl,\alpha}$ is the elasticity tensor for the phase occupying $\Omega_{\alpha}$. In this multi-phase case we have replaced the level set functions $\phi_1$ and $\phi_2$ with $d_{\mathcal{D}_1}$ and $d_{\mathcal{D}_2}$ denoting their respective signed distance functions. This change facilitates shape differentiation. Unlike the situation discussed by \cite{10.1007/s00158-016-1453-y_2016}, periodicity of $D$ ensures that this replacement is valid provided the level set functions are reinitialised often. Additional care should then be taken regarding the calculation of certain quantities and their shape derivatives. Namely, the homogenised stiffness tensor (Eq. \ref{eqn: le hom}) becomes
\begin{equation}
\begin{aligned}
    &\bar{C}_{ijkl}(\mathcal{D}_1,\mathcal{D}_2)&\\\hspace{-1.9cm}&= \int_D C_{pqrs}(d_{\mathcal{D}_1},d_{\mathcal{D}_2})({\varepsilon}_{pq}(\tilde{\boldsymbol{u}}^{(ij)})+\bar\varepsilon^{(ij)}_{pq})\bar\varepsilon_{rs}^{(kl)}~\mathrm{d}\Omega.
\end{aligned}
    \label{eqn: le hom multiphase}
\end{equation}
The volume of $\Omega_1$ is given by
\begin{equation}\label{eqn: volume 1}
    \operatorname{Vol}_{\Omega_1}(\mathcal{D}_1,\mathcal{D}_2)=\int_D (1-H_{\eta}(d_{\mathcal{D}_1}))H_{\eta}(d_{\mathcal{D}_2})~\mathrm{d}\Omega.
\end{equation}
Similar expressions are used for $\Omega_2$, $\Omega_3$ and $\Omega_4$. 

Integration over the whole cell $D$ in Equations \ref{eqn: le hom multiphase} and \ref{eqn: volume 1} with dependence on the signed distance functions differs from the single-phase case where integration is over the domain $\Omega$ occupied by the solid phase (see Eq.~\ref{eqn: le hom}). 
Shape differentiability of the signed distance function and a coarea formula can be used in this multi-phase case to derive the shape derivative. We utilise the \say{approximate} formula discussed by \cite{10.1051/cocv/2013076_2014}. This assumes that the Heaviside smoothing parameter $\eta$ is small and that the principal curvatures of $\partial\Omega$ vanish.
\begin{lemma}
    The approximate shape derivatives of Equations \ref{eqn: le hom multiphase} and \ref{eqn: volume 1} under variation of the domain $\mathcal{D}_1$ by $\boldsymbol{\theta}_1$ are
\begin{equation}
\begin{aligned}
    &\bar{C}_{ijkl}^{\prime}(\mathcal{D}_1,\mathcal{D}_2)(\boldsymbol{\theta}_1)\\&\quad\approx-\int_{\partial\mathcal{D}_1}\frac{\partial C_{pqrs}}{\partial g}({\varepsilon}_{pq}(\tilde{\boldsymbol{u}}^{(ij)})+\bar{\varepsilon}_{pq}^{(ij)})\\&\quad\quad\quad\quad\quad
    \times({\varepsilon}_{rs}(\tilde{\boldsymbol{u}}^{(kl)})+\bar{\varepsilon}_{rs}^{(kl)})~\boldsymbol{\theta}_1\cdot\boldsymbol{n}~\mathrm{d}{\Gamma},
\end{aligned}
\end{equation}
where $g=H_\eta(d_{\mathcal{D}_1})$, and 
\begin{equation}
\begin{aligned}
    \operatorname{Vol}^{\prime}_{\Omega_1}(\mathcal{D}_1,\mathcal{D}_2)(\boldsymbol{\theta}_1)\approx\int_{\partial\mathcal{D}_1} H_{\eta}(d_{\mathcal{D}_2})~\boldsymbol{\theta}_1\cdot\boldsymbol{n}~\mathrm{d}\Gamma.
\end{aligned}
\end{equation}
Analogous expressions follow for $\boldsymbol{\theta}_2$ and $\Omega_2$, $\Omega_3$ and $\Omega_4$.  
\end{lemma}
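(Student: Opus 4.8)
The plan is to adapt the single-phase argument (whose details appear in Appendix~\ref{secA1}) to the multi-phase setting, where the two new features are the dependence of the interpolated stiffness on the signed distance functions and the fact that both functionals are now integrated over the whole cell $D$. I would first dispose of the implicit dependence on the state through C\'ea's method. Writing $\bar{C}_{ijkl}$ in its symmetric ``energy'' form by testing Weak Form~\ref{wf: LE hom} against $\tilde{\boldsymbol{u}}^{(kl)}$ (which, using the major symmetry $C_{pqrs}=C_{rspq}$ and that the state equation is satisfied, shows the product of the fluctuation strains integrates to zero), the homogenisation problem is self-adjoint, so the contributions arising from the shape derivatives of the fluctuation fields $\tilde{\boldsymbol{u}}^{(ij)}$ and $\tilde{\boldsymbol{u}}^{(kl)}$ cancel. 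This leaves only the explicit dependence of the integrand on the domain $\mathcal{D}_1$, which for $\bar{C}_{ijkl}$ enters through $C_{pqrs}(d_{\mathcal{D}_1},d_{\mathcal{D}_2})$ and for $\operatorname{Vol}_{\Omega_1}$ through the factor $1-H_\eta(d_{\mathcal{D}_1})$.

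The heart of the computation is then differentiating these explicit terms with respect to the shape of $\mathcal{D}_1$. I would invoke the shape differentiability of the signed distance function: under variation of $\mathcal{D}_1$ by $\boldsymbol{\theta}_1$, its shape derivative is $d_{\mathcal{D}_1}'=-\boldsymbol{\theta}_1\cdot\boldsymbol{n}$, evaluated at the nearest-point projection onto $\partial\mathcal{D}_1$. Since the stiffness depends on $d_{\mathcal{D}_1}$ only through $g=H_\eta(d_{\mathcal{D}_1})$, the chain rule produces a factor $\frac{\partial C_{pqrs}}{\partial g}\,H_\eta'(d_{\mathcal{D}_1})\,d_{\mathcal{D}_1}'$ inside the volume integral, while the volume functional produces the analogous factor $-H_\eta'(d_{\mathcal{D}_1})\,d_{\mathcal{D}_1}'\,H_\eta(d_{\mathcal{D}_2})$. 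Because $H_\eta'$ is supported in a tubular neighbourhood of $\partial\mathcal{D}_1$ and $\lvert\nabla d_{\mathcal{D}_1}\rvert=1$, I would apply the coarea formula to foliate $D$ by the level sets of $d_{\mathcal{D}_1}$ and, using that $\eta$ is small and the principal curvatures of $\partial\Omega$ are negligible so that neighbouring level sets carry the same surface measure and $\int H_\eta'=1$, collapse each volume integral onto a single boundary integral over $\partial\mathcal{D}_1$. Substituting $d_{\mathcal{D}_1}'=-\boldsymbol{\theta}_1\cdot\boldsymbol{n}$ then gives the stated expressions, with the lone minus sign surviving in $\bar{C}_{ijkl}'$ and the two minus signs cancelling to yield a positive $\operatorname{Vol}_{\Omega_1}'$. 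The derivations for $\boldsymbol{\theta}_2$ and for $\Omega_2,\Omega_3,\Omega_4$ are identical up to relabelling.

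I expect the main obstacle to be the justification of the coarea localisation together with the explicit formula for $d_{\mathcal{D}_1}'$, rather than the self-adjoint cancellation, which is routine given the single-phase proof. Both ingredients are precisely the ``approximate'' formula of \cite{10.1051/cocv/2013076_2014} and hold under the hypotheses stated in the lemma ($\eta$ small, vanishing curvatures); I would therefore cite that work for the shape regularity of the signed distance function and the validity of the localisation, and confine my own effort to correctly tracking the chain-rule factors and the resulting signs.
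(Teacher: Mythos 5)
Your proposal is correct and follows essentially the same route as the paper's proof: C\'ea's method with the self-adjoint structure eliminating the state-derivative terms, shape differentiability of the signed distance function with $d_{\mathcal{D}_1}'=-\boldsymbol{\theta}_1\cdot\boldsymbol{n}$ at the nearest-point projection, the chain rule through $g=H_\eta(d_{\mathcal{D}_1})$, and the Jacobian-free coarea formula with the small-$\eta$, vanishing-curvature approximation collapsing the ray integrals via $\int H_\eta'\,\mathrm{d}z=1$. The only cosmetic difference is that the paper spells out explicitly the approximation of the strains and $d_{\mathcal{D}_2}$ by their values on $\partial\mathcal{D}_1$ along each ray, which you subsume into the localisation step.
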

\begin{proof}
See Appendix \ref{secA2}.
\end{proof}

We note that comparisons between the \say{true} formula, \say{Jacobian-free} formula (zero principal curvatures), and \say{approximate} formula have been discussed for compliance elsewhere in the literature \citep{10.1051/cocv/2013076_2014,10.1007/s00158-016-1453-y_2016}. It suffices to mention that \cite{10.1007/s00158-016-1453-y_2016} found that the \say{approximate} formula does not capture the distortion that arises due to the ray integration and approximation of the principal curvatures in the \say{true} formula. 

\section{Hilbertian projection method}\label{sec: HPM}
The Hilbertian framework yields a descent direction $\boldsymbol{\theta}=\tau g_\Omega\boldsymbol{n}$ for unconstrained optimisation problems. However, for constrained optimisation problems such as
\begin{equation}\label{eqn:optim prob}
          \begin{aligned}
            \underset{\Omega\in\mathcal{U}_{\mathrm{ad}}}{\min}~
              &J(\Omega)\\
            \text{s.t.}~~ & C_i(\Omega)=0,~i=1,\dots,N,\\
            &a(\boldsymbol{u},\boldsymbol{v})=l(\boldsymbol{v}),~\forall \boldsymbol{v}\in V,
          \end{aligned}
\end{equation}
the choice of $\boldsymbol{\theta}$ is more difficult.

In the literature a variety of optimisation methods deal with this problem but few of these take advantage of the Hilbertian framework. \cite{10.1016/bs.hna.2020.10.004_978-0-444-64305-6_2021} recently presented a sequential linear programming (SLP) method in the Hilbertian framework. The projection method uses orthogonal projections to evolve the design in a direction that aims for best possible improvement of the objective functional while improving the constraint functionals \citep{10.1016/j.ijsolstr.2008.02.025_2008}. In the following we present a Hilbertian extension of the projection method for constrained topology optimisation. 

\subsection{Preliminaries}
We proceed by first solving the following set of scalar Hilbertian extension-regularisation problems over $H$ for an objective functional $J(\Omega)$ and constraint functionals $C_i(\Omega)$:

\textit{Find $g_\Omega\in H$ and $\mu_{\Omega i}\in H$ such that 
\begin{align}
    \langle g_\Omega,v\rangle_H&=-J^{\prime}(\Omega)(v\boldsymbol{n}),~\forall v\in H,\text{ and}\label{eqn: hilbertian obj}\\
    \langle\mu_{\Omega i},v\rangle_H&=-C_i^{\prime}(\Omega)(v\boldsymbol{n}),~\forall v\in H,  \label{eqn: hilbertian consts}
\end{align}
for all $i=1,\dots,N,$ with inner product $\langle\cdot,\cdot\rangle_H$ and norm $\lVert\cdot\rVert_H=\sqrt{\langle\cdot,\cdot\rangle_H}$.}
 
Next we use Gram-Schmidt orthogonalisation to remove linearly dependent constraints from the set $\{\mu_{\Omega i}\}_{i=1}^{N}$ to obtain the set $\{\mu_{\Omega p}\}_{p=1}^{\bar{N}}$, where $\bar{N}\leq N$. We use $\{\bar{\mu}_{\Omega p}\}$ to denote the corresponding orthogonal basis  that spans the set $C\subset H$ of extended constraint shape sensitivities. The basis $\{\bar{\mu}_{\Omega p}\}$ can be used to construct an orthogonal projection operator $P_{C^\perp}$ that projects the shape sensitivity $g_\Omega$ onto the set $C^\perp$ perpendicular to the set of extended constraint shape sensitivities. We define this operator as
\begin{equation}\label{eqn: proj operator}
    P_{C^\perp}{g_\Omega} = g_\Omega - \sum_{p=1}^{\bar{N}}\frac{\langle\bar{\mu}_{\Omega p},g_\Omega\rangle_H}{\lVert\bar{\mu}_{\Omega p}\rVert_H^2}\bar{\mu}_{\Omega p}.
\end{equation}
Then, by construction, evolving the level set function using normal velocity $P_{C^\perp}{g_\Omega}$ would to first order improve the objective functional $J(\Omega)$ while leaving the constraint functionals $C_i(\Omega)$ unchanged. On the other hand, the set of basis functions $\{\bar{\mu}_{\Omega p}\}$ describes directions that to first order improve the constraint functionals $C_i(\Omega)$.

\subsection{Formulation}
The Hilbertian projection method can then be formulated as follows: For some rate parameter $\lambda\in\mathbb{R}$, suppose we choose $v_\Omega\in H$ in the deformation field $\boldsymbol{\theta}=\tau v_\Omega \boldsymbol{n}$ so that $J(\Omega)$ and $C_i(\Omega)$ decrease via \citep{10.1016/j.ijsolstr.2008.02.025_2008}
\begin{equation}\label{eqn: proj assumption}
        \left\{\begin{aligned}
    &J^\prime(\Omega)(v_\Omega \boldsymbol{n}) = \text{ min possible},\\
    &C_i^\prime(\Omega)(v_\Omega \boldsymbol{n}) = -\lambda C_i.
\end{aligned}\right.
\end{equation}
It is important to note that we purposefully pose the former requirement as \say{min possible} so that the objective functional may increase when required to improve constraints.
\added[{[R2-3]}]{Furthermore, linearly dependent constraints in the optimisation problem need to be consistent to ensure that the second line of Equation~\ref{eqn: proj assumption} is well posed. Specifying constraints that have linearly dependent shape sensitivities but for which the directions of improvement are in contradiction would violate this requirement.}

In the Hilbertian framework, Equation \ref{eqn: proj assumption} may be rewritten as 
\begin{equation}\label{eqn: hilb. proj assumption}
    \left\{\begin{aligned}
            &\langle g_\Omega,v_\Omega\rangle_H = \text{ max possible},\\
            &\langle \mu_{\Omega i},v_\Omega\rangle_H = \lambda C_i.
        \end{aligned}\right.
\end{equation}
Note that the change in sign comes from the application of Equations \ref{eqn: hilbertian obj} and \ref{eqn: hilbertian consts}. We choose the following linear combination for $v_\Omega$:
\begin{equation}\label{eqn: proj eqn}
    v_{\Omega}=\sqrt{1-\sum_{p=1}^{\bar{N}} \alpha_p^2} \frac{P_{C^{\perp}} g_{\Omega}}{\|P_{C^{\perp}} g_{\Omega}\|_H}+\sum_{p=1}^{\bar{N}} \alpha_p \frac{\bar{\mu}_{\Omega p}}{\|\bar{\mu}_{\Omega p}\|_H},
\end{equation}
where $\alpha_p\in\mathbb{R}$ are \replaced[{[R2-3]}]{determined}{arbitrary at this stage. To first order the first term of Equation \ref{eqn: proj eqn} improves the objective while leaving the constraints unchanged due to orthogonality, while the second term improves the constraints.
The square root term is included to facilitate a balance between improving the objective and constraints. We can then determine $\alpha_p$} using $\langle \mu_{\Omega p},v_\Omega\rangle_H = \lambda C_p$ for $p=1,\dots,\bar{N}$. This generates a lower-triangular linear system of the form 
\begin{equation}
    \lambda C_p = \sum_{l=1}^{p-1} \alpha_l\frac{\langle\bar{\mu}_{\Omega l}, \mu_{\Omega p}\rangle_H}{\|\bar{\mu}_{\Omega l}\|_H} + \alpha_p\|\bar{\mu}_{\Omega p}\|_H,
\end{equation}
which can easily be solved via forward substitution \citep{10.1016/j.ijsolstr.2008.02.025_2008}.

\added[{[R2-3]}]{To first order the first term of Equation \ref{eqn: proj eqn} improves the objective while leaving the constraints unchanged due to orthogonality, while the second term improves the constraints with extended shape sensitivities that contribute to the basis $\{\bar{\mu}_{\Omega p}\}$. In the numerical examples we have observed satisfaction of all constraints at convergence of the optimisation algorithm, including those that have linearly dependent shape sensitivities. The square root term of Equation \ref{eqn: proj eqn} is included to facilitate a balance between improving the objective and constraints. }

\subsection{Parameters}\label{sec: params}
As discussed by \cite{10.1016/j.ijsolstr.2008.02.025_2008}, the rate parameter $\lambda$ should be chosen to ensure {$1-\sum_{p=1}^{\bar{N}} \alpha_p^2 \geq 0$} and $\sum_{p=1}^{\bar{N}} \alpha_p^2 \geq \alpha_{\textrm{min}}^2$. The new parameter $\alpha_{\textrm{min}}$ then controls the balance between improving the objective or constraints. For example, $\alpha_{\mathrm{min}}=1$ ignores the objective function in Equation \ref{eqn:optim prob} and instead solves a constraint satisfaction problem. As a result, the method only has a single parameter $\alpha_{\textrm{min}}$ while $\lambda$ is dictated by the inequalities above. In general we find that $\alpha_{\textrm{min}}$ does not require fine tuning and unless otherwise stated we choose $\alpha^2_{\mathrm{min}}=0.1$.\\

\subsection{Comparison to null space methods}\label{sec: compar null}

Our formulation is similar to null space methods recently developed by \cite{10.3934/dcdsb.2019249} and \cite{10.1051/cocv/2020015_2020}, both of which present a similar formulation. In the following we discuss some differences between our Hilbertian projection method and the null space method proposed by \cite{10.1051/cocv/2020015_2020}.

Most notably, our formulation makes use of an orthogonal basis for the set of extended constraint shape sensitivities. This avoids a possibly expensive matrix inversion that appears in the algorithm presented by \citep{10.1051/cocv/2020015_2020}. Our use of the orthogonal basis also avoids reliance on the ‘Linear Independence Constraint Qualification’ (LICQ) condition \cite[Sec. 2.1,][]{10.1051/cocv/2020015_2020}. Our method therefore naturally handles linearly dependent constraint shape sensitivities. Such dependencies often appear in microstructure optimisation problems when symmetries are imposed on the effective material properties (e.g., Secs.~\ref{subsec: eg problems - Bmod iso} and \ref{subsec: eg problems - multi-phase results} below). Multi-phase level set-based topology optimisation via the colour level set method \citep{10.1016/j.cma.2003.10.008_2004,10.1051/cocv/2013076_2014} can also give rise to such linear dependency (e.g., Sec.~\ref{subsec: eg problems - multi-phase results} below). The ability of the Hilbertian projection method to naturally handle these situations gives the user more freedom in how topology optimisation problems are posed and avoids additional special treatment of linearly dependent constraint sensitivities.

\replaced[{[R2-1 \& R2-2]}]{For equality constrained problems when LICQ is satisfied, both the null space method \citep{10.1051/cocv/2020015_2020} and our Hilbertian project method give equivalent directions for improvement of the objective and violated constraints, up to coefficients $\alpha_p$ and constant $\lambda$. H}{Both the Hilbertian projection method and the aforementioned null space method (Feppon et al, 2020) result in exponential improvement of violated constraints, h}owever\added{,} the method of attaining this improvement is quite different. In this work, the second term of Equation \ref{eqn: proj eqn} is a linear combination of the orthogonal basis of the set of extended constraint shape sensitivities. The coefficients $\alpha_p$ are chosen to improve constraints exponentially, as per the second requirement in Equation \ref{eqn: hilb. proj assumption}. The null space instead uses a Gauss-Newton direction for ensuring exponential decay of violated constraints \cite[Lemma 2.5 \& Prop. 2.6,][]{10.1051/cocv/2020015_2020}. This again relies on LICQ and possibly expensive matrix inversion discussed above.

\deleted[{[R2-4]}]{Another difference presents in the choice of method parameter/s. The approach described by 
Feppon et al (2020) uses two parameters that dictate the size of the contributions from first and second term in their descent direction. In contrast, our formulation is a single-parameter method where $\alpha_{\mathrm{min}}$ dictates the balance between the first and second term in the descent direction (Eqn. \ref{eqn: proj eqn}). A benefit of the latter is the clear interpretation of $\alpha_\mathrm{min}$ discussed in Section \ref{sec: params}.}

Finally, unlike the null space methods proposed by \cite{10.3934/dcdsb.2019249} and \cite{10.1051/cocv/2020015_2020}, the Hilbertian projection method as formulated above is unable to handle inequality constraints. Such an extension could be considered in future using slack variables or by adopting the procedure used by either \cite{10.3934/dcdsb.2019249} or \cite{10.1051/cocv/2020015_2020}. Interestingly, \added{in terms of the total length covered to reach the optimum,} \cite{10.1051/cocv/2020015_2020} \replaced{found}{find} that their dual quadratic programming method for handling inequality constraints \replaced{yielded}{yields} equivalent performance when compared to the method of slack variables. \added{However, using slack variables introduces additional computational cost and possibly further parameter tuning \citep{10.1051/cocv/2020015_2020}. For our implementation this additional cost should be small owing to the use of orthogonalisation. As such the slack variable method would be an appropriate first recourse for implementing inequality constraints within the Hilbertian projection method. This would be similar to the approach taken by \cite{doi:10.1080/01630560008816971}.}
\deleted[{[R2-6]}]{This combined with the small cost of using slack variables in our orthogonalisation framework suggests that the slack variable method would be an appropriate first recourse for implementing inequality constraints within the Hilbertian projection method.}

\section{Numerical implementation}\label{sec: numerical impl}
In the following we describe the numerical implementation of our topology optimisation algorithm. We first discuss the resolution of state equations and Hamilton-Jacobi type equations followed by an overview of the optimisation algorithm. We finish with a brief discussion of the Hilbertian SLP method which is compared to our presented Hilbertian projection method.

\subsection{Resolving state and Hamilton-Jacobi-type equations}
To solve the state equations and the Hilbertian extension-regularisation problems we use the finite element package \textit{Gridap} \citep{Badia2020,Verdugo2022} in the programming language \textit{Julia}. In particular, we discretise the periodic domain $D\subset\mathbb{R}^d$ into $n^d$ linear quadrilateral ($d$=2) or hexahedral ($d$=3) elements with element width $\Delta x$ and discretise the level set function at the nodes of the triangulation. To reduce computational cost when solving the state equations, we remove any elements that are completely void phase and leave a strip of ersatz material near the phase interface. The resulting linear systems for the state equations and Hilbertian extension-regularisation problems are then solved using a direct method in 2D or a GPU-based Jacobi pre-conditioned conjugate gradient method in 3D.

For the Hamilton-Jacobi evolution equation and signed distance reinitialisation equation (Eqs. \ref{eqn: HJ} and \ref{eqn: reinit}) we use standard first order Godunov upwind finite difference schemes \citep{10.1006/jcph.1999.6345_1999,10.1016/j.jcp.2003.09.032_2004,978-0-387-22746-7_2006} that have been implemented on GPUs using \textit{CUDA.jl} \citep{besard2018juliagpu}. For the Hamilton-Jacobi evolution equation we use $\lfloor n/10\rfloor$ or $\lfloor n/3\rfloor$ number of time steps in two dimensions or three dimensions, respectively. We are more conservative in three dimensions because we have less elements along each axial direction. For the reinitialisation equation we iterate until reaching a stopping condition
\begin{equation}
    \lVert\phi^{q}-\phi^{q-1}\rVert_\infty < 5\times10^{-5},
\end{equation}
where $q$ is the iteration number. In addition, for the sign function $S$ we use the common approximation 
\begin{equation}\label{eqn: approx sign}
    S(\phi)=\frac{\phi}{\sqrt{\phi^2+\lvert\nabla\phi\rvert^2\Delta x^2}},
\end{equation}
that applies on a Cartesian grid with square elements of side length $\Delta x$ \citep{978-0-387-22746-7_2006}. 

\subsection{Algorithm overview}\label{sec: algo overview}
In Algorithm \ref{alg:hpm} we present our optimisation algorithm that is based on the theory discussed in Sections \ref{sec: math background} and \ref{sec: HPM}. Algorithm \ref{alg:hpm} is similar to Algorithm 5 presented by \cite{10.1016/bs.hna.2020.10.004_978-0-444-64305-6_2021}, with the addition of a line search method for determining the Courant-Friedrichs-Lewy (CFL) coefficient \added{$\gamma$} for solving the Hamilton-Jacobi evolution equation \cite[e.g.,][]{10.1007/s10957-021-01928-6_2021} \added[{[R2-5]}]{with time step \citep{978-0-387-22746-7_2006}}
\begin{equation}
    \added{\Delta t = \frac{\gamma\Delta x}{\lVert{v_\Omega}\rVert_{\infty}}.}
\end{equation}
\added{Note that we omit the indices on $\gamma$ that appear in Algorithm \ref{alg:hpm} for sake of clarity.}
\deleted{.}In general, the line search method helps to remove oscillations in the optimisation history and improve convergence. For the stopping criterion we require that the current objective value compared to the previous five is stationary and that the constraints are satisfied within specified tolerances.

The Hilbertian projection method is implemented using the package \emph{DoubleFloats.jl} that gives a machine epsilon of roughly $5\times 10^{-32}$. This prevents accumulation of round-off error when generating the projection operator that can affect the optimisation history. All other computations are completed in standard double precision.

{\begin{algorithm}[t]
\smaller
\caption{Optimisation Algorithm}\label{alg:hpm}
\begin{algorithmic}[1]
\Require Initialise $\Omega^0$ inside a computational domain $D$ with mesh $\mathcal{T}$ and a level set function $\phi^0$. 
\State Find the initial solution $\boldsymbol{u}^{(ij)}$ to the homogenisation problem for each unique $\bar{\varepsilon}^{(ij)}$.
\For{$q = 1,\dots,q_\mathrm{max}$}
\State  \parbox[t]{190pt}{Calculate the shape sensitivity of the objective $J(\Omega^{q-1})$ and constraints $C_i(\Omega^{q-1})$.}
\State \parbox[t]{190pt}{Solve scalar Hilbertian extension-regularisation problems for the objective and constraints with length scale $\beta$.}
\State \parbox[t]{190pt}{Apply the Hilbertian projection method to find $v_\Omega$.}
\For{$\replaced{k}{m} = 1,\dots,\replaced{k}{m}_\mathrm{max}$}
    \State \parbox[t]{180pt}{Solve H-J evolution equation with CFL coefficient $\gamma^{q-1,k}$ to find new domain $\bar{\Omega}^{k}$ with associated level set function $\bar{\phi}^{k}$.}
    \State Solve reinitialisation equation.
    \State \parbox[t]{180pt}{Solve state equations for linear elastic homogenisation and calculate the new objective $J_\mathrm{new}$.}
    \If{\parbox[t]{165pt}{$J_\mathrm{new} < J(\Omega^{q-1}) + \xi\lvert J(\Omega^{q-1})\rvert$ or $\gamma^{q-1,k}<\gamma_{\mathrm{min}}$ \textbf{then}}}
        \State Increase the CFL coefficient:
        $$\gamma^{q,k} = \min(\delta_{\mathrm{inc}}\gamma^{q-1,k},\gamma_{\mathrm{max}}).$$
        \State \parbox[t]{165pt}{Accept the new iteration with $\Omega^{q}=\bar{\Omega}^{k}$ and $\phi^{q}=\bar{\phi}^{k}.$}
        \State Break inner loop.
    \Else
        \State \parbox[t]{165pt}{Decrease the CFL coefficient:
        $$\gamma^{q-1,k+1} = \max(\delta_{\mathrm{dec}}\gamma^{q-1,k},\gamma_{\mathrm{min}}).$$}
        \State \parbox[t]{165pt}{Reject the new iteration and continue loop.}
    \EndIf
\EndFor
\If{\parbox[t]{185pt}{$\lvert J(\Omega^{q})-J(\Omega^{q-j})\rvert\leq \epsilon_1\lvert J(\Omega^{q})\rvert, \forall j = 1,\dots,j_{\mathrm{max}}$ and $\lvert C_i(\Omega^{q})\rvert <\epsilon_2$ $\forall i$ \textbf{then}}}
\State \textbf{return} $\Omega^{q}$ and $\phi^{q}$.
\EndIf
\EndFor

\end{algorithmic}
\end{algorithm}}

Table \ref{tab:parameters} gives the parameter values used for all the optimisation examples unless otherwise stated in Section \ref{sec: eg problems}. 

\begin{table}[t]
\centering
\caption{Parameter values used for optimisation examples. Note: Any variation in the parameters from these values will be specifically stated in the relevant example problem section.}
\label{tab:parameters}
\setlength{\tabcolsep}{0.5em} 
\begin{tabular}{l|l|l}
Parameter                         & Value        & Type\\ \hline
$q_{\mathrm{max}}$                                       & $1000$        & Max iterations\\
$\replaced{k}{m}_{\mathrm{max}}$                                       & $10$        & Max line search iterations\\
$\Delta x$                                       & $1/n$        & Mesh spacing\\
$\varepsilon_{\mathrm{void}}$                                    & $0.001$ & Ersatz material coeff.\\
$\eta$                                           & $1.5\Delta x$ & Heaviside smoothing\\
$\beta$                                          & 4$\Delta x$  & Hilbertian ext.-reg.\\
$\alpha_{\mathrm{min}}^2$                        & 0.1          & Hilbertian proj. meth.\\
$\lambda$                                        & 0.5          & Hilbertian proj. meth.\\
$\gamma_{\mathrm{min}}$ & 0.001   & H-J equation\\
$\gamma_{\mathrm{max}}$ & 0.1  & H-J equation\\
$\gamma_{\mathrm{reinit}}$                       & 0.1          & Reinit. equation\\
$\xi$                                            & 0.005        & Line search\\
$\delta_{\mathrm{inc}}$, $\delta_{\mathrm{dec}}$ & 1.1, 0.7     & Line search\\
$\epsilon_1$, $\epsilon_2$                       & 0.01 , 0.0001 & Stopping criteria\\
$j_{\mathrm{max}}$ & 5 & Stopping criteria
\end{tabular}
\end{table}

\subsection{Comparison to sequential linear programming (SLP)}
We compare the Hilbertian projection method to the Hilbertian SLP method presented by \cite{10.1016/bs.hna.2020.10.004_978-0-444-64305-6_2021} (Sec. 5.3.2). We make two adjustments to the method. Firstly, to match our formulation we replace the inequality constraints with equality constraints. In addition, we change the trust region constraints for the constraint functionals to be
\begin{equation}
    \lvert\lambda_i\rvert\leq\frac{\Delta x}{2\|\boldsymbol{\theta}_i\|_H},
\end{equation}
for $i=1,\dots,N$. For several two-dimensional problems we find that this choice promotes convergence and better optimisation results. However, as we will discuss later, choosing trust region constraints is not straightforward for our example optimisation problems.

We implement Hilbertian SLP by adjusting line 5 of Algorithm \ref{alg:hpm} accordingly. To solve the resulting linearised optimisation problem we use the Julia packages \textit{JuMP.jl} \citep{DunningHuchetteLubin2017} and \textit{Ipopt.jl} \citep{SLP2006}.

\section{Example problems}\label{sec: eg problems}
In the following we give the optimisation results for several example problems that have been solved with both the Hilbertian projection method and Hilbertian SLP method.

\subsection{Example 1: Maximum bulk modulus}\label{subsec: eg problems - Bmod}
In this example we consider a bounding domain $D=[0,1]^d$ that contains a solid phase and void phase. The solid phase is constructed from an isotropic medium with $E=1$ and $\nu=0.3$. Subject to a volume constraint $\operatorname{Vol}(\Omega)=1/2$, we maximise the effective bulk modulus $\bar{\kappa}(\Omega)$ of the material. The bulk modulus is a measure of stiffness to volumetric strain given by
\begin{equation}
    \bar{\kappa} = \frac{1}{4}(\bar{C}_{1111}+\bar{C}_{2222}+2\bar{C}_{1122})
\end{equation}
in two dimensions, or 
\begin{equation}
    \begin{aligned}
        \bar{\kappa} = \frac{1}{9}(&\bar{C}_{1111}+\bar{C}_{2222}+\bar{C}_{3333}\\&+2(\bar{C}_{1122}+\bar{C}_{1133}+\bar{C}_{2233}))
    \end{aligned}
\end{equation}
in three dimensions. In other words, we seek to solve the optimisation problem:
\begin{equation}
    \begin{aligned}
            \underset{\Omega\in\mathcal{U}_{\mathrm{ad}}}{\min}~
              &-\bar{\kappa}(\Omega)\\
            \text{s.t.}~~ & \mathrm{Vol}(\Omega)=1/2,\\
            &a(\boldsymbol{u},\boldsymbol{v})=l(\boldsymbol{v}),~\forall \boldsymbol{v}\in V.
          \end{aligned}
\end{equation}
The last line represents the satisfaction of the state equations. 

In two dimensions we use a periodic starting structure with four equally spaced holes. For three dimensions the initial boundary between void and solid material is given by a Schwarz P minimal surface.  It is well-known that in two dimensions hole nucleation is not possible under Hamilton-Jacobi evolution \citep[e.g.,][]{10.1016/j.jcp.2003.09.032_2004}. For this reason we initialise the two-dimensional optimisation problems with more holes than required. Topological derivatives could be incorporated to rectify this, but this is outside of the scope of the current paper.  We also note that different starting structures could be used provided they are periodic and have non-zero stiffness.

\begin{figure*}[p]
    \centering
    \begin{minipage}{.2\linewidth}
    \begin{subfigure}{\linewidth}
        \includegraphics[width=\textwidth]{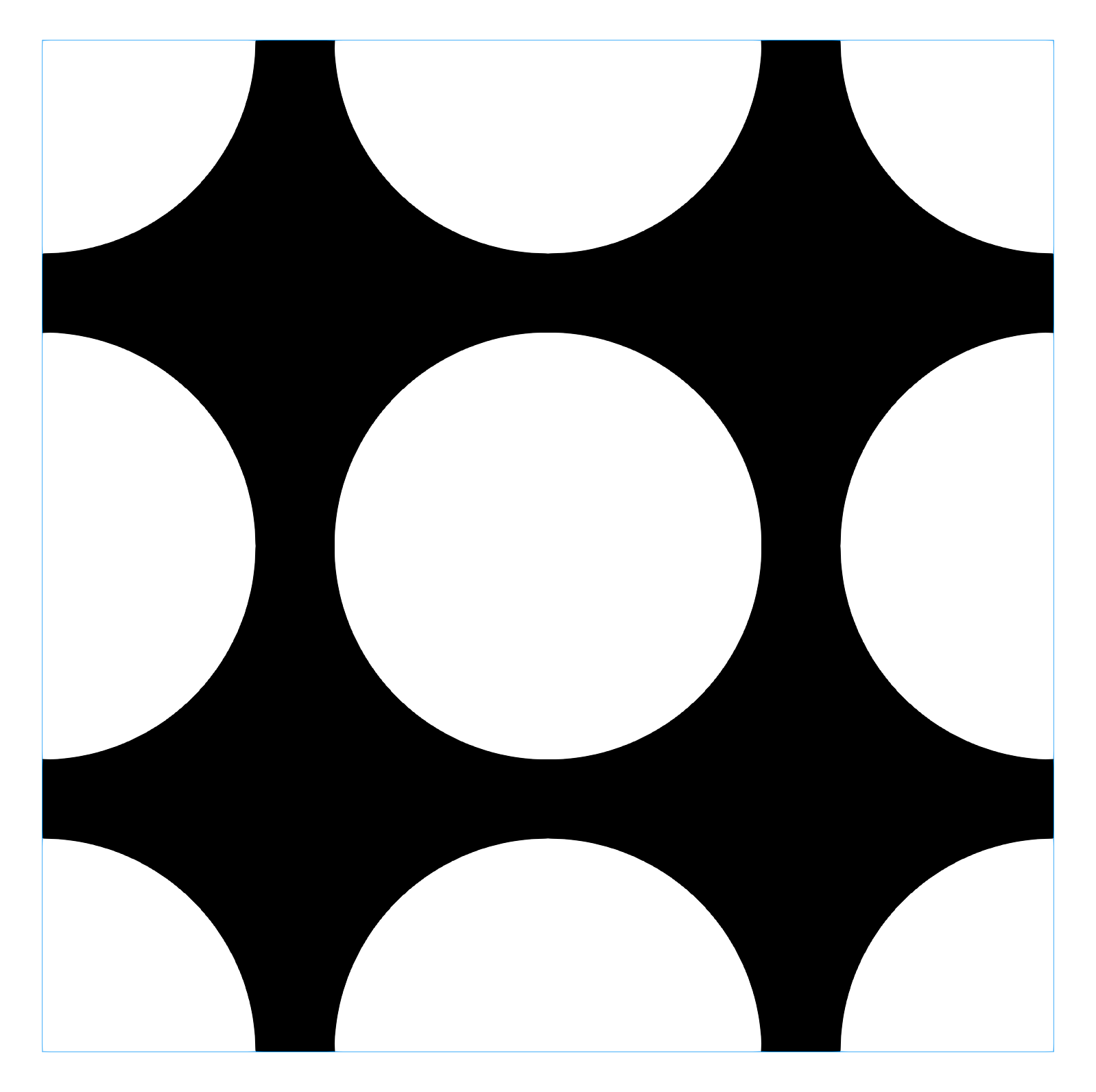}
        \caption{}
        \label{fig:Fig 2a}
    \end{subfigure}
    \end{minipage}
    \Huge{$\mathbf{\rightarrow}$}
    \begin{minipage}{.72\linewidth}
    \centering
    \begin{subfigure}{0.33\linewidth}
        \includegraphics[width=\textwidth]{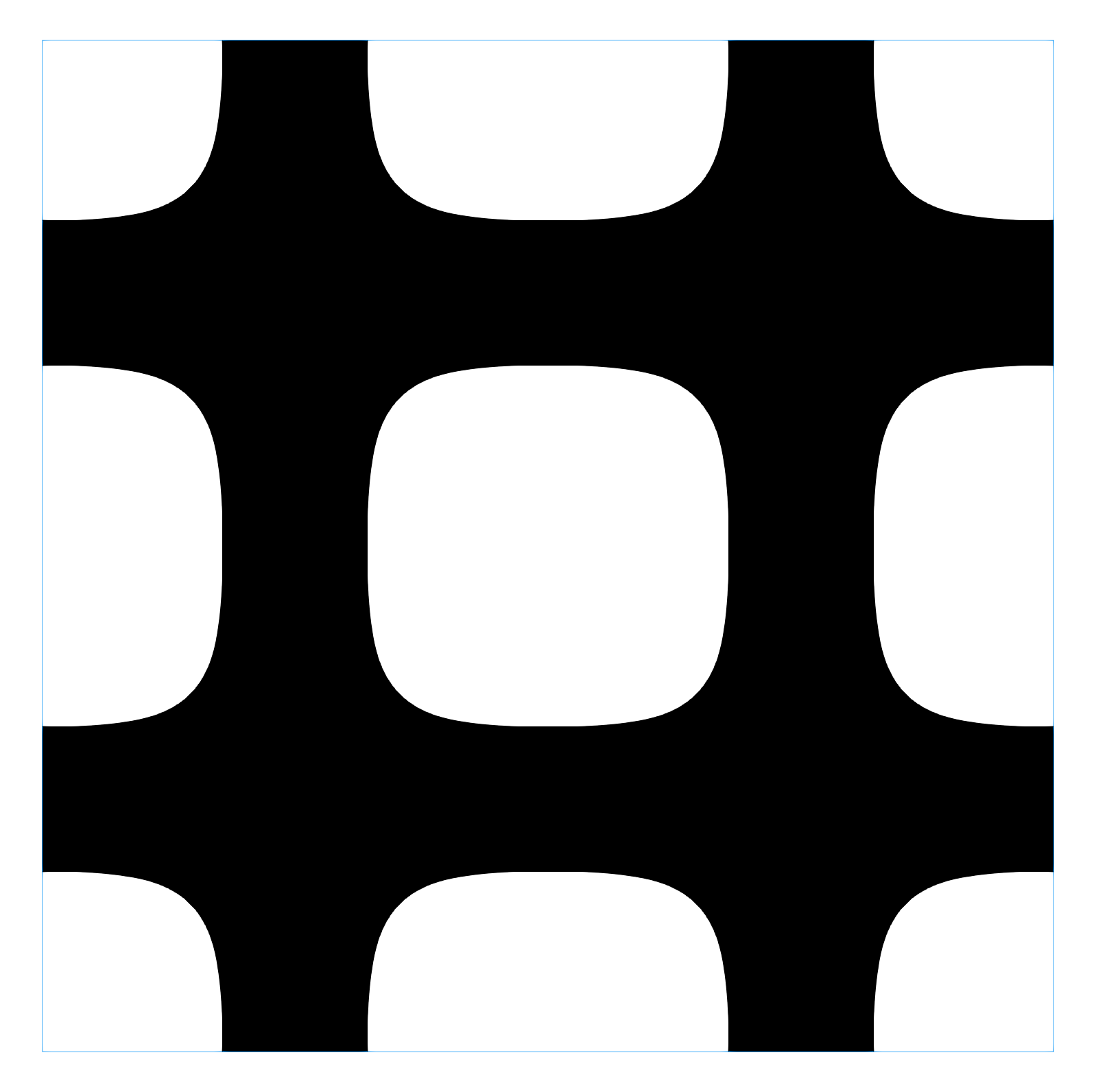}
        \caption{}
        \label{fig:Fig 2b}
    \end{subfigure}\hspace{2em}
    \begin{subfigure}{0.33\linewidth}
        \includegraphics[width=\textwidth]{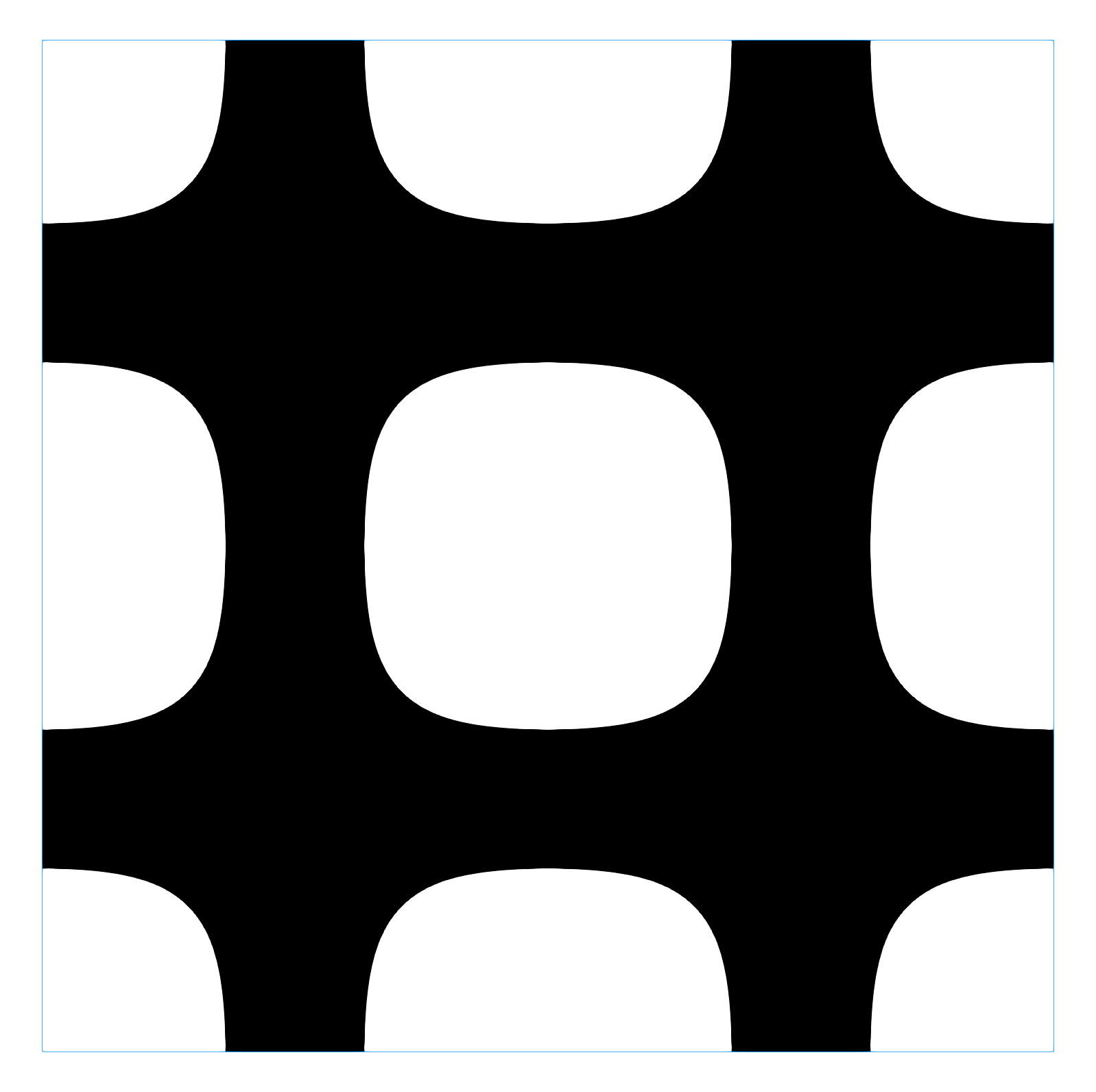}
        \caption{}
        \label{fig:Fig 2c}
    \end{subfigure}
    \begin{subfigure}{0.48\linewidth}
        \includegraphics[width=\textwidth]{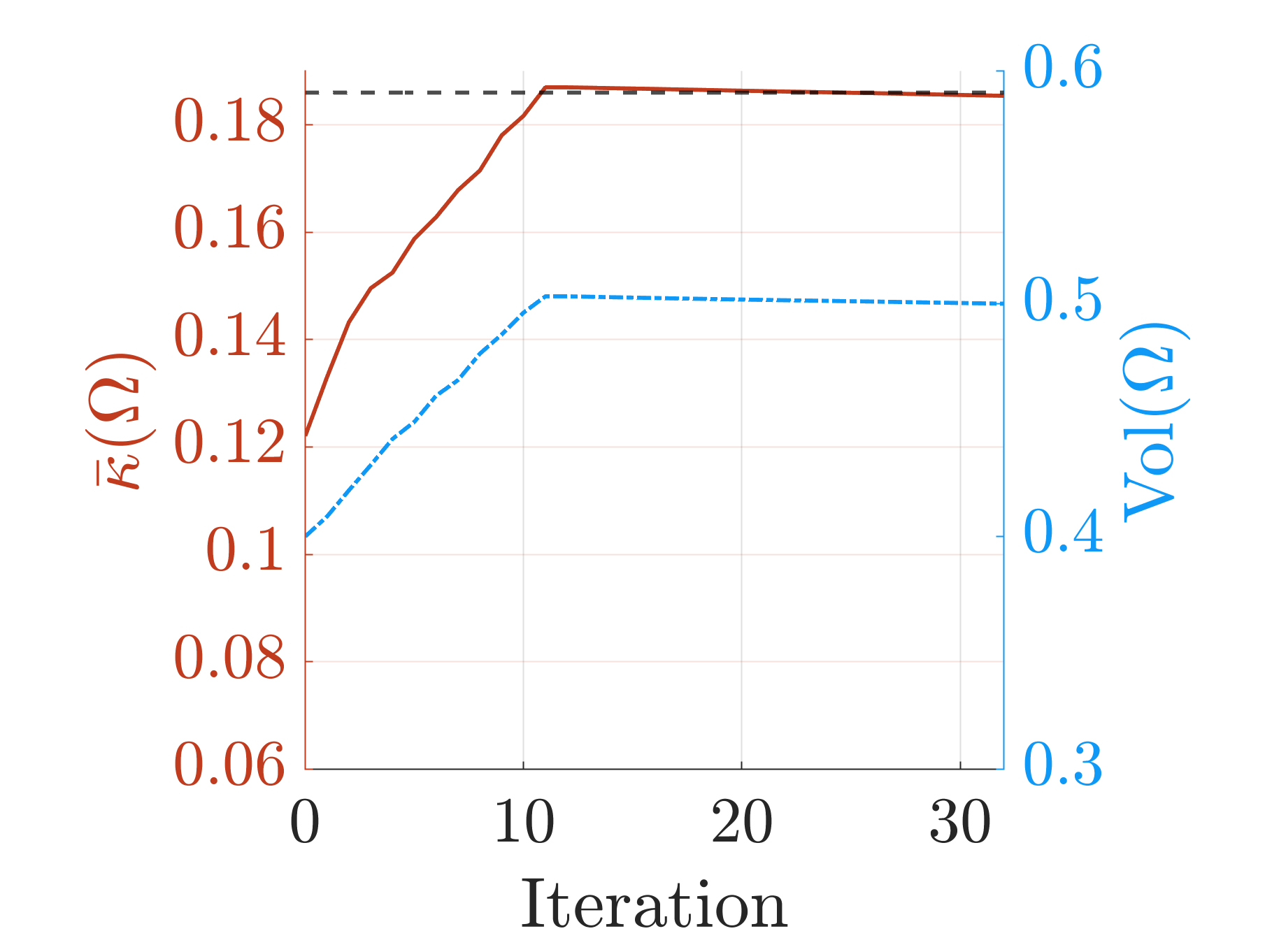}
        \caption{}
        \label{fig:Fig 2d}
    \end{subfigure}
    \begin{subfigure}{0.48\linewidth}
        \includegraphics[width=\textwidth]{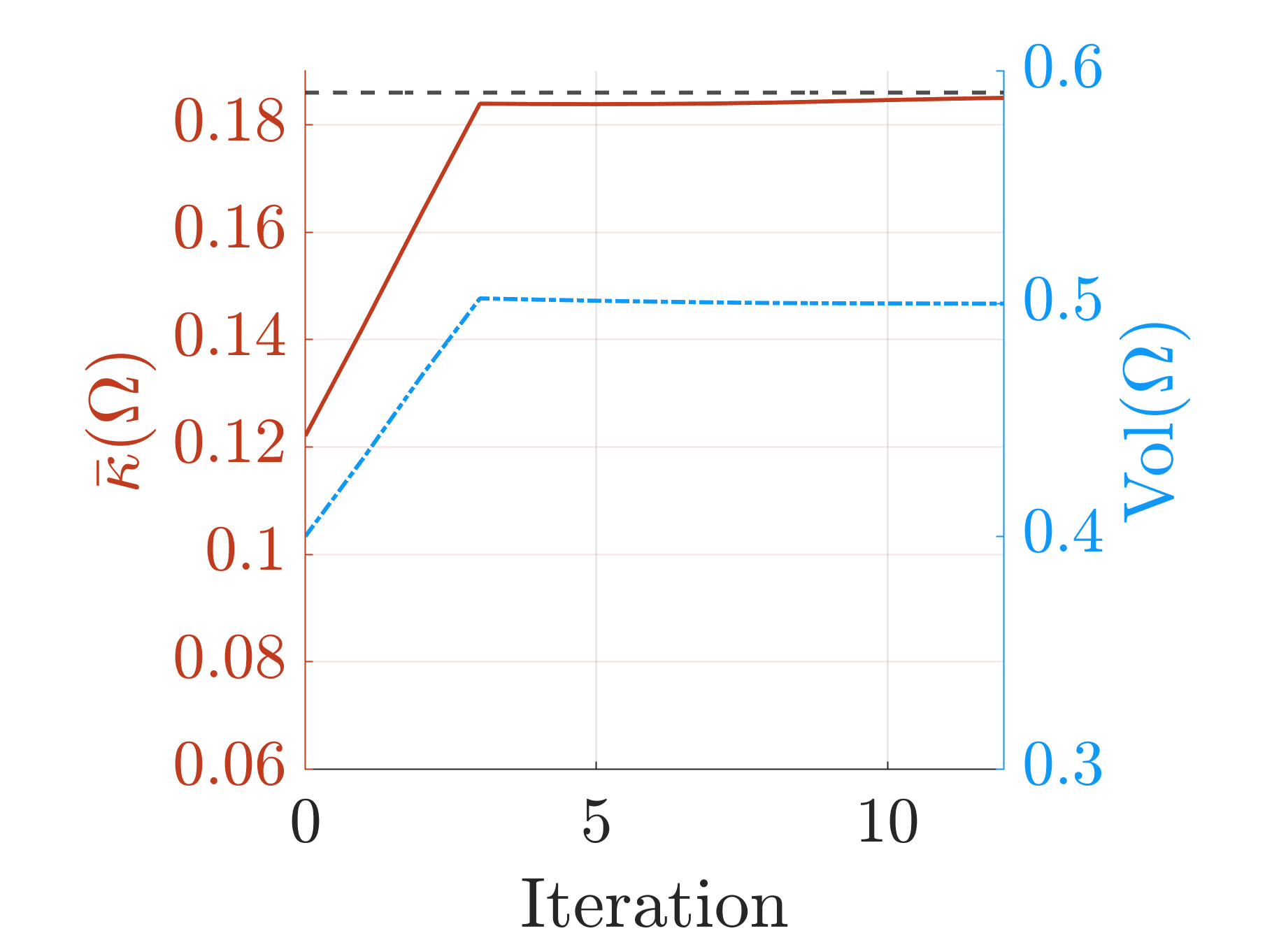}
        \caption{}
        \label{fig:Fig 2e}
    \end{subfigure}
    \end{minipage}
    \caption{Two-dimensional optimisation results for Example 1: maximum bulk modulus. For the starting structure (a), (b) and (c) show the final structures for the Hilbertian projection method and SLP respectively, while (d) and (e) show the respective iteration histories. The Hashin-Shtrikman upper bound for the bulk modulus is given by the dashed black line.}
    \label{fig:Fig 2}

    \begin{minipage}{.2\linewidth}
    \begin{subfigure}{1.1\linewidth}
        \includegraphics[width=\textwidth]{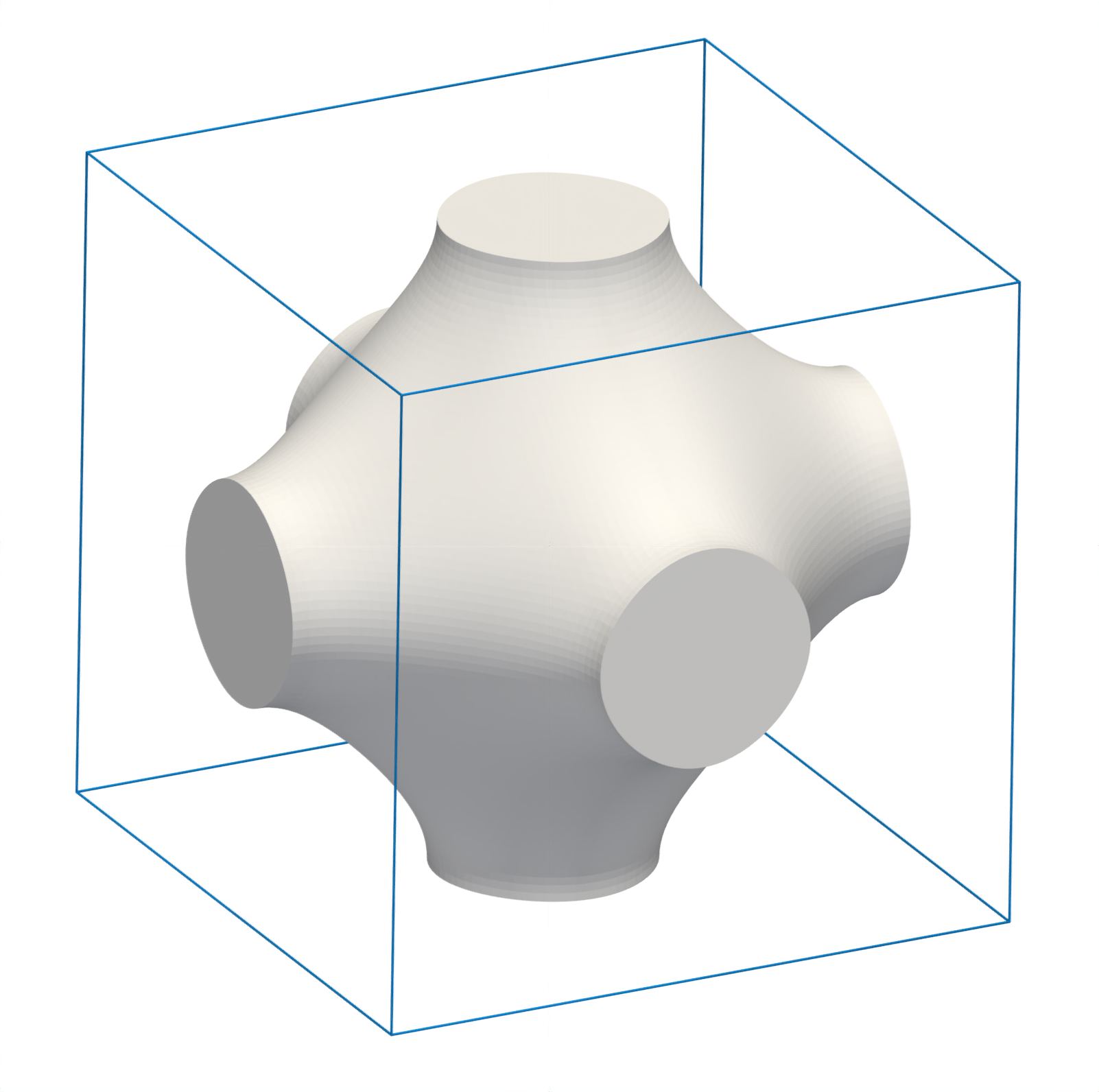}
        \caption{}
        \label{fig:Fig 3a}
    \end{subfigure}
    \end{minipage}
    \Huge{$\mathbf{\rightarrow}$}
    \begin{minipage}{.72\linewidth}
    \centering
    \begin{subfigure}{0.33\linewidth}
        \includegraphics[width=\textwidth]{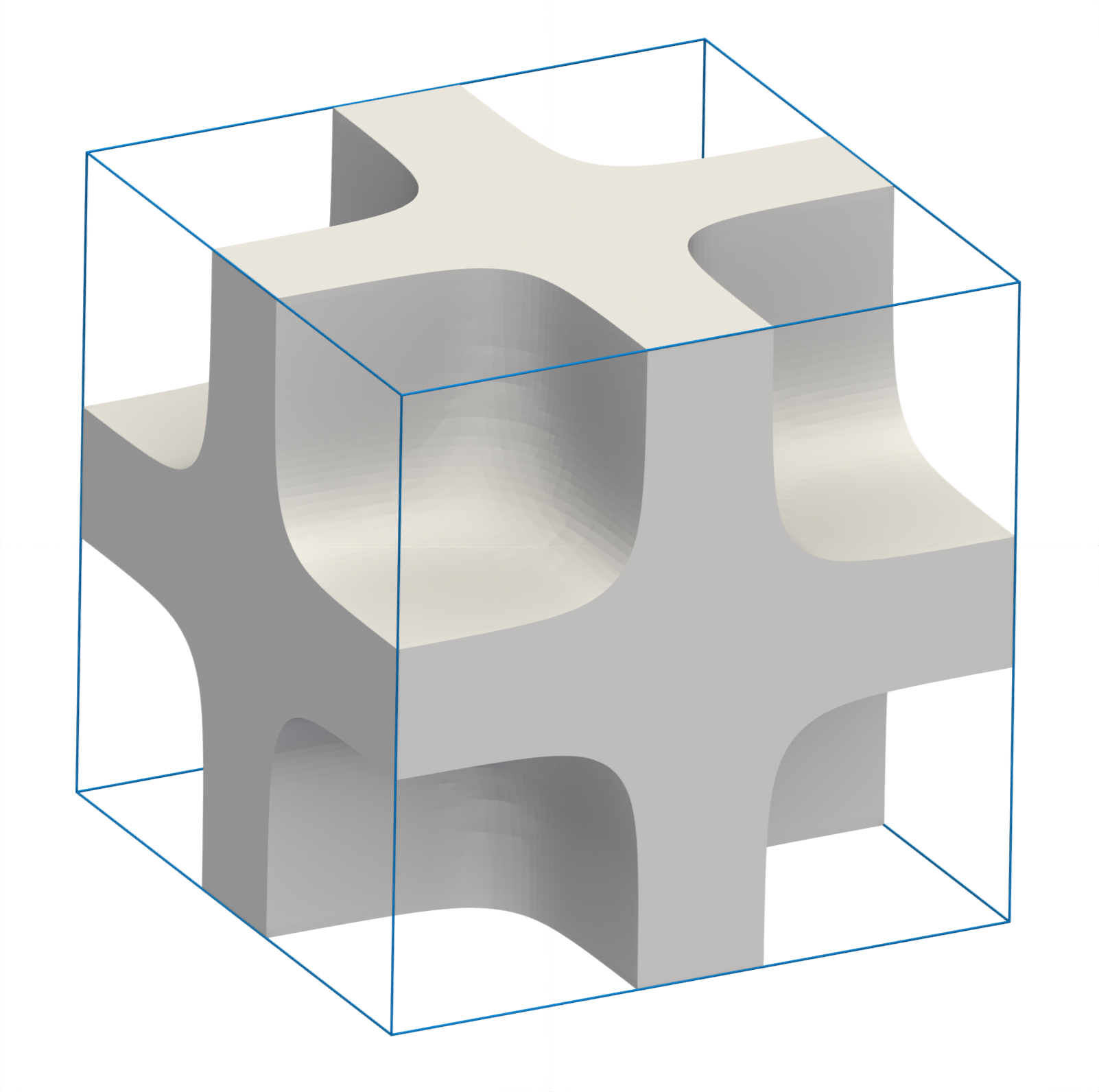}
        \caption{}
        \label{fig:Fig 3b}
    \end{subfigure}\hspace{2em}
    \begin{subfigure}{0.33\linewidth}
        \includegraphics[width=\textwidth]{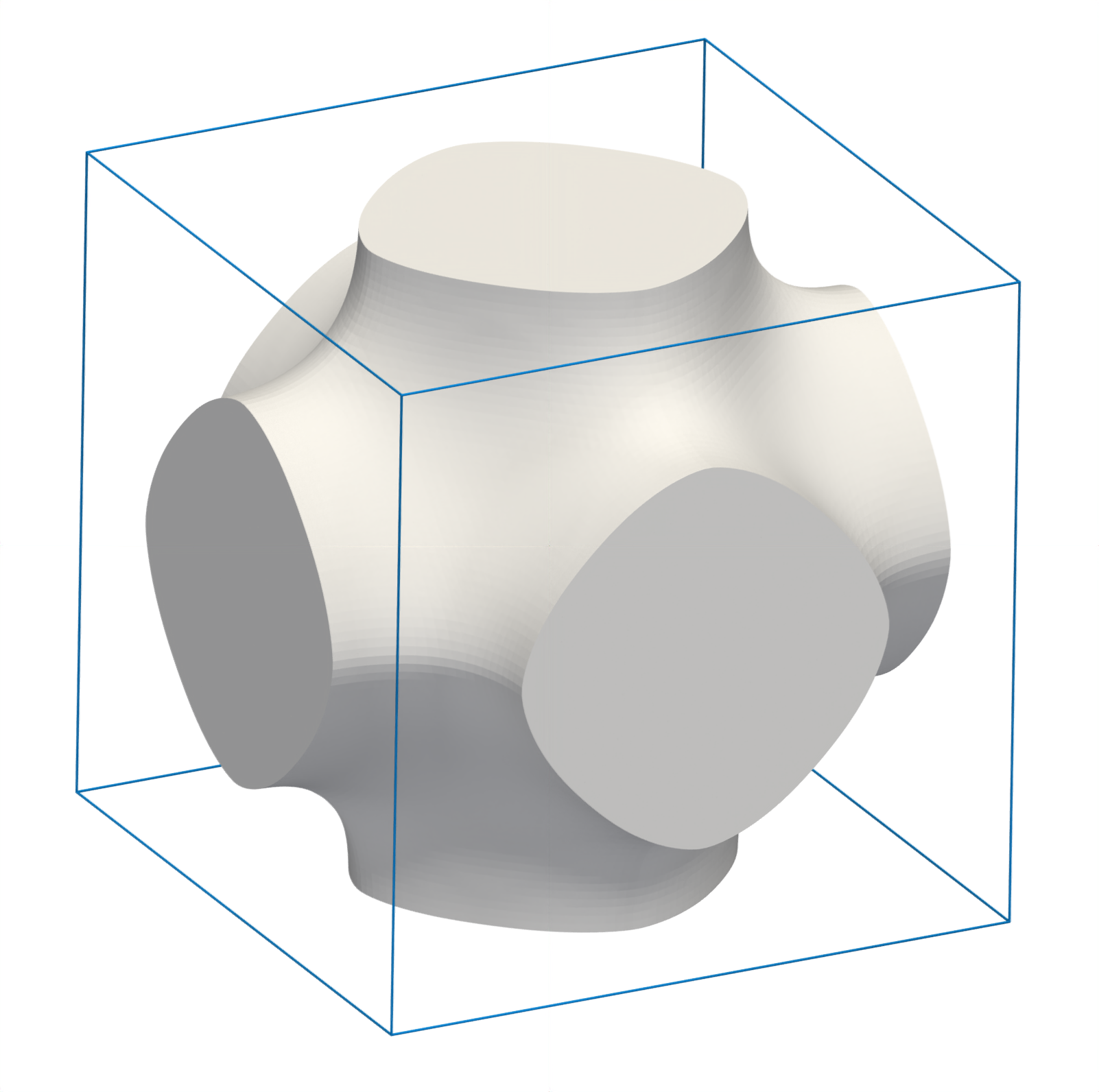}
        \caption{}
        \label{fig:Fig 3c}
    \end{subfigure}
    \begin{subfigure}{0.48\linewidth}
        \includegraphics[width=\textwidth]{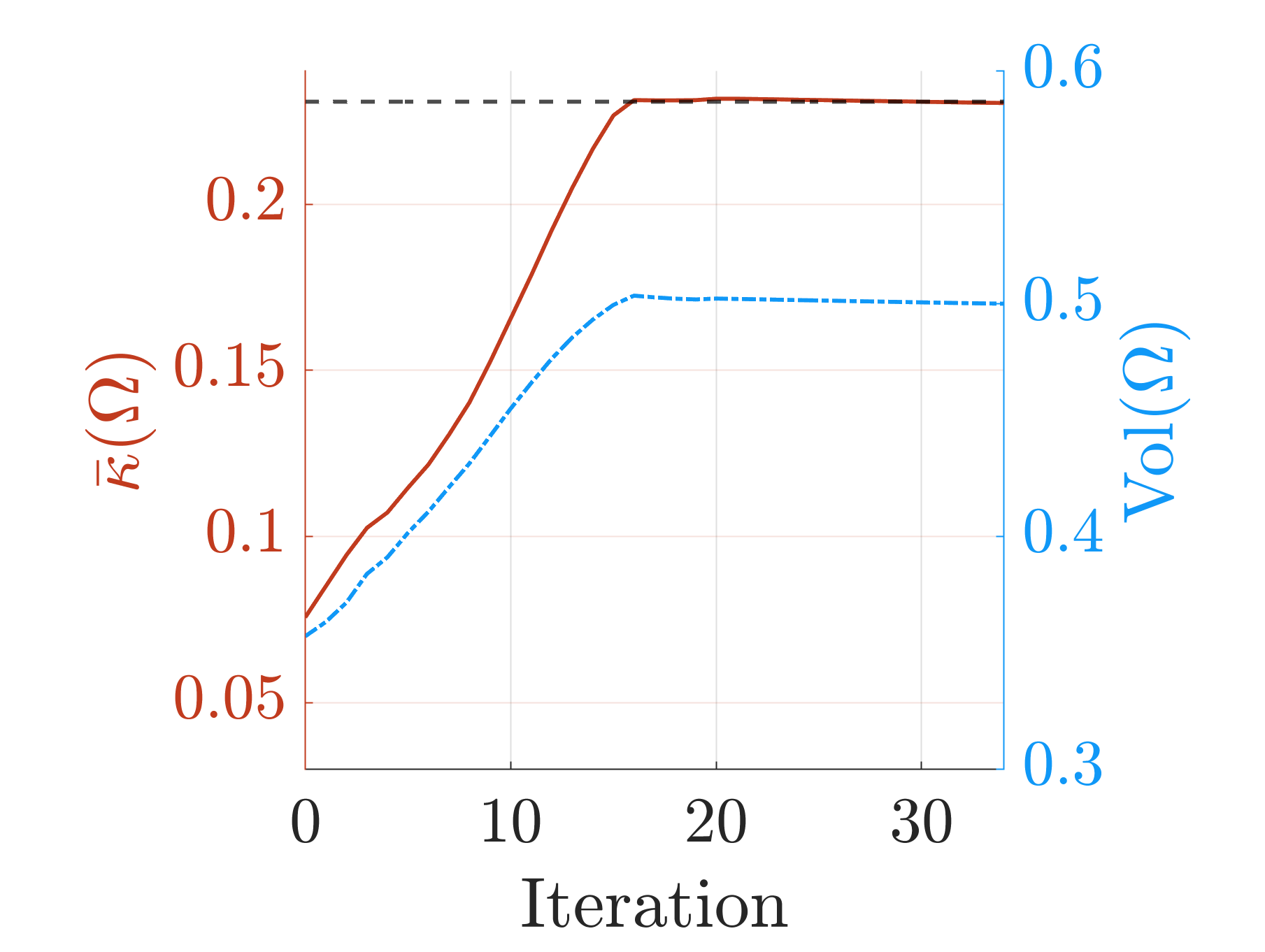}
        \caption{}
        \label{fig:Fig 3d}
    \end{subfigure}
    \begin{subfigure}{0.48\linewidth}
        \includegraphics[width=\textwidth]{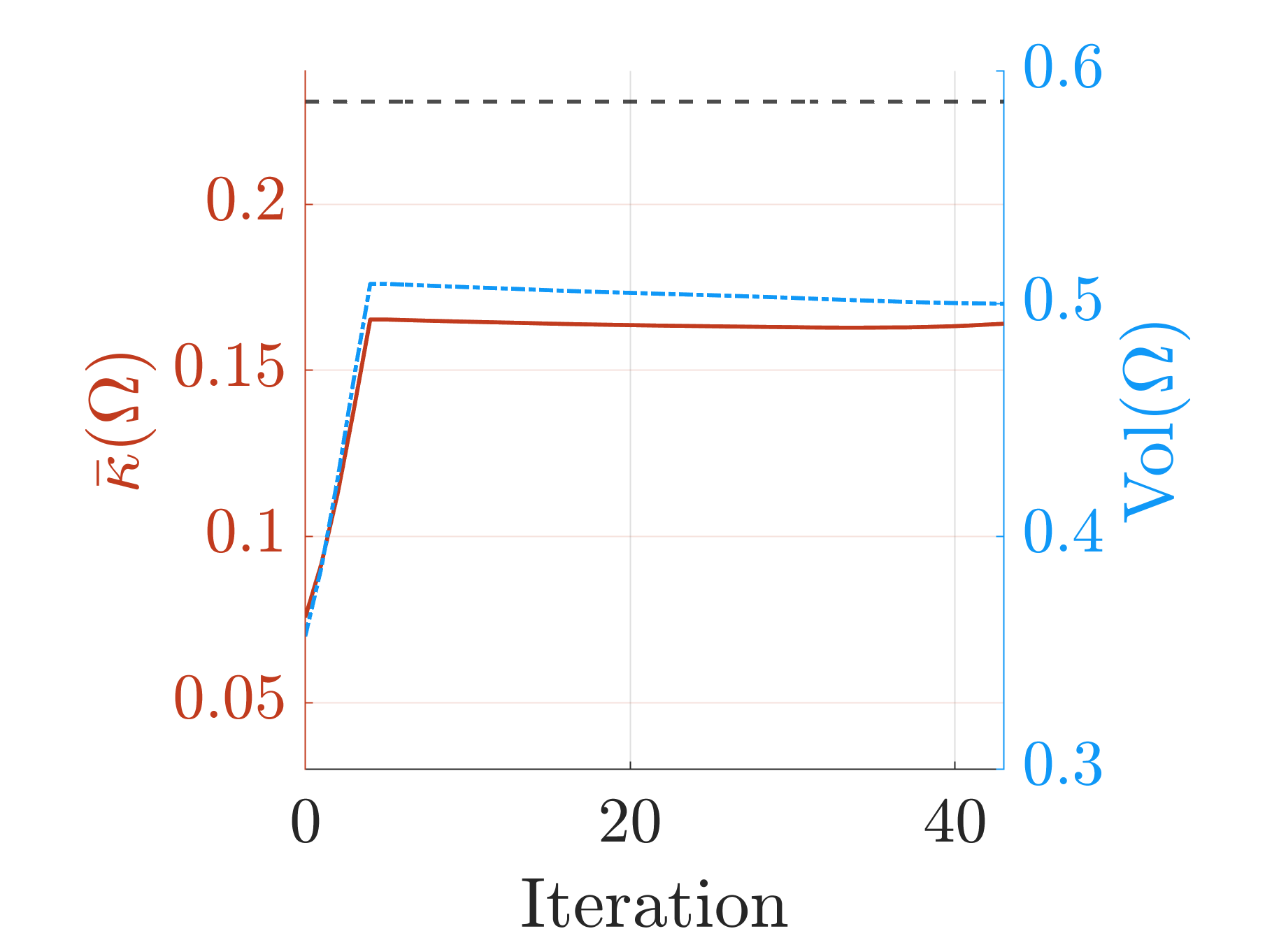}
        \caption{}
        \label{fig:Fig 3e}
    \end{subfigure}
    \end{minipage}
    \caption{Three-dimensional optimisation results for Example 1: maximum bulk modulus. For the starting structure (a), (b) and (c) show the final structures for the Hilbertian projection method and SLP respectively, while (d) and (e) show the respective iteration histories. The Hashin-Shtrikman upper bound for the bulk modulus is given by the dashed black line.}
    \label{fig:Fig 3}
\end{figure*}

Figures \ref{fig:Fig 2} and \ref{fig:Fig 3} show the starting structures and optimisation results for two and three dimensions, respectively for both the Hilbertian projection method and SLP. In addition, we compare the objective value with the Hashin-Shtrikman (HS) upper bound \citep{HASHIN1963127}. Table \ref{tab:summary eg. 1} shows a summary of the results.
\begin{table}[h]
    \centering
    \caption{Summary of optimisation results for Example 1: maximum bulk modulus.}
    \label{tab:summary eg. 1}
    \setlength{\tabcolsep}{0.5em} 
    \begin{tabular}{c|c|c|c|c|c|c}
       Method & $d$ & Fig. & $\bar{\kappa}$ & HS bound & $\operatorname{Vol}$ & Iters. \\\hline
        Proj. & 2 &\ref{fig:Fig 2b}/\ref{fig:Fig 2d} & 0.1854  &  0.1860 &   0.5001		&	32\\
        SLP & 2 &\ref{fig:Fig 2c}/\ref{fig:Fig 2e} & 0.1850  &  0.1860  & 0.5001		&	12\\
        Proj. & 3 &\ref{fig:Fig 3b}/\ref{fig:Fig 3d} & 0.2304  &  0.2308  &  0.5000		&	34\\
        SLP & 3 &\ref{fig:Fig 3c}/\ref{fig:Fig 3e} & 0.1640  &  0.2308  & 0.5000		&	43
    \end{tabular}
\end{table}

In two dimensions both the Hilbertian projection method (Fig. \ref{fig:Fig 2b}/\ref{fig:Fig 2d}) and Hilbertian SLP (Fig. \ref{fig:Fig 2c}/\ref{fig:Fig 2e}) perform well, converging in 32 and 12 iterations respectively at 99.71\% and 99.51\% of the HS bound. In addition, the resulting structures are geometrically similar and match classical results in the literature \cite[Sec. 2.10.3,][]{TopOptMonograph}. It is worth noting that no topological changes occur in this example.

In three dimensions the Hilbertian projection method converges to 99.86\% of the HS bound while Hilbertian SLP converges to 71.07\% of the bound. Over the course of the iteration history, topological changes occur for the Hilbertian projection method, while SLP fails to evolve from the initial topology. The resulting structure from the Hilbertian projection method matches other classical results \cite[Sec. 2.10.3,][]{TopOptMonograph}.

\subsection{Example 2: Maximum bulk modulus with isotropy}\label{subsec: eg problems - Bmod iso}
In Example 2 we consider the same problem setup as Example 1 with the addition of macroscopic isotropy constraints. This ensures that the resulting homogenised stiffness tensor is invariant under rotation. 

In two dimensions the optimisation problem is given by
\begin{equation}
    \begin{aligned}
            \underset{\Omega\in\mathcal{U}_{\mathrm{ad}}}{\min}~
              &-\bar{\kappa}(\Omega)\\
            \text{s.t.}~~ & \mathrm{Vol}(\Omega)=1/2,\\
            & C_i(\Omega)=0,~i=1,\dots,6,\\
            &a(\boldsymbol{u},\boldsymbol{v})=l(\boldsymbol{v}),~\forall \boldsymbol{v}\in V.
          \end{aligned}
\end{equation}
The constraints $C_i(\Omega)$ are given by
    \begin{align}
        &\sqrt{4\bar{\kappa}^2+8\bar{\mu}^2}C_1=\bar{C}_{1111}-\bar{\kappa}-\bar{\mu},\\
        &\sqrt{4\bar{\kappa}^2+8\bar{\mu}^2}C_2=\bar{C}_{2222}-\bar{\kappa}-\bar{\mu},\\
        &\sqrt{4\bar{\kappa}^2+8\bar{\mu}^2}C_3=\sqrt{2}(\bar{C}_{1122}-\bar{\kappa}+\bar{\mu}),\\
        &\sqrt{4\bar{\kappa}^2+8\bar{\mu}^2}C_4=2\bar{C}_{1112},\\
        &\sqrt{4\bar{\kappa}^2+8\bar{\mu}^2}C_5=2\bar{C}_{2212},\\
        &\sqrt{4\bar{\kappa}^2+8\bar{\mu}^2}C_6=2(\bar{C}_{1212}-\bar{\mu}),
    \end{align}
where $\bar{\mu}$ is the isotropic shear modulus given by
\begin{equation}
    \bar{\mu} = \frac{1}{8}(\bar{C}_{1111} + \bar{C}_{2222}) - \frac{1}{4}\bar{C}_{1122} + \frac{1}{2}\bar{C}_{1212}.
\end{equation}
Analogous expressions appear in three dimensions and the number of isotropy constraints increases from 6 to 21 \citep{ChallisThesis}. It should be noted that the term $\sqrt{4\bar{\kappa}^2+8\bar{\mu}^2}$ appears as a normalisation coefficient and is considered constant for the purpose of the shape differentiation. Furthermore, owing to the symmetry of the isotropy constraints the extended constraint shape sensitivities have a nullity of two. Our method handles these with no special treatment.

To visualise the behaviour of the isotropy constraints in the iteration history, we define the effective anisotropy $\bar{\mathcal{A}}$ to be the sum of squares of the violation of these constraints. That is,
\begin{equation}
    \bar{\mathcal{A}}(\Omega)=\sqrt{\sum C_i(\Omega)^2}.
\end{equation}

For the two-dimensional problem we implement both the full set of isotropy constraints as well as the single constraint $\bar{A}=0$. 
We find that the SLP method struggles with the full set of constraints. For this reason we instead use $\bar{\mathcal{A}}=0$ as the isotropy constraint for the SLP method. For the Hilbertian projection method we find that the full set of constraints is more effective. This matches previous literature \cite[e.g.,][]{10.1016/j.ijsolstr.2008.02.025_2008,ChallisThesis}. 

\begin{figure*}[p]
    \centering
    \begin{subfigure}{0.25\linewidth}
        \includegraphics[width=\textwidth]{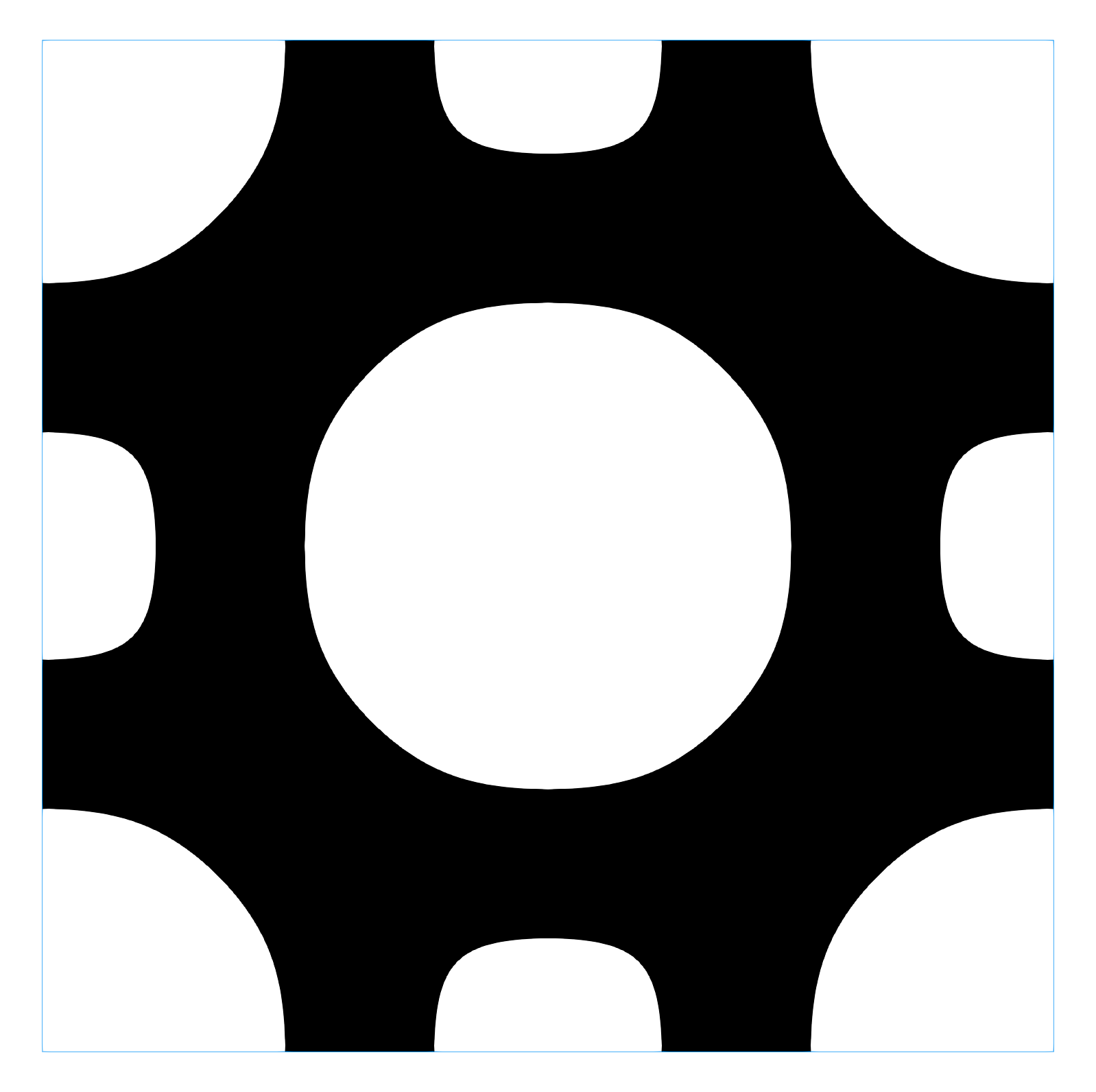}
        \caption{}
        \label{fig:Fig 4a}
    \end{subfigure}\hspace{3.3em}
    \begin{subfigure}{0.25\linewidth}
        \includegraphics[width=\textwidth]{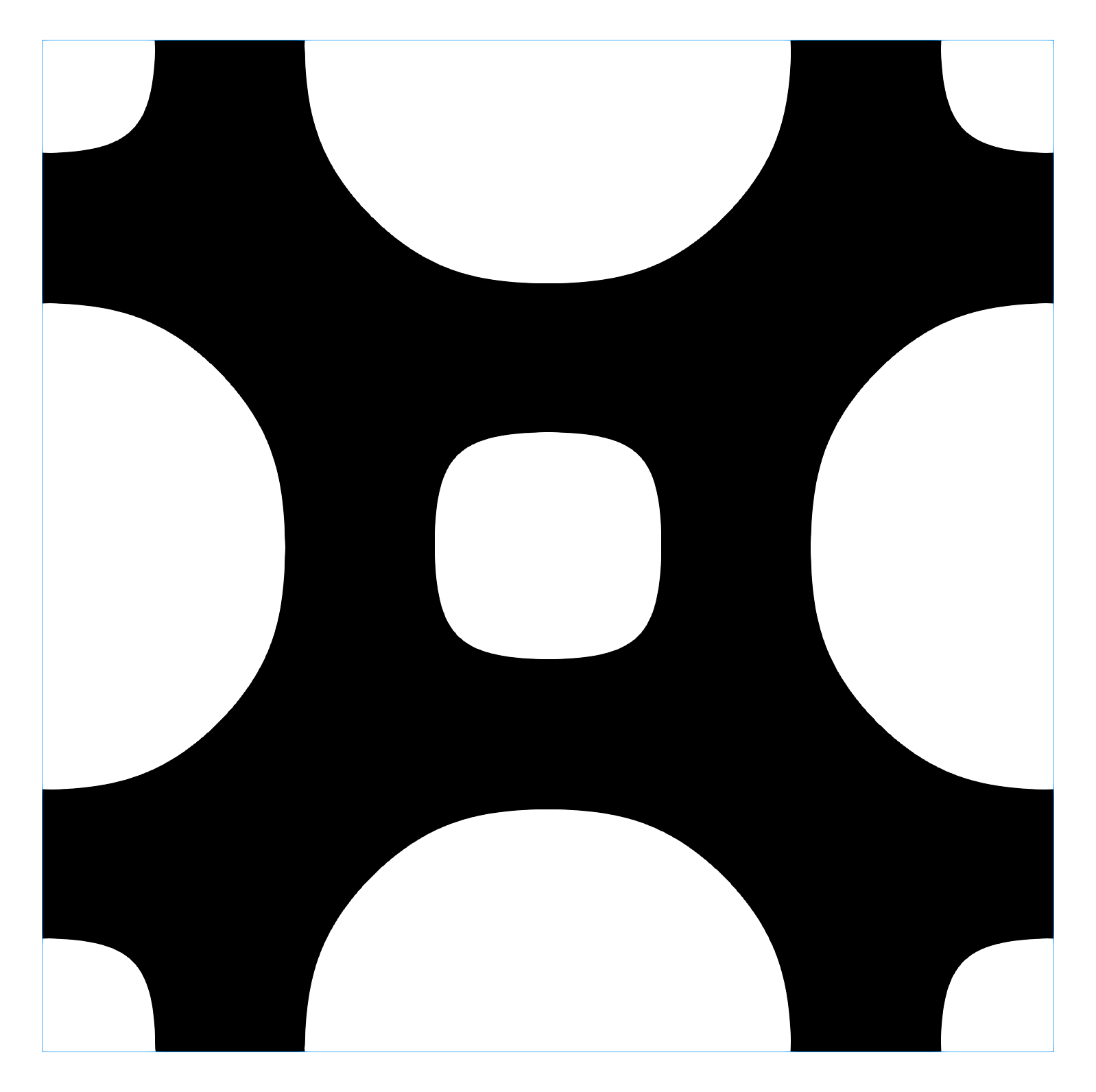}
        \caption{}
        \label{fig:Fig 4b}
    \end{subfigure}\hspace{3.3em}
    \begin{subfigure}{0.25\linewidth}
        \includegraphics[width=\textwidth]{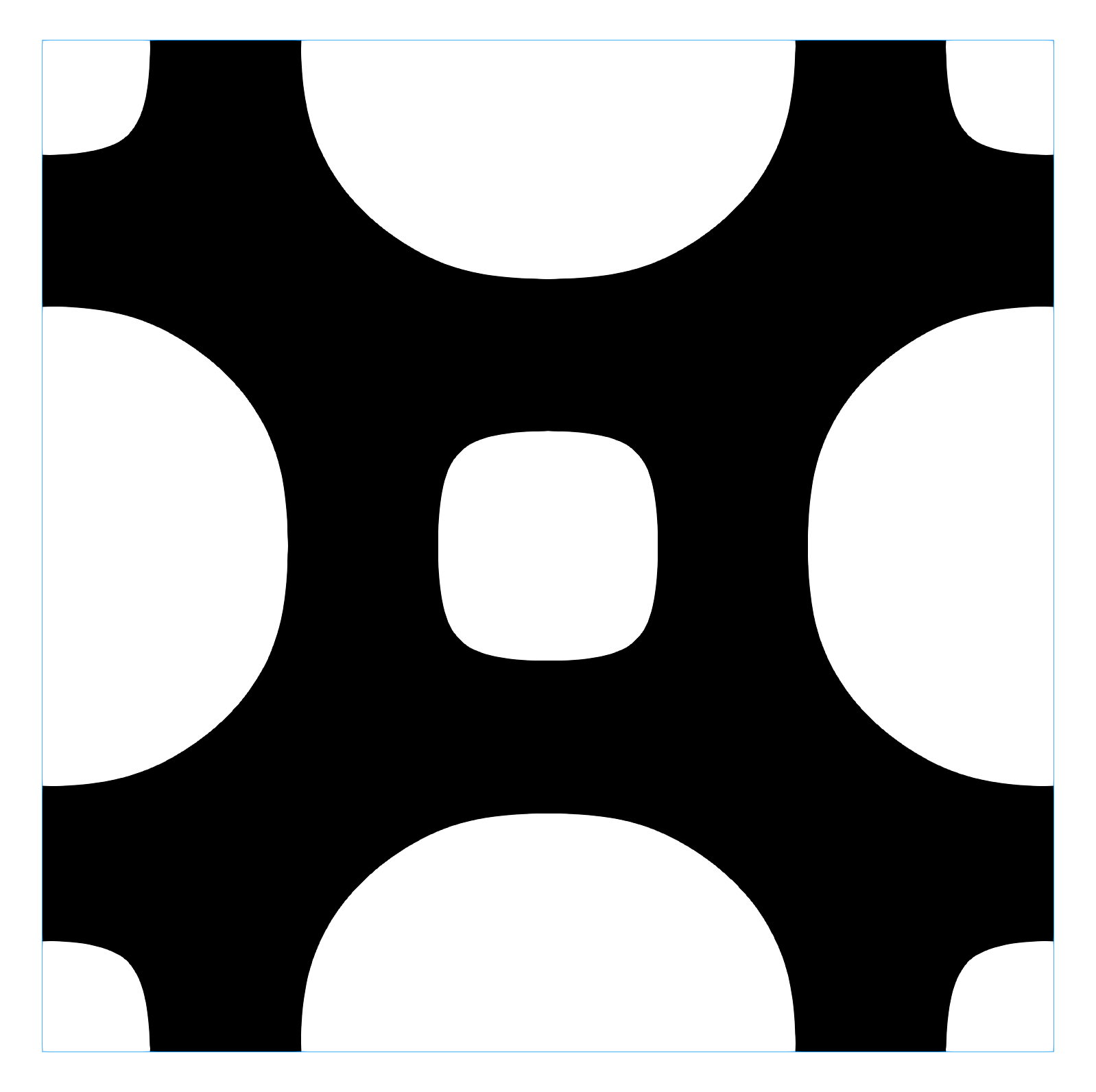}
        \caption{}
        \label{fig:Fig 4c}
    \end{subfigure}
    \begin{subfigure}{0.32\linewidth}
        \includegraphics[width=\textwidth]{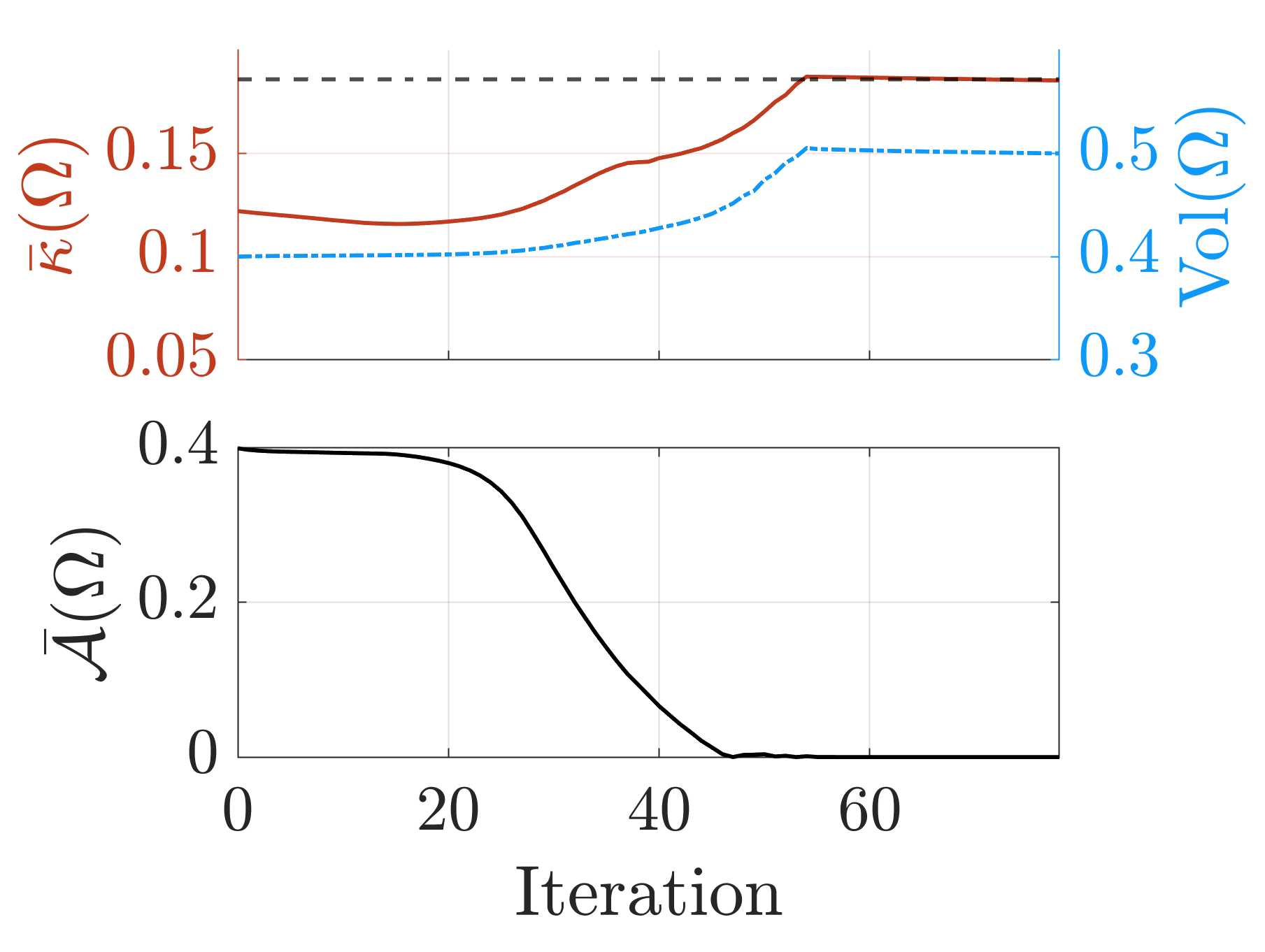}
        \caption{}
        \label{fig:Fig 4d}
    \end{subfigure}
    \begin{subfigure}{0.32\linewidth}
        \includegraphics[width=\textwidth]{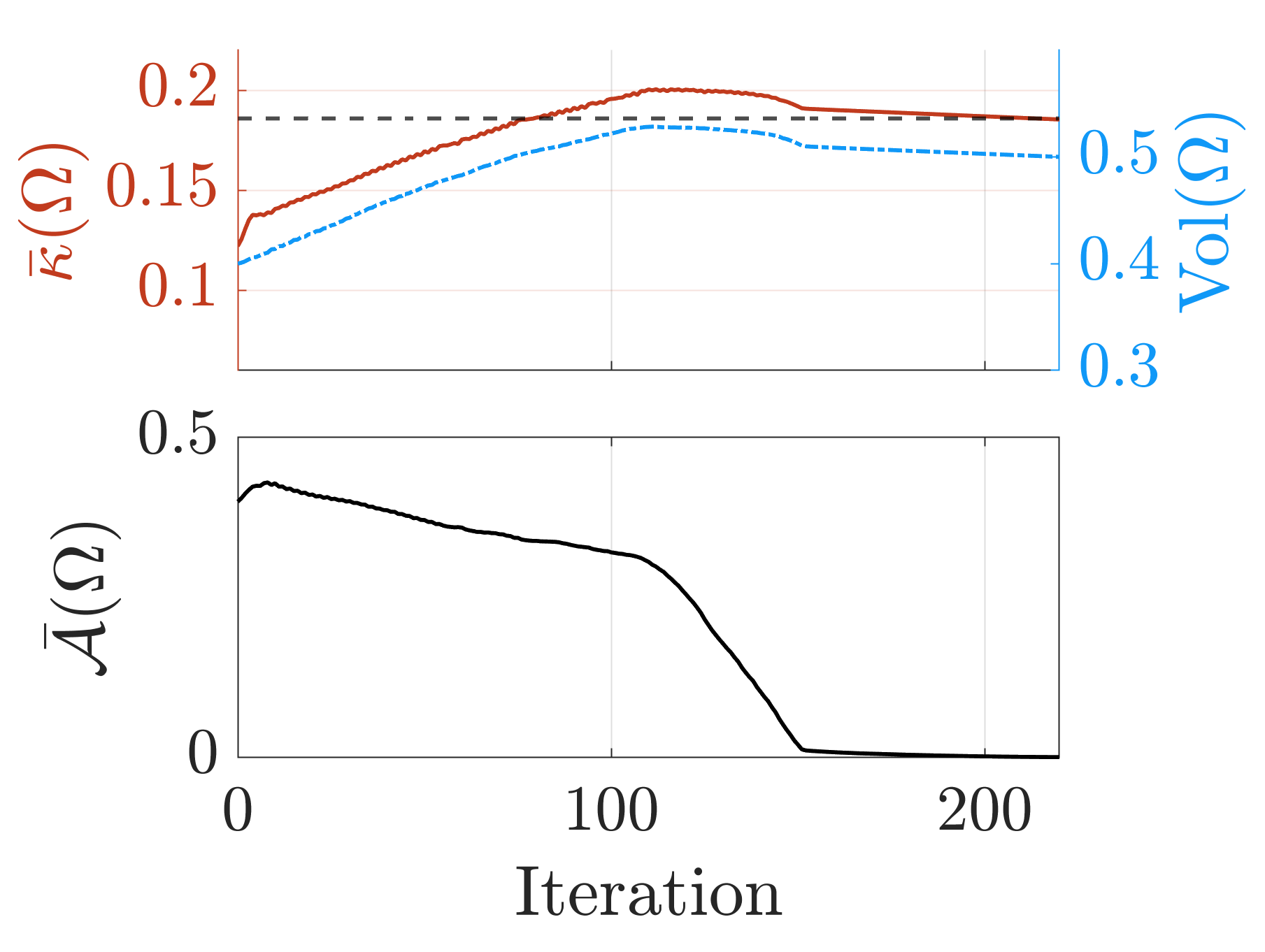}
        \caption{}
        \label{fig:Fig 4e}
    \end{subfigure}
    \begin{subfigure}{0.32\linewidth}
        \includegraphics[width=\textwidth]{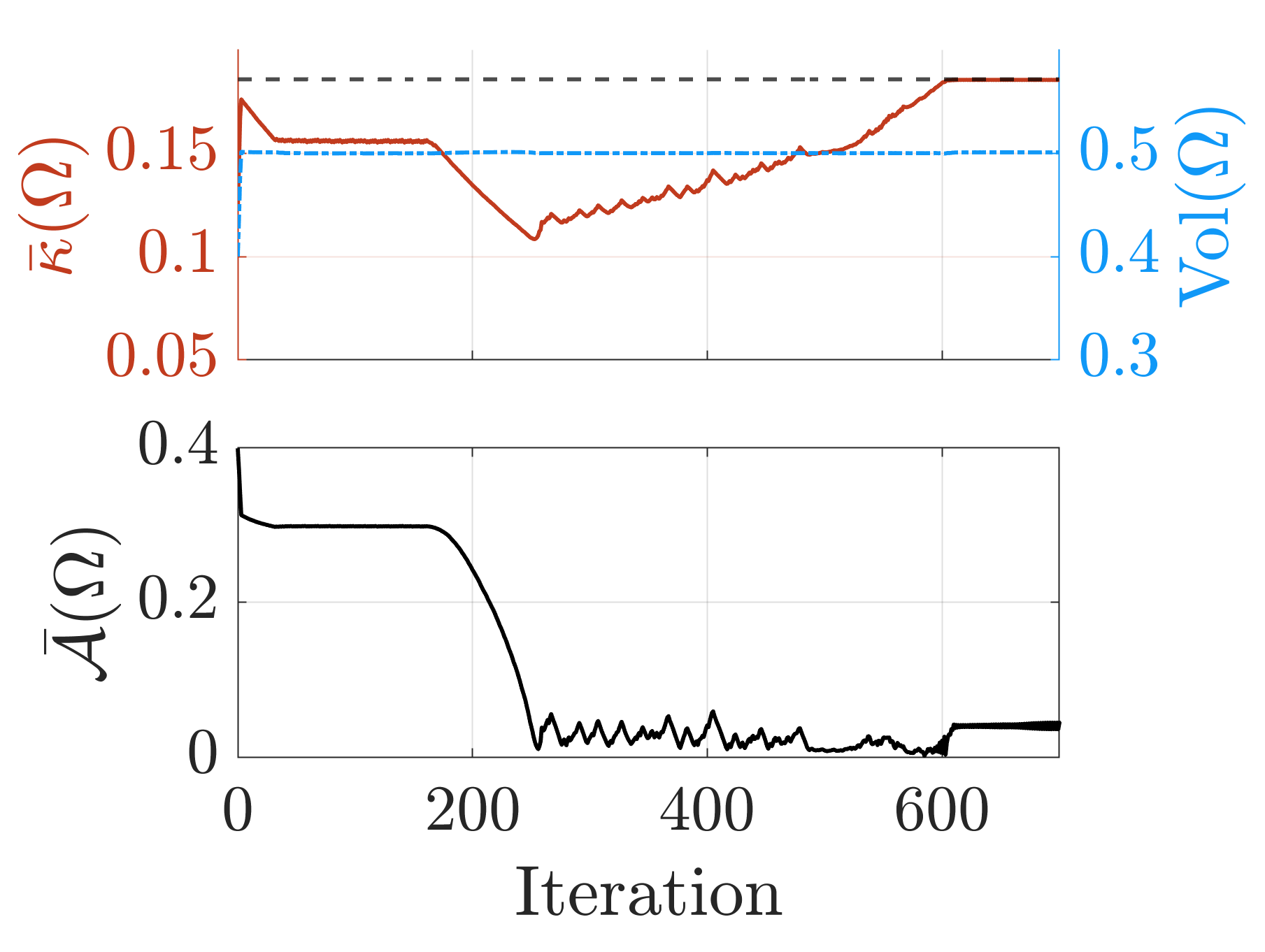}
        \caption{}
        \label{fig:Fig 4f}
    \end{subfigure}
    \caption{Two-dimensional optimisation results for Example 2: maximum bulk modulus with isotropy. (a), (b), and (c) show the final structures for the Hilbertian projection method with the full set of constraints, single anisotropy constraint, and SLP with the single anisotropy constraint respectively, while (d), (e), and (f) show the respective iteration histories. The Hashin-Shtrikman upper bound for the bulk modulus is given by the dashed black line. Due to very little change between iteration 600 and 1000, the upper bound on the $x$-axis in (f) has been reduced to 700.}
    \label{fig:Fig 4}

    \centering
    \begin{minipage}{.72\linewidth}
    \centering
    \begin{subfigure}{0.33\linewidth}
        \includegraphics[width=\textwidth]{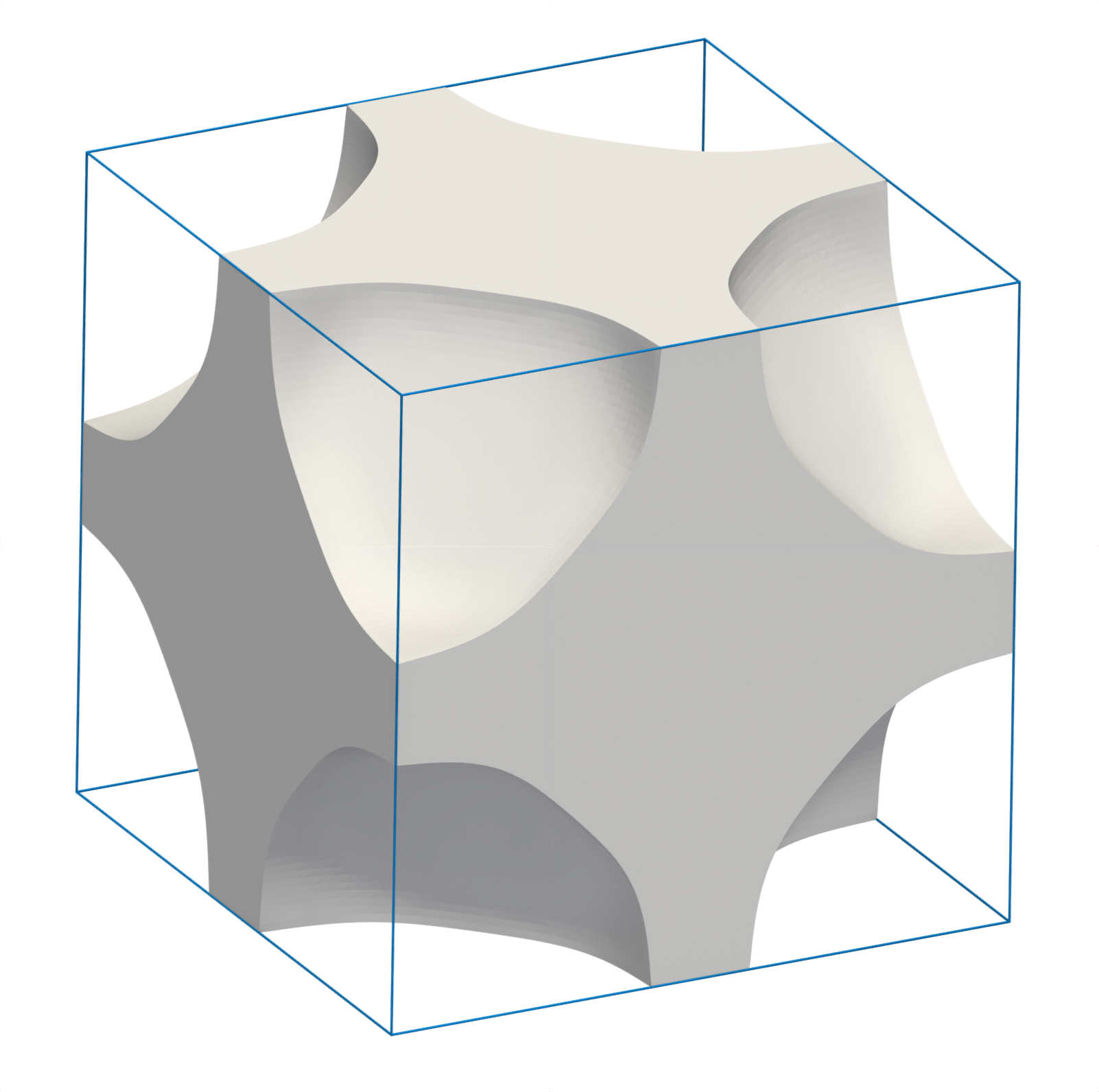}
        \caption{}
        \label{fig:Fig 5a}
    \end{subfigure}\hspace{2em}
    \begin{subfigure}{0.33\linewidth}
        \includegraphics[width=\textwidth]{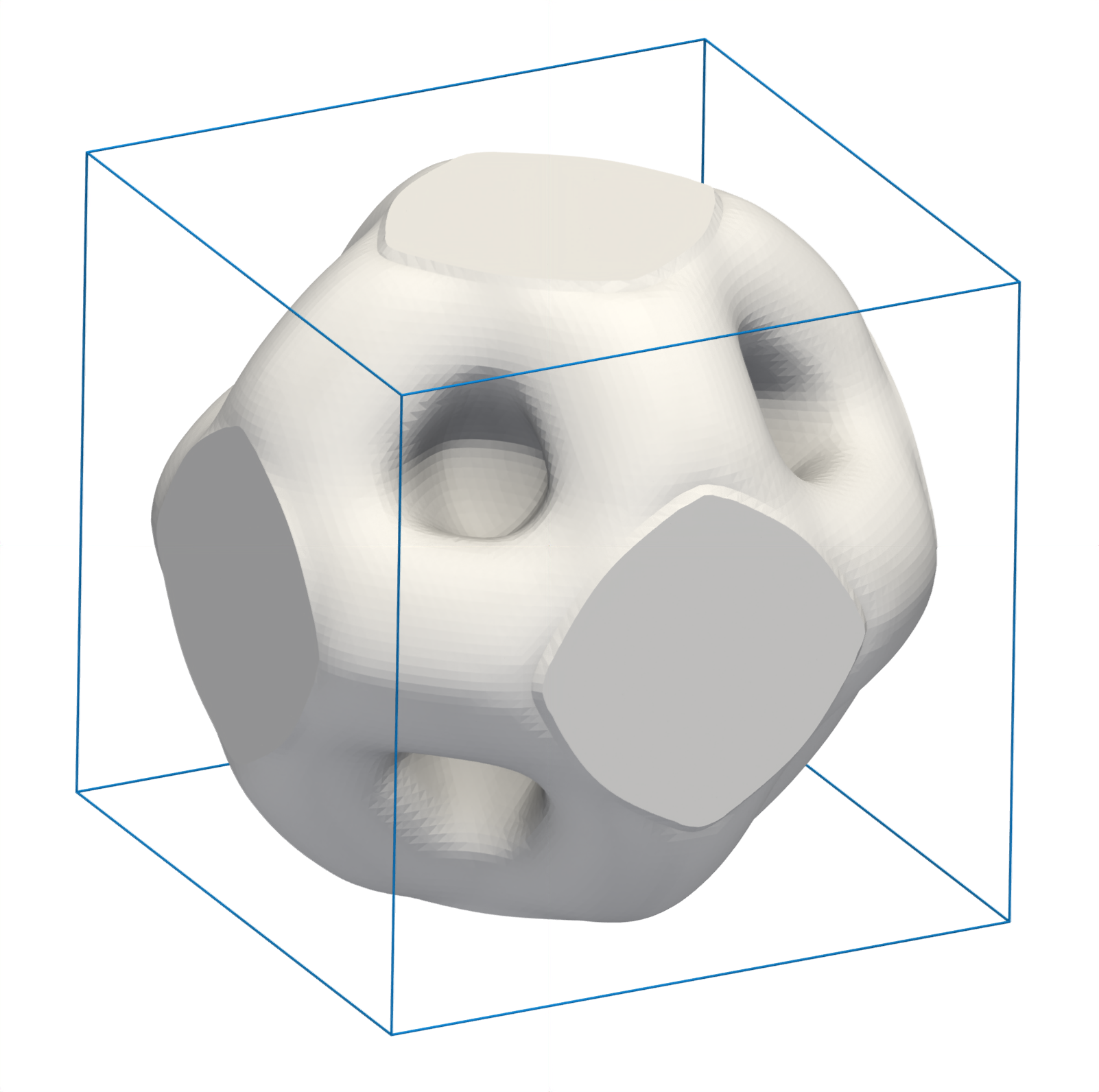}
        \caption{}
        \label{fig:Fig 5b}
    \end{subfigure}
    \begin{subfigure}{0.48\linewidth}
        \includegraphics[width=\textwidth]{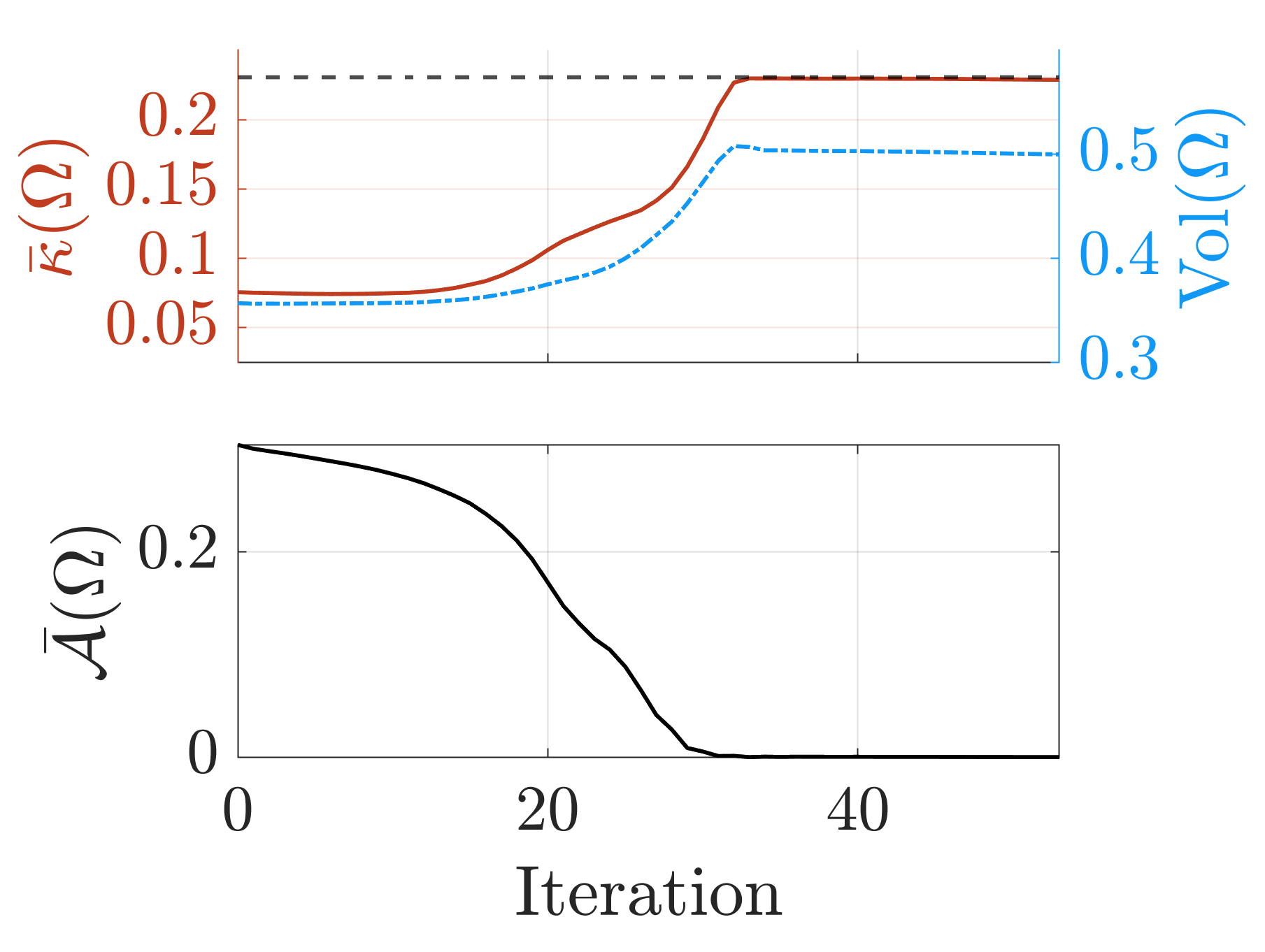}
        \caption{}
        \label{fig:Fig 5c}
    \end{subfigure}
    \begin{subfigure}{0.48\linewidth}
        \includegraphics[width=\textwidth]{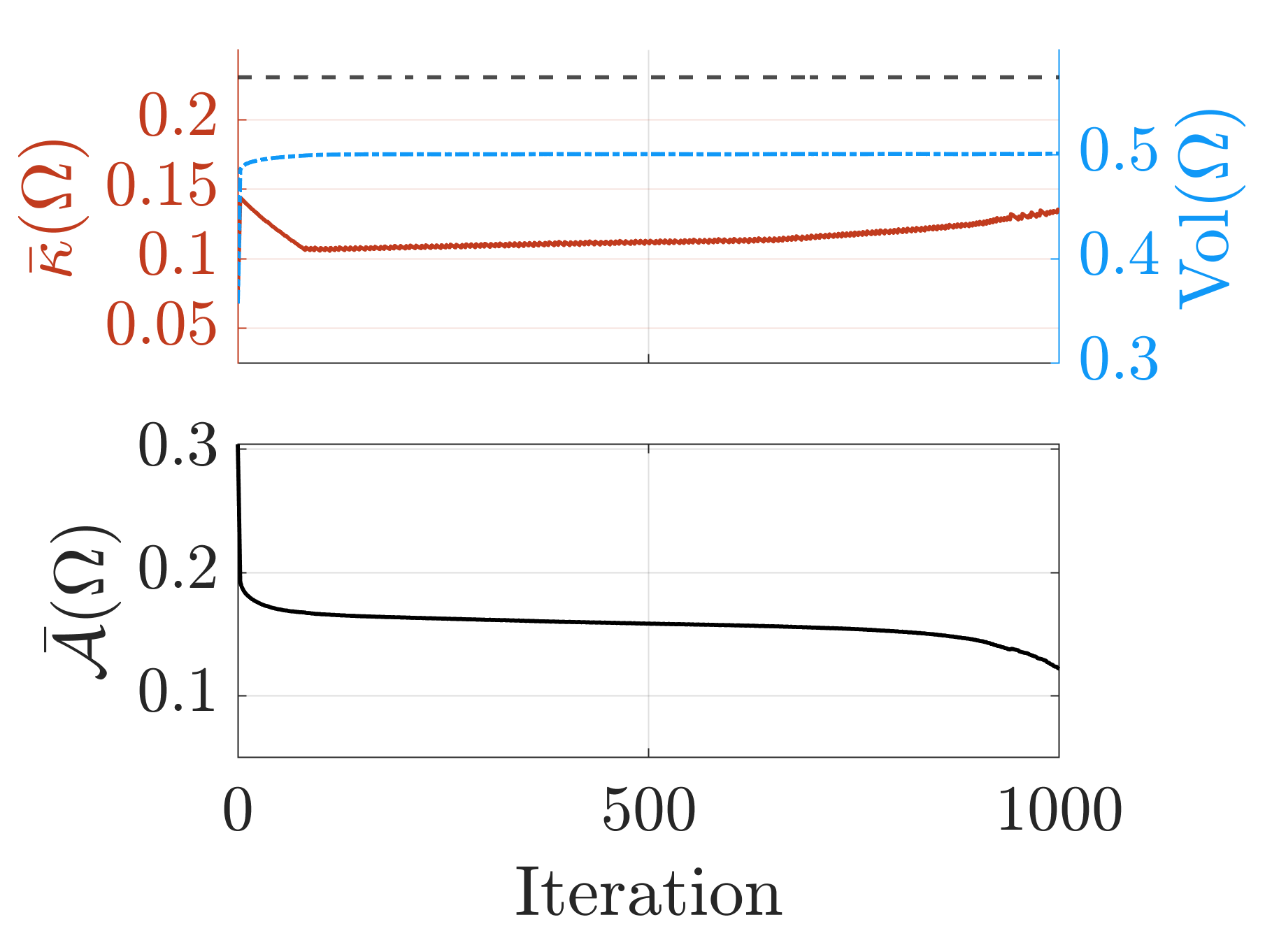}
        \caption{}
        \label{fig:Fig 5d}
    \end{subfigure}
    \end{minipage}
    \caption{Three-dimensional optimisation results for Example 2: maximum bulk modulus with isotropy. (a) and (b) show the final structures for the Hilbertian projection method and SLP respectively, while (c) and (d) show the respective iteration histories. The Hashin-Shtrikman upper bound for the bulk modulus is given by the dashed black line.}
    \label{fig:Fig 5}
\end{figure*}

Figure \ref{fig:Fig 4} shows the two-dimensional results for the Hilbertian projection method with the full set of constraints and single anisotropy measure constraint, and SLP with the anisotropy measure constraint. We omit the starting structure as this is the same as previously (Fig.~\ref{fig:Fig 2a}). Figure \ref{fig:Fig 5} shows the three-dimensional results for the Hilbertian projection method and SLP.  Table \ref{tab:summary eg. 2} shows a summary of the results for this example.
\begin{table*}[t]
    \centering
    \caption{Summary of optimisation results for Example 2: maximum bulk modulus with isotropy. The asterisk denotes failure to converge.}
    \label{tab:summary eg. 2}
    \setlength{\tabcolsep}{0.5em} 
    \begin{tabular}{c|c|c|c|c|c|c|c|c}
     Method & $d$ & Iso. Const. Type & Fig. & $\bar{\kappa}$ & HS bound & $\bar{\mathcal{A}}$ & $\operatorname{Vol}$ & Iters. \\\hline
        Proj. & 2 & Full set & \ref{fig:Fig 4a}/\ref{fig:Fig 4d} & 0.1854  &  0.1860 & 0.0001 &   0.5000 &  78\\
        Proj. & 2 & Aniso. Meas. & \ref{fig:Fig 4b}/\ref{fig:Fig 4e} & 0.1854  &  0.1860 & 0.0001 &   0.5000  &  220\\
        SLP & 2 & Aniso. Meas. & \ref{fig:Fig 4c}/\ref{fig:Fig 4f} & 0.1856  &  0.1860 & 0.0363 &  0.5010 &  1000*\\
        Proj. & 3 & Full set & \ref{fig:Fig 5a}/\ref{fig:Fig 5c} & 0.2287  &  0.2308 & 0.0001 &   0.5000 &  53\\
        SLP & 3 & Aniso. Meas. & \ref{fig:Fig 5b}/\ref{fig:Fig 5d} & 0.1295 &   0.2308 &  0.1348  &  0.5007  &  1000*
    \end{tabular}
\end{table*}

In two dimensions the Hilbertian projection method performs well for the full set of constraints and the single anisotropy measure constraint. The number of iterations is markedly lower when using the full set of constraints (78 vs 220). This is unsurprising because including the individual symmetry constraints enables the projection method to improve each constraint separately \citep{10.1016/j.ijsolstr.2008.02.025_2008,ChallisThesis}. The final optimisation values with the full set or single constraint are very similar, being 99.68\% and 99.69\% of the HS bound, while the final structures are geometrically identical under a periodic shift of half the cell edge length along each coordinate direction. The resulting structures match other classical results \cite[Sec. 2.10.3,][]{TopOptMonograph}. In contrast, the SLP method does not manage to reduce the measure of anisotropy to zero and reaches the maximum number of iterations. However, the final structure obtained with SLP (Fig. \ref{fig:Fig 4c}) is very close to those obtained with the Hilbertian projection method (Fig. \ref{fig:Fig 4a} and \ref{fig:Fig 4b}). We suspect that SLP fails to converge due to the difficulty associated with choosing the trust region constraints.

For the three-dimensional results, we find that the Hilbertian projection method with the full set of constraints converges in 53 iterations to 99.12\% of the HS bound. The final structure again matches known results from the literature \cite[Sec. 2.10.3,][]{TopOptMonograph}. For SLP, the optimisation algorithm fails to converge in 1000 iterations and the final structure is clearly not optimal. 

These results demonstrate that the Hilbertian projection method is able to handle a large number of constraints (22 in 3-dimensions) without requiring any parameter tuning of $\alpha_{\mathrm{min}}$ or $\lambda$. In addition, the optimisation histories for the Hilbertian projection method with the full set of isotropy constraints (Fig. \ref{fig:Fig 4d} and \ref{fig:Fig 5c}) are smooth and converge fairly quickly.

\subsection{Example 3: Auxetic materials}\label{subsec: eg problems - auxetic}
In this example we consider two-dimensional minimum volume auxetic materials with a Poisson's ratio of $-0.5$. For the problem set up, we consider a bounding domain $D=[0,1]^2$ that contains a solid phase and void phase. As previously, the solid phase is constructed from an isotropic material with $E=1$ and $\nu=0.3$.  

To obtain an effective Poisson's ratio $\bar{\nu}=-0.5$, we require that $\bar{C}_{1111}=\bar{C}_{2222}$ and prescribe a value to $\bar{C}_{1111}$ and $\bar{C}_{1122}$ so that the effective Poisson's ratio $\bar{\nu}$ given by
\begin{equation}
    \bar{\nu}=\frac{\bar{C}_{1122}}{\bar{C}_{1111}}
\end{equation}
gives the required $\bar{\nu}=-0.5$.

It may be noted that this is similar to the approach taken by \cite{10.1016/j.cad.2016.09.009_2017}. However, they instead minimise a weighted sum of $\bar{C}_{ijkl}$ and prescribed value subject to a volume constraint.

For the purpose of this example we choose $\bar{C}_{1111}=0.1$, which results in $\bar{C}_{1122}=-0.05$. The resulting optimisation problem is then
\begin{equation}
    \begin{aligned}
            \underset{\Omega\in\mathcal{U}_{\mathrm{ad}}}{\min}~
              &\mathrm{Vol}(\Omega)\\
            \text{s.t.}~~ & \bar{C}_{1111}(\Omega)=0.1,\\
            &\bar{C}_{2222}(\Omega)=0.1,\\
            &\bar{C}_{1122}(\Omega)=-0.05,\\
            &\bar{C}_{1112}(\Omega)=0,\\
            &\bar{C}_{2212}(\Omega)=0,\\
            &a(\boldsymbol{u},\boldsymbol{v})=l(\boldsymbol{v}),~\forall \boldsymbol{v}\in V.
          \end{aligned}
\end{equation}
For this example we use $\gamma_{\mathrm{max}}=0.05$ and $\alpha_{\mathrm{min}}^2=0.5$. This means that the optimiser favors improvement of the constraints rather than the objective and does not move too quickly to avoid disconnecting in the first few iterations.

\begin{figure*}[t]
    \centering
    \begin{subfigure}{0.25\linewidth}
        \includegraphics[width=\textwidth]{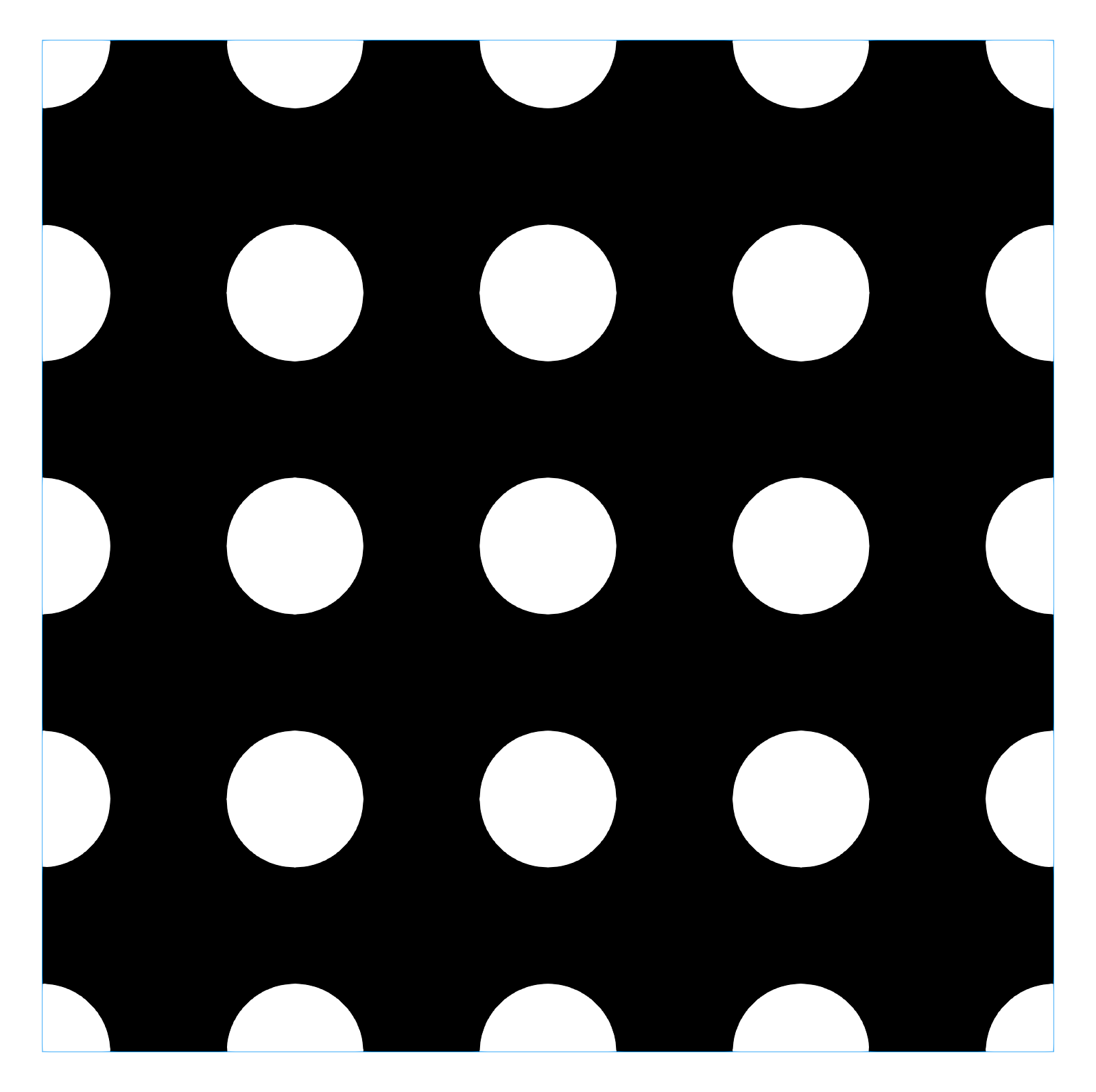}
        \caption{}
        \label{fig:Fig 6a}
    \end{subfigure}
    \begin{subfigure}{0.05\linewidth}
        \centering
        \Huge{$\mathbf{\rightarrow}$}
        \vspace{1.5cm}
    \end{subfigure}
    \begin{subfigure}{0.25\linewidth}
        \includegraphics[width=\textwidth]{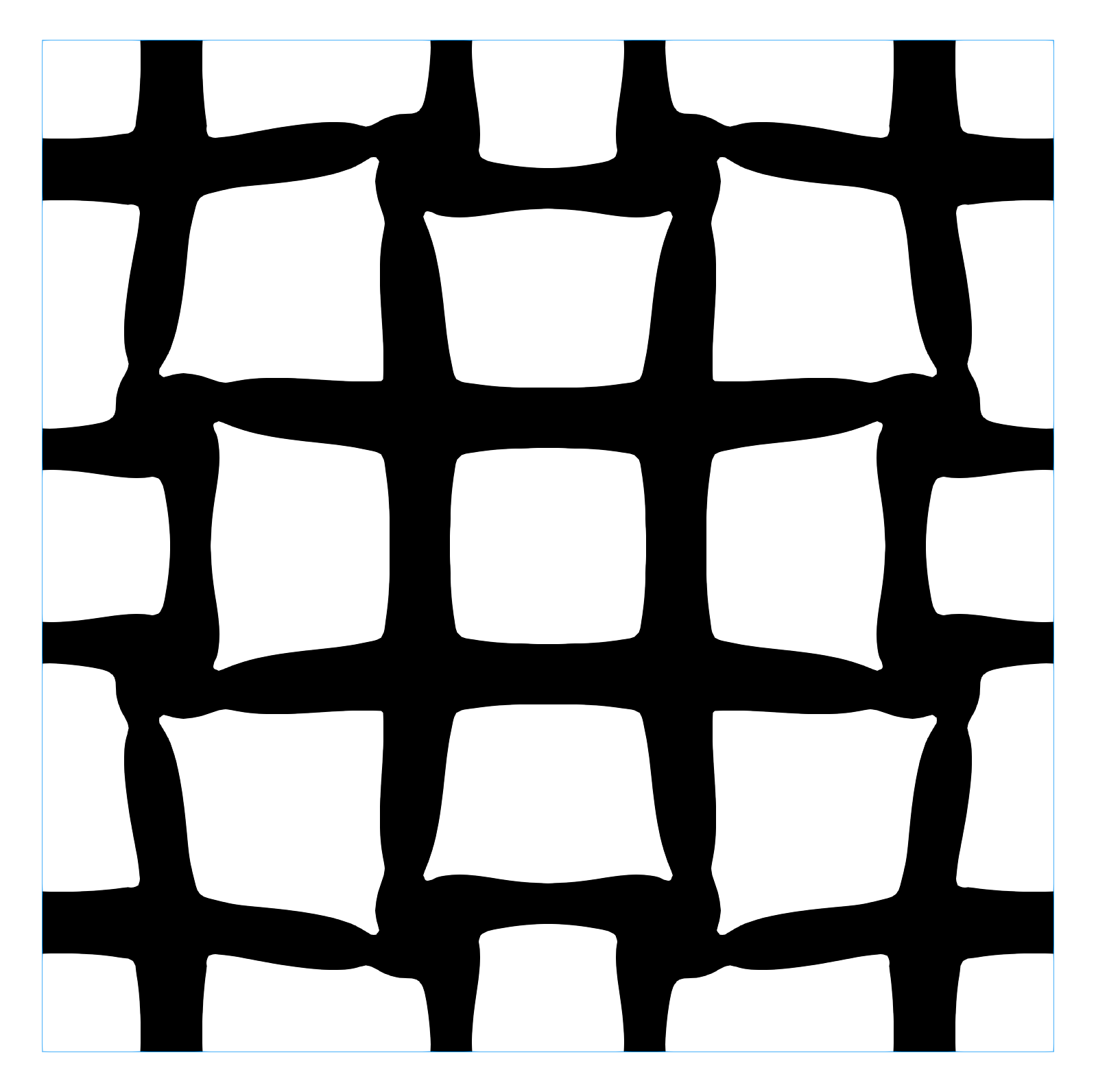}
        \caption{}
        \label{fig:Fig 6b}
    \end{subfigure}
    \begin{subfigure}{0.35\linewidth}
        \includegraphics[width=\textwidth]{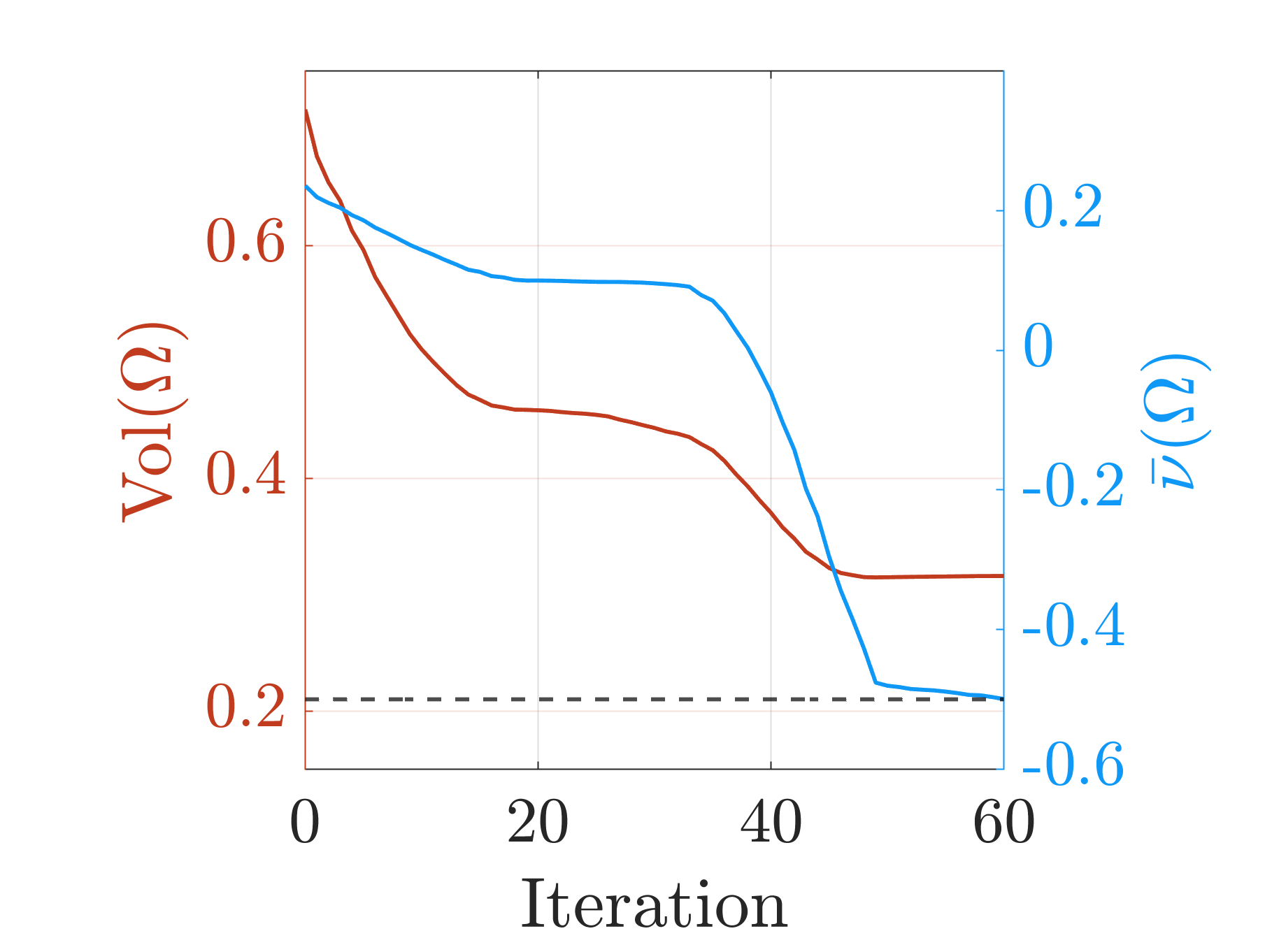}
        \caption{}
        \label{fig:Fig 6c}
    \end{subfigure}
    \caption{Optimisation results for Example 3: auxetic materials. For the starting structure (a), (b) shows the final structure while (c) shows the iteration history for the Hilbertian projection method. The desired Poisson's ratio of -0.5 is given by the dashed black line.}
    \label{fig:Fig 6}
\end{figure*}

Figure \ref{fig:Fig 6} shows the starting structure (Fig. \ref{fig:Fig 6a}) and optimisation results (Fig. \ref{fig:Fig 6b} and \ref{fig:Fig 6c}) for this problem using the Hilbertian projection method. We use a periodic starting structure with sixteen equally spaced holes. The method converges in 61 iterations with a final volume of 0.3159 and Poisson's ratio of -0.4998. In contrast, SLP fails because the optimiser prioritises the objective leading to a disconnected solid phase and the algorithm is unable to recover.

\subsection{Example 4: Multi-phase materials}\label{subsec: eg problems - multi-phase results}
For our final example we consider two-dimensional multi-phase maximum bulk modulus problems with and without isotropy constraints. We consider a bounding domain $D=[0,1]^2$ that contains two solid phases and void. The solid phase contained in $\Omega_2$ is an isotropic material with $E=1$ and $\nu=0.3$, while $\Omega_3$ contains isotropic material with $E=1/2$ and $\nu=0.3$. The void phase is contained in $\Omega_1$ and $\Omega_4$. As previously, most void phase is removed from the mesh (see Sec. \ref{sec: numerical impl}) while any material close to the interface is specified as weak material with $E=10^{-3}$ and $\nu=0.3$. 

We consider two optimisation problems. The first is maximum bulk modulus subject to volume constraints on $\Omega_2$ and $\Omega_3$ given by
\begin{equation}\label{eqn: multiphase example no-iso}
    \begin{aligned}
            \underset{\mathcal{D}_1,\mathcal{D}_2\in\mathcal{U}_{\mathrm{ad}}}{\min}~
              &-\bar{\kappa}(\mathcal{D}_1,\mathcal{D}_2)\\
            \text{s.t.}~~ &\mathrm{Vol}_{\Omega_2}(\mathcal{D}_1,\mathcal{D}_2)=1/4,\\
            &\mathrm{Vol}_{\Omega_3}(\mathcal{D}_1,\mathcal{D}_2)=1/4,\\
            &a(\boldsymbol{u},\boldsymbol{v})=l(\boldsymbol{v}),~\forall \boldsymbol{v}\in V.
          \end{aligned}
\end{equation}
The second is maximum bulk modulus subject to macroscopic isotropy constraints and volume constraints on $\Omega_2$ and $\Omega_3$ given by
\begin{equation}\label{eqn: multiphase example iso}
    \begin{aligned}
            \underset{\mathcal{D}_1,\mathcal{D}_2\in\mathcal{U}_{\mathrm{ad}}}{\min}~
              &-\bar{\kappa}(\mathcal{D}_1,\mathcal{D}_2)\\
            \text{s.t.}~~ &\mathrm{Vol}_{\Omega_2}(\mathcal{D}_1,\mathcal{D}_2)=1/4,\\
            &\mathrm{Vol}_{\Omega_3}(\mathcal{D}_1,\mathcal{D}_2)=1/4,\\
            &C_i(\mathcal{D}_1,\mathcal{D}_2)=0,~i=1,\dots,6,\\
            &a(\boldsymbol{u},\boldsymbol{v})=l(\boldsymbol{v}),~\forall \boldsymbol{v}\in V,
          \end{aligned}
\end{equation}
where the constraints $C_{i}$ are as in Example 2. For SLP we use the single anisotropy constraint and for the Hilbertian projection method we use the full set of isotropy constraints. For these examples we use $\gamma_{\mathrm{max}}=0.05$ so that the optimiser does not evolve the designs too quickly. It should be noted that for the case of only volume constraints the extended constraint shape sensitivities have a nullity of one (in $\boldsymbol{\theta}_1$) and zero (in $\boldsymbol{\theta}_2$) owing to the structure of the shape derivatives for the volume constraints. For the case of volume and isotropy constraints the extended constraint shape sensitivities have a nullity of three (in $\boldsymbol{\theta}_1$) and two (in $\boldsymbol{\theta}_2$) due to the underlying symmetry of the shape derivatives for the volume and isotropy constraints. Our method handles these with no special treatment.

We initialise with two overlapping level set functions that give a starting structure completely comprised of the less stiff material ($\Omega_3$) and the void phase. Regions of stiffer material ($\Omega_2$) are readily generated during the optimisation via independent evolution of the two level set functions. We use a starting structure of four equally spaced holes for the optimisation problem without isotropy constraints and nine equally spaced holes for the problem with isotropy constraints.

\begin{figure*}[p]
    \centering
    \begin{minipage}{.2\linewidth}
    \begin{subfigure}{1.1\linewidth}
        \includegraphics[width=\textwidth]{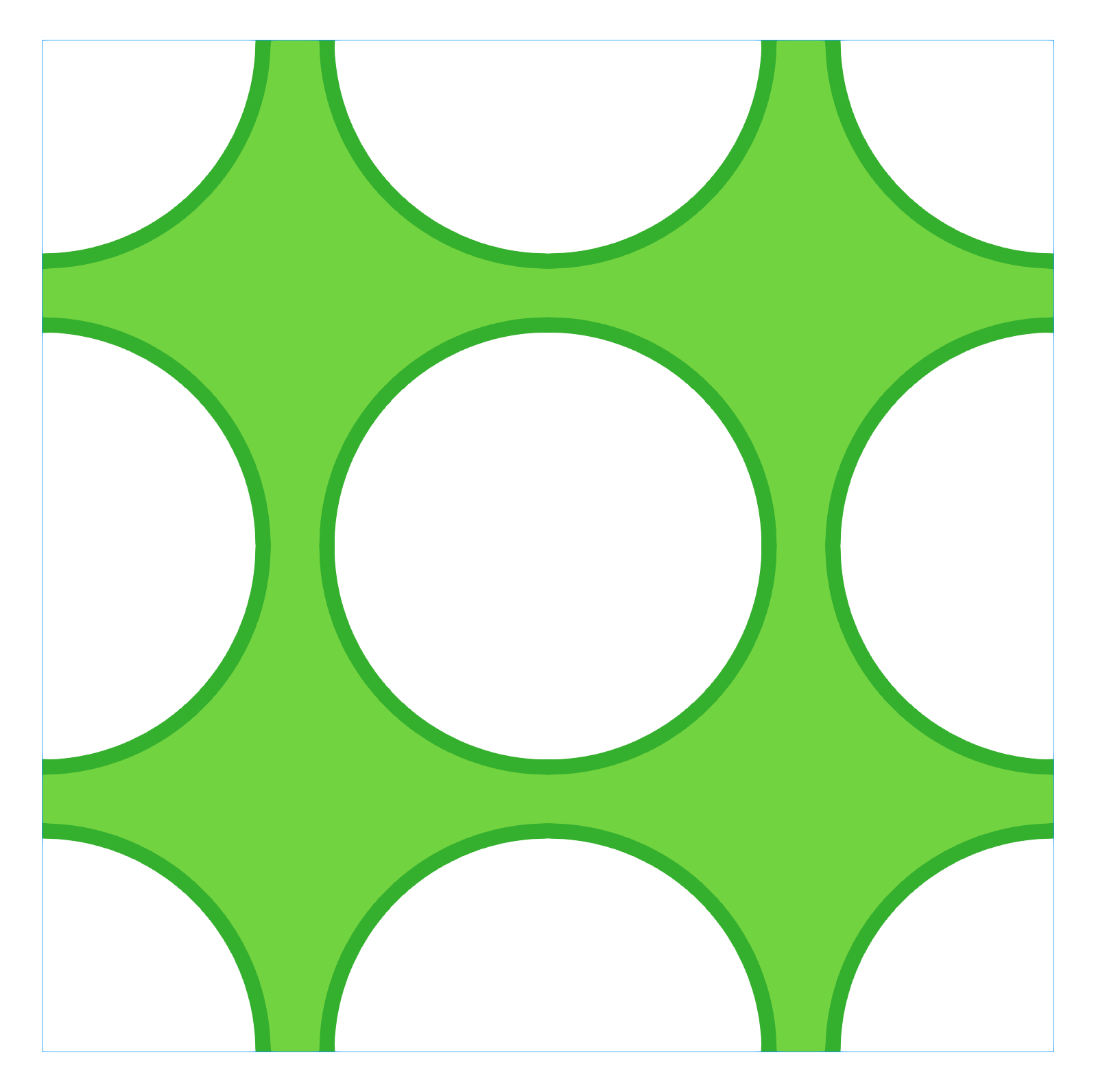}
        \caption{}
        \label{fig:Fig 7a}
    \end{subfigure}
    \end{minipage}
    \Huge{$\mathbf{\rightarrow}$}
    \begin{minipage}{.72\linewidth}
    \centering
    \begin{subfigure}{0.33\linewidth}
        \includegraphics[width=\textwidth]{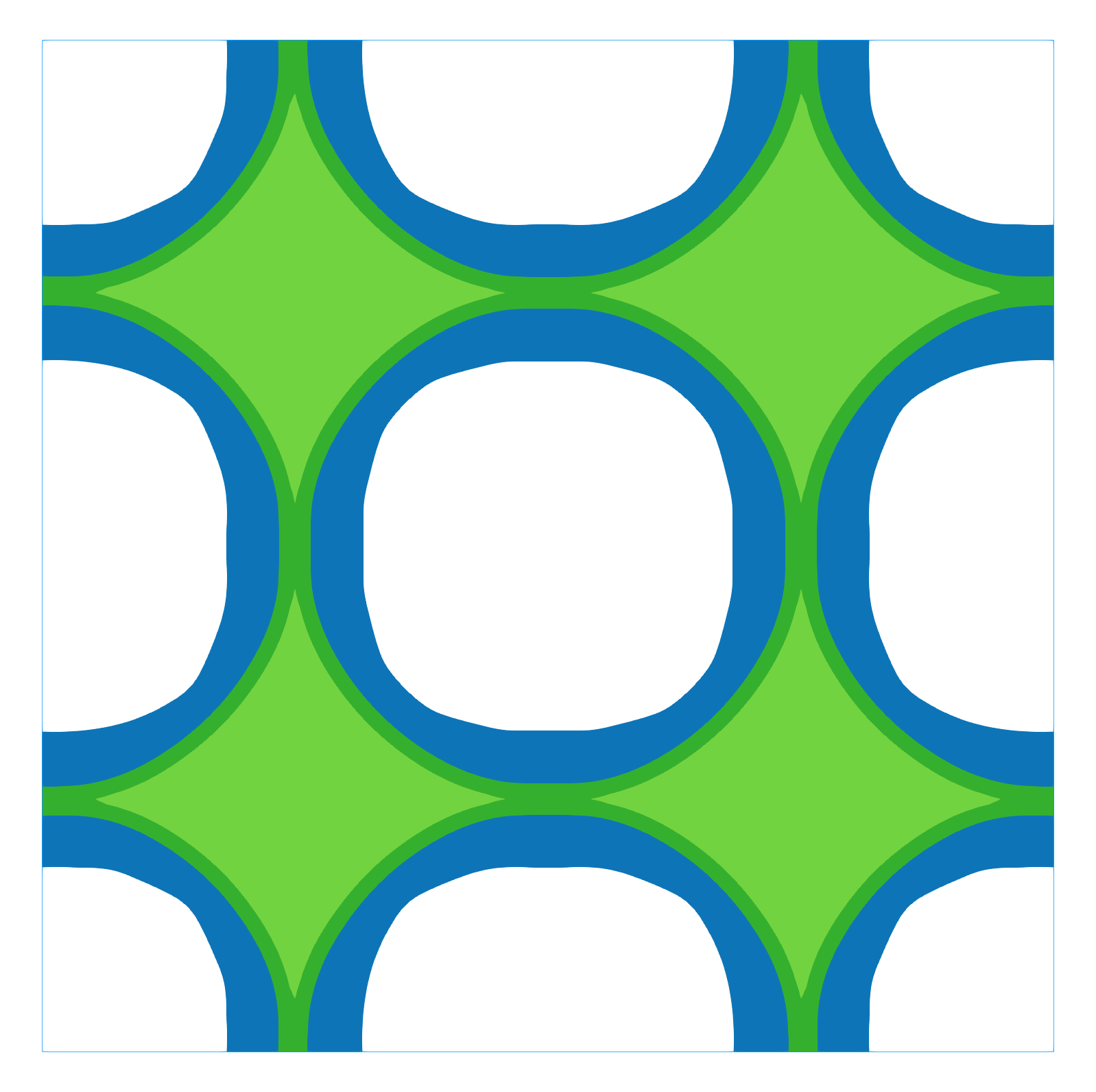}
        \caption{}
        \label{fig:Fig 7b}
    \end{subfigure}\hspace{2em}
    \begin{subfigure}{0.33\linewidth}
        \includegraphics[width=\textwidth]{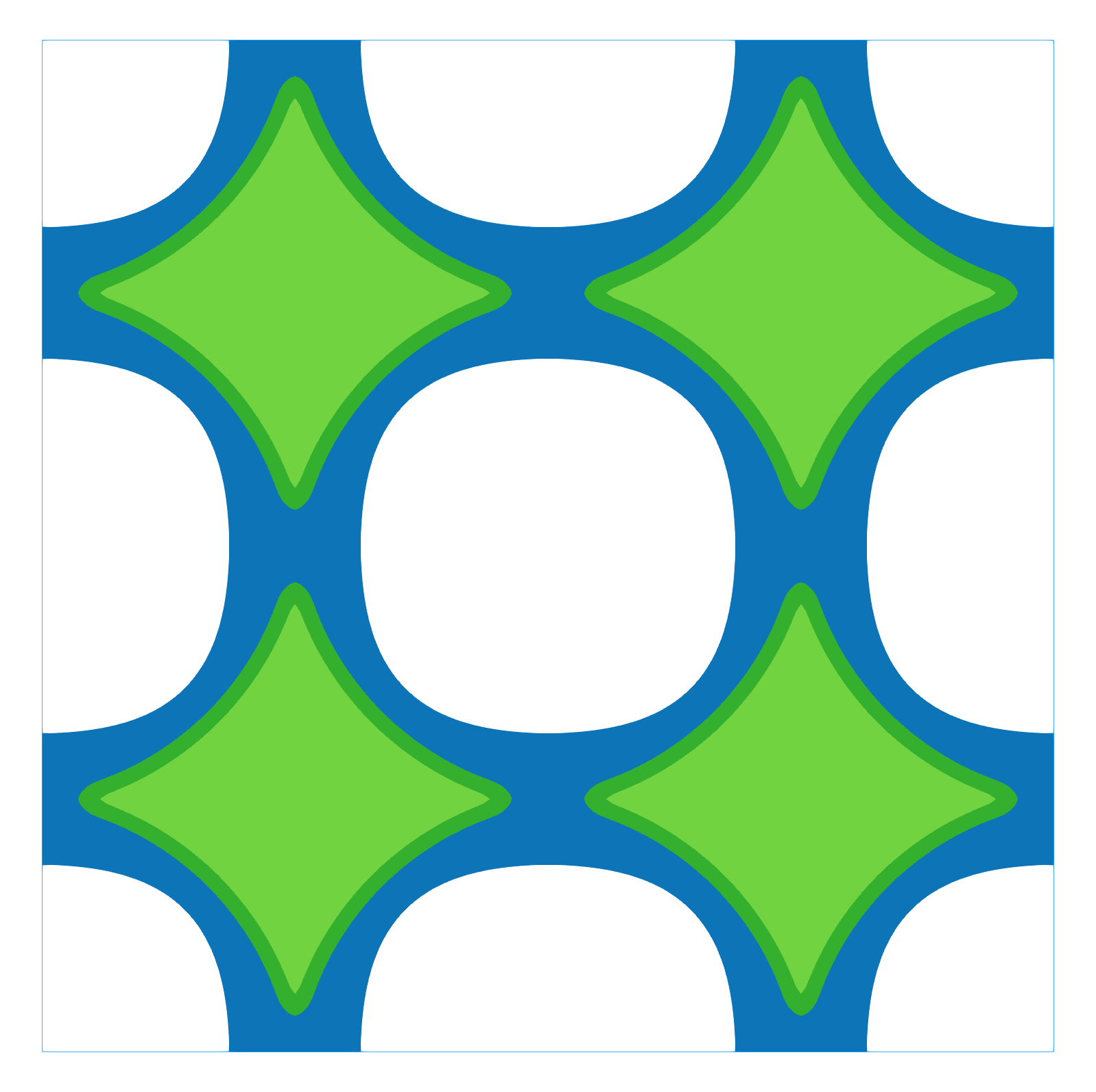}
        \caption{}
        \label{fig:Fig 7c}
    \end{subfigure}
    \begin{subfigure}{0.48\linewidth}
        \includegraphics[width=\textwidth]{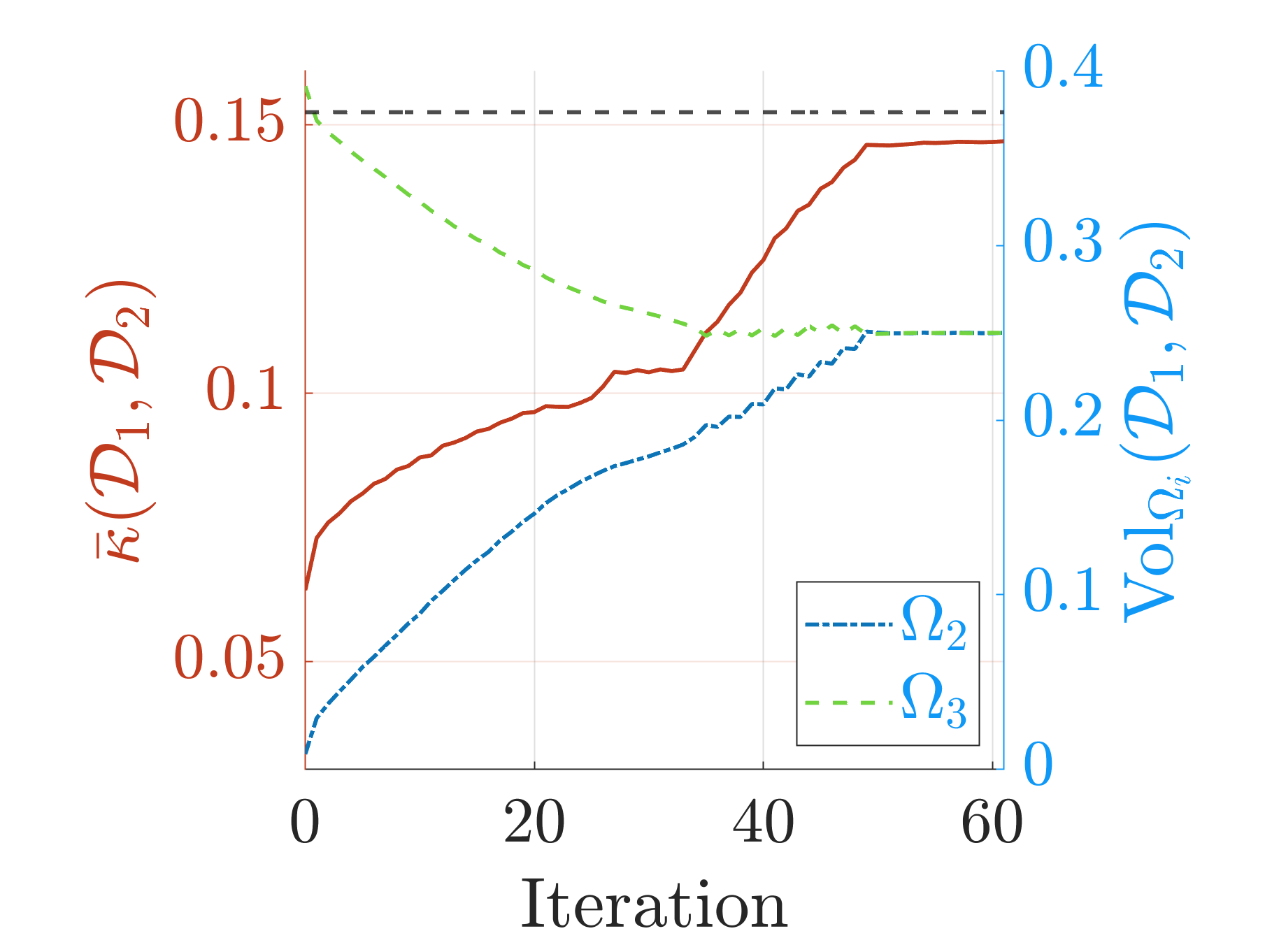}
        \caption{}
        \label{fig:Fig 7d}
    \end{subfigure}
    \begin{subfigure}{0.48\linewidth}
        \includegraphics[width=\textwidth]{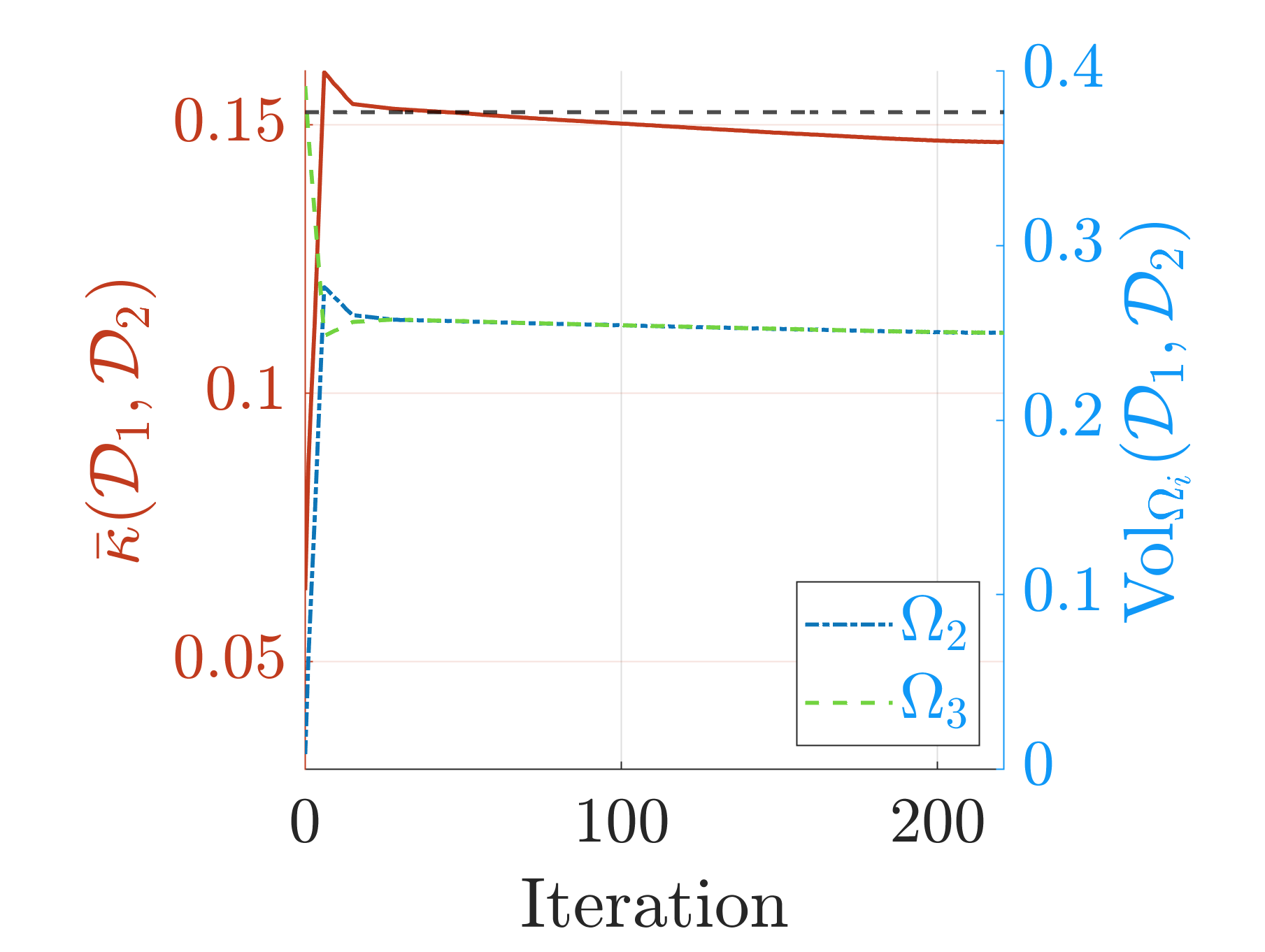}
        \caption{}
        \label{fig:Fig 7e}
    \end{subfigure}
    \end{minipage}
    \caption{Optimisation results for Example 4: maximum bulk modulus multi-phase materials without isotropy constraints. For the starting structure (a), (b) and (c) show the final structures for the Hilbertian projection method and SLP respectively, while (d) and (e) show the respective iteration histories. The Hashin-Shtrikman-Walpole upper bound for the bulk modulus is given by the dashed black line. Note that the volume constraint for each phase is 0.25.}
    \label{fig:Fig 7}

    \centering
    \begin{minipage}{.2\linewidth}
    \begin{subfigure}{1.1\linewidth}
        \includegraphics[width=\textwidth]{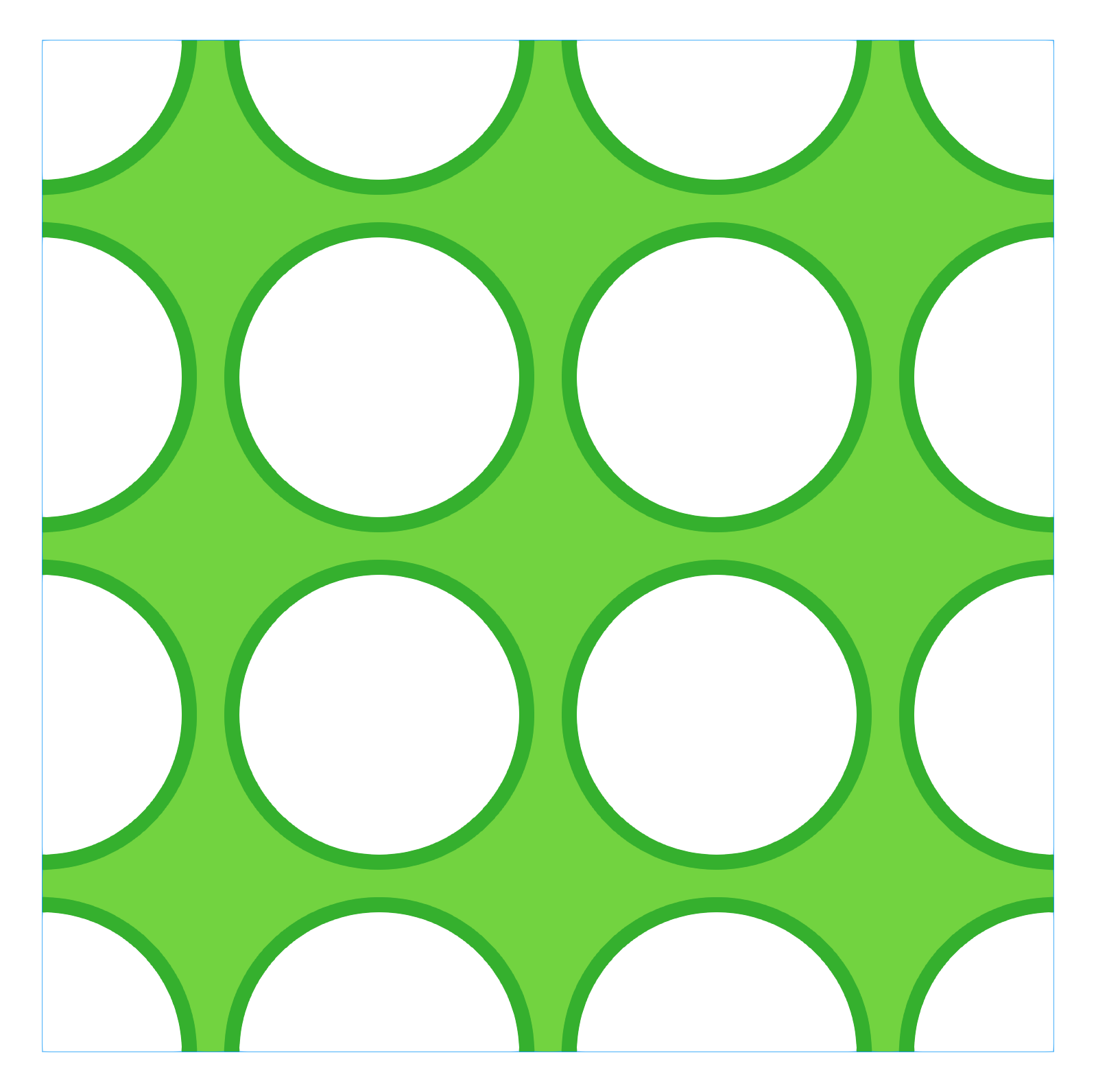}
        \caption{}
        \label{fig:Fig 8a}
    \end{subfigure}
    \end{minipage}
    \Huge{$\mathbf{\rightarrow}$}
    \begin{minipage}{.72\linewidth}
    \centering
    \begin{subfigure}{0.33\linewidth}
        \includegraphics[width=\textwidth]{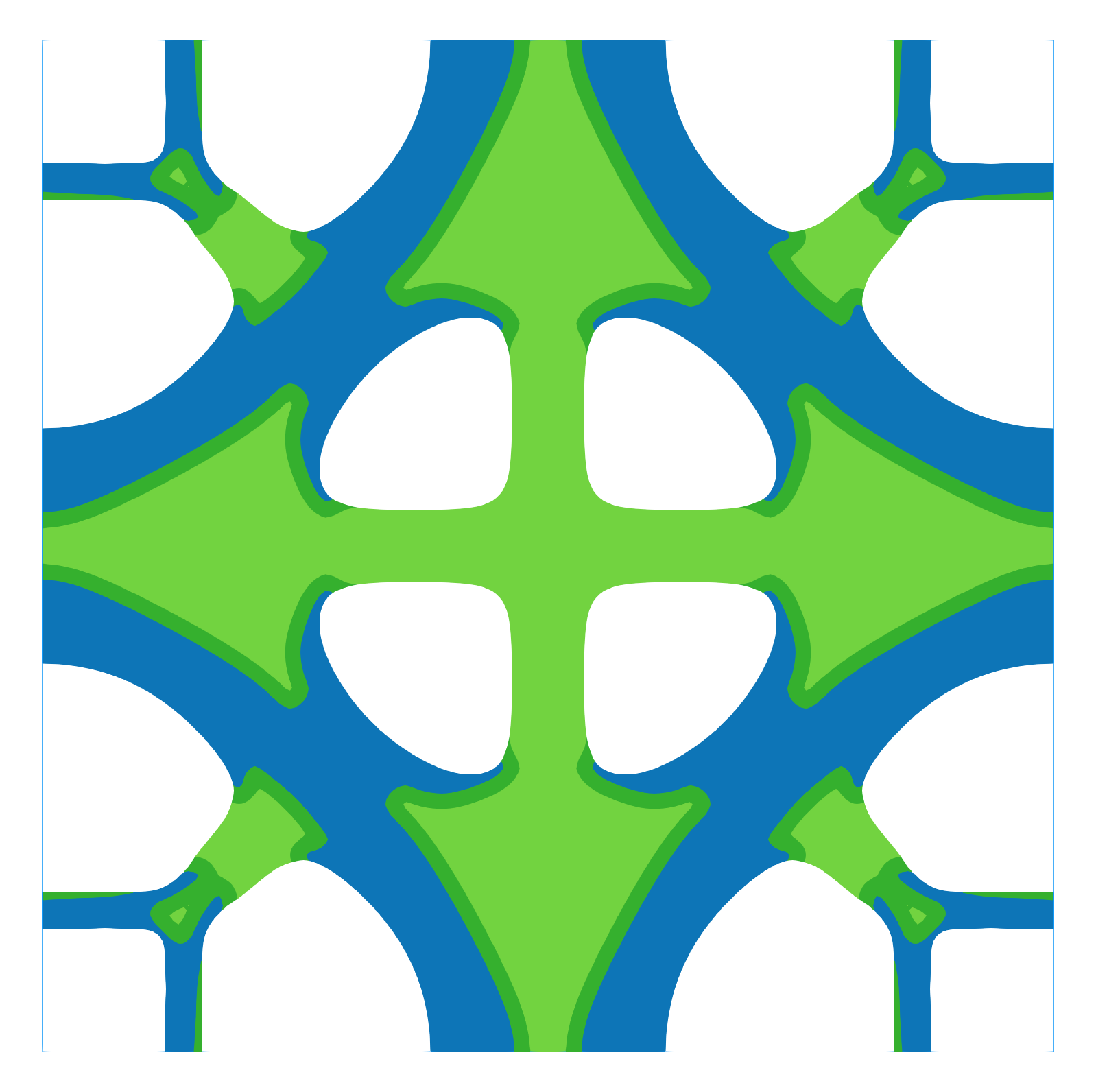}
        \caption{}
        \label{fig:Fig 8b}
    \end{subfigure}\hspace{2em}
    \begin{subfigure}{0.33\linewidth}
        \includegraphics[width=\textwidth]{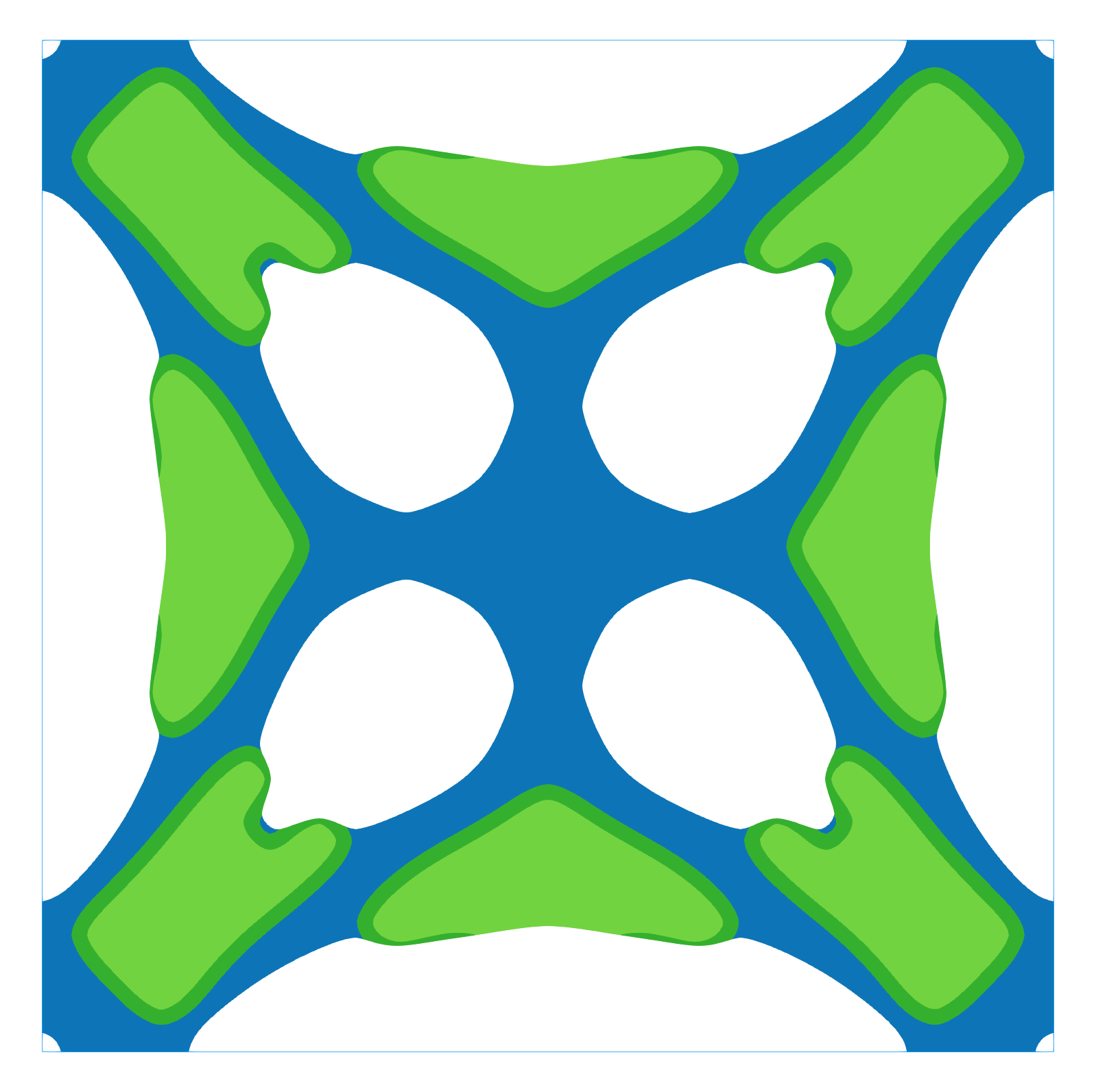}
        \caption{}
        \label{fig:Fig 8c}
    \end{subfigure}
    \begin{subfigure}{0.48\linewidth}
        \includegraphics[width=\textwidth]{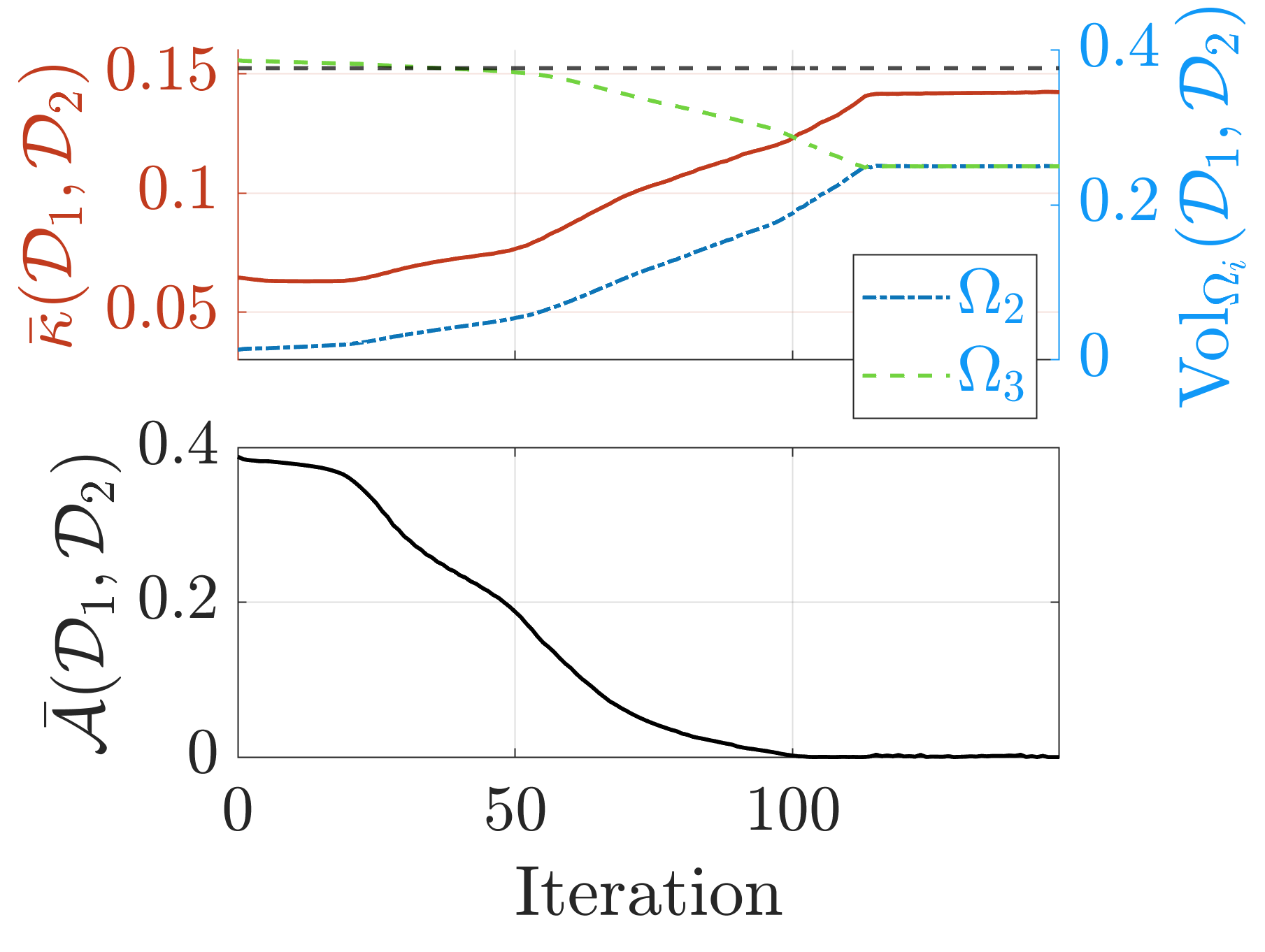}
        \caption{}
        \label{fig:Fig 8d}
    \end{subfigure}
    \begin{subfigure}{0.48\linewidth}
        \includegraphics[width=\textwidth]{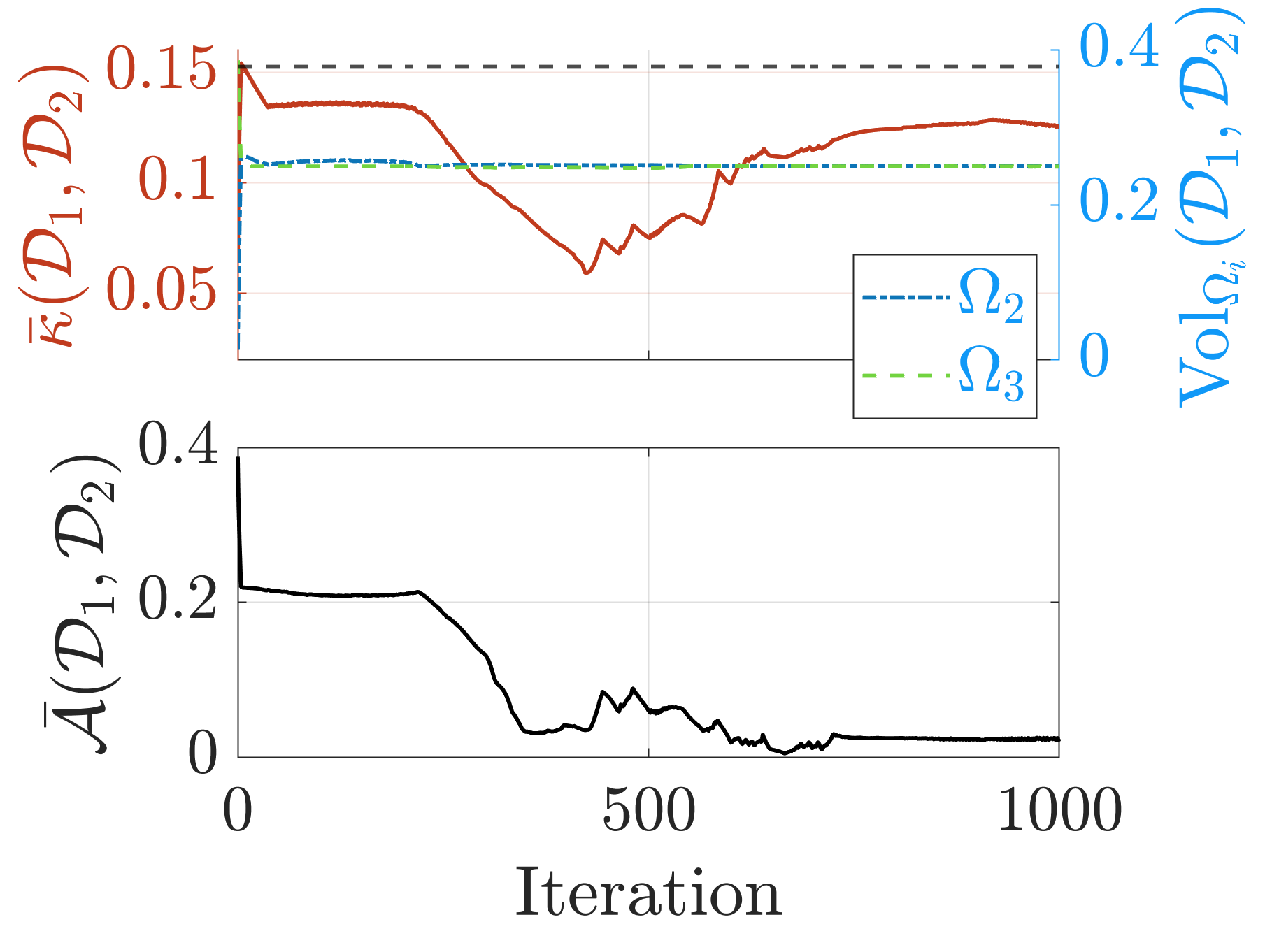}
        \caption{}
        \label{fig:Fig 8e}
    \end{subfigure}
    \end{minipage}
    \caption{Optimisation results for Example 4: maximum bulk modulus multi-phase materials with isotropy constraints. For the starting structure (a), (b) and (c) show the final structures for the Hilbertian projection method and SLP respectively, while (d) and (e) show the respective iteration histories. The Hashin-Shtrikman-Walpole upper bound for the bulk modulus is given by the dashed black line. Note that the volume constraint for each phase is 0.25.}
    \label{fig:Fig 8}
\end{figure*}

Figures \ref{fig:Fig 7} and \ref{fig:Fig 8} show the optimisation results and history without and with isotropy constraints, respectively. We denote the stiff and less stiff material phase by blue and green, respectively, while the smooth interface is given by the dark green overlap. We include the Hashin-Shtrikman-Walpole (HSW) upper bound \citep{WALPOLE1966151} for this problem as the black dashed line in these figures. Table \ref{tab:summary eg. 3} shows a summary of the results.

\begin{table*}[t]
    \centering
    \caption{Summary of optimisation results for Example 4: two-dimensional maximum bulk modulus multi-phase materials. The asterisk denotes failure to converge.}
    \label{tab:summary eg. 3}
    \setlength{\tabcolsep}{0.375em} 
    \begin{tabular}{c|c|c|c|c|c|c|c|c}
       Method & Iso. Const. Type & Fig. & $\bar{\kappa}$ & HSW bound & $\bar{\mathcal{A}}$ & $\operatorname{Vol}_{\Omega_2}$ & $\operatorname{Vol}_{\Omega_3}$ & Iters. \\\hline
        Proj. & N.A. & \ref{fig:Fig 7a}/\ref{fig:Fig 7d} & 0.1469 &   0.1524 & N.A. & 0.2501 & 0.2500 & 61\\
        SLP & N.A. & \ref{fig:Fig 7b}/\ref{fig:Fig 7e} & 0.1468  &  0.1524 & N.A. & 0.2500  &  0.2501 & 221\\
        Proj. & Full set & \ref{fig:Fig 8b}/\ref{fig:Fig 8d} & 0.1424  &  0.1524 & 0.0001  &  0.2500 &   0.2500 & 148\\
        SLP & Aniso. Meas. & \ref{fig:Fig 8c}/\ref{fig:Fig 8e} & 0.1254 &   0.1524 & 0.0236  &  0.2506  &  0.2497 &1000*
    \end{tabular}
\end{table*}

For the case of no isotropy constraints, both the Hilbertian projection method and SLP converge to roughly 96\% of the HSW bound while the resulting structures (Fig. \ref{fig:Fig 7b} and \ref{fig:Fig 7c}) are geometrically similar apart from the thin interface that presents in the results for the Hilbertian projection method. The difference between our results and the theoretical upper bound is likely due to the use of the {approximate} formula for the shape derivative. Indeed, results from \cite{10.1051/cocv/2013076_2014} showed that the {approximate} formula yields slightly less optimal results than the {true} or {Jacobian-free} counterpart. The iteration history for the Hilbertian projection method (Fig. \ref{fig:Fig 7d}) is fairly smooth while the history for SLP (Fig. \ref{fig:Fig 7e}) moves rapidly at the beginning of the optimisation to satisfy the volume constraints and increase the objective. As previously noted, this is likely due to the trust region constraints that, in this case, need to be chosen to be more conservative.

With the addition of isotropy constraints, we again find that the Hilbertian projection method works well without significant parameter tuning. In 148 iterations the optimisation algorithm is able to satisfy all constraints and obtain a final objective value that is 93.44\% of the HS bound. In contrast, SLP does not converge within 1000 iterations.

\section{Conclusion}\label{sec: concl}
In this paper we have presented a Hilbertian extension of the projection method for constrained level set-based topology optimisation. At its core, the method relies on the Hilbertian extension-regularisation method in which a set of identification problems are solved over a Hilbert space $H$ on $D$ with inner product $\langle\cdot,\cdot\rangle_{H}$. This procedure naturally extends shape sensitivities onto the bounding domain $D$ and enriches them with the regularity of $H$. For a constrained optimisation problem the projection method framework aims for the best first-order improvement of the objective in addition to first-order improvement of the constraints. These requirements for the projection method may then be reposed in the Hilbertian framework in terms of $H$ and $\langle\cdot,\cdot\rangle_{H}$. We satisfy these reposed requirements by defining the normal velocity of the level set function as a linear combination of the orthogonal projection operator applied to the extended objective shape sensitivity and basis functions for the extended constraint shape sensitivities. Owing to the Hilbertian extension-regularisation of shape sensitivities, the chosen normal velocity is already extended onto the bounding domain $D$ and endowed with the regularity of $H$.

To demonstrate the Hilbertian projection method for constrained level set-based topology optimisation we have solved several example microstructure optimisation problems with multiple constraints. We showed that the Hilbertian projection method successfully handled all of these optimisation problems with little-to-no tuning of the parameter $\alpha_\mathrm{min}$ that controls the balance between improving the objective and constraints. The Hilbertian projection method also naturally handles linearly dependent constraint shape sensitivities. Such linear dependencies often appear in microstructure optimisation and multi-phase optimisation problems.

We found that our method performs well when compared to a Hilbertian sequential linear programming (SLP) method \citep{10.1016/bs.hna.2020.10.004_978-0-444-64305-6_2021}. For problems only involving volume constraint(s), both methods converged to appropriate optimised microstructures. However, SLP did not successfully solve some of the more complex example optimisation problems. These results demonstrate the capacity of the Hilbertian projection method and likely other projection/null space methods \cite[e.g.,][]{10.3934/dcdsb.2019249,10.1051/cocv/2020015_2020} for solving constrained level set-based topology optimisation problems.

Applying multiple constraints is challenging in level set-based topology optimisation owing to the reliance on implicitly defined domains. Alongside other recent work \citep{10.3934/dcdsb.2019249,10.1051/cocv/2020015_2020, 10.1016/bs.hna.2020.10.004_978-0-444-64305-6_2021}, our proposed Hilbertian projection method makes significant progress towards improving the capacity of conventional level set-based methods for constrained topology optimisation. Furthermore, due to its generality, the Hilbertian projection method is not confined to microstructure optimisation. It may be applied to any topology optimisation problem to be solved in a level-set framework, including macroscopic or multi-physics problems. Inequality constraints could likely be incorporated into the method using slack variables. In addition, the method is not confined to Eulerian level set methods and can be readily applied to Lagrangian or body-fitted level set methods \cite[e.g.,][]{10.1016/j.cma.2014.08.028_2014}. These extensions could be considered in future work. 

\backmatter

\bmhead{Acknowledgments}
This work was supported by the Australian Research Council through the Discovery Grant scheme (DP220102759). Computational resources used in this work were provided by the eResearch Office, Queensland University of Technology. The first author is supported by a QUT Postgraduate Research Award and a Supervisor Top-Up Scholarship. The authors would also like to thank the anonymous reviewers for their constructive comments that have resulted in improvements to the manuscript.

\bmhead{Conflict of interest}
The authors have no competing interests to declare that are relevant to the content of this article.

\bmhead{Replication of Results}
As this work is a part of a new project, we do not provide the source code. However, we do provide Algorithm \ref{alg:hpm} to help readers reproduce results. In addition, we provide all parameter values along with an iteration history for all problems. Interested readers can contact the authors for further information. 





\begin{appendices}

\section{Proof of Lemma 2}\label{secA1}
We proceed via C\'ea's method \citep{10.1051/m2an/1986200303711_1986}: suppose we define the Lagrangian $\mathcal{L}$ to be
\begin{align}
    \mathcal{L}(\Omega,\tilde{\boldsymbol{u}}^{(ij)},\tilde{\boldsymbol{u}}^{(kl)})&\nonumber\\*&\hspace{-2cm}=\int_\Omega{C_{pqrs}({\varepsilon}_{pq}(\tilde{\boldsymbol{u}}^{(ij)})+\bar{\varepsilon}_{pq}^{(ij)})\bar{\varepsilon}_{rs}^{(kl)}}~\mathrm{d}{\Omega}\nonumber\\*&\hspace{-2cm}+\int_\Omega C_{pqrs}(\varepsilon_{pq}(\tilde{\boldsymbol{u}}^{(ij)})+\bar{\varepsilon}_{pq}^{(ij)}){\varepsilon}_{rs}(\tilde{\boldsymbol{u}}^{(kl)})~\mathrm{d}{\Omega}.
\end{align}
We do not include auxiliary fields as it turns out that the problem is self-adjoint.

Under a variation $\varr{\boldsymbol{u}}^{(ij)}$ of $\tilde{\boldsymbol{u}}^{(ij)}$ with $ij\neq kl$, the corresponding variation of $\mathcal{L}$ is
\begin{align*}
    &\frac{\delta\mathcal{L}}{\varr{\boldsymbol{u}}^{(ij)}}=\int_\Omega{C_{pqrs}\varr{u}_{p,q}^{(ij)}\bar{\varepsilon}_{rs}^{(kl)}}~\mathrm{d}{\Omega}\\&\hspace{1.5cm}+\int_\Omega{C_{pqrs}\varr{u}_{p,q}^{(ij)}{\varepsilon}_{rs}(\tilde{\boldsymbol{u}}^{(kl)})}~\mathrm{d}{\Omega}\\&~=\int_\Omega{C_{pqrs}(\bar{\varepsilon}_{rs}^{(kl)}+{\varepsilon}_{rs}(\tilde{\boldsymbol{u}}^{(kl)}))\varr{u}_{p,q}^{(ij)}}~\mathrm{d}{\Omega}\\&~=\int_\Omega{\sigma_{pq}^{(kl)}\varr{u}_{p,q}^{(ij)}}~\mathrm{d}{\Omega}\\&~=-\int_\Omega{\sigma_{pq,q}^{(kl)}\varr{u}_{p}^{(ij)}}~\mathrm{d}{\Omega}+\int_{\partial\Omega}{\sigma_{pq}^{(kl)}n_q\varr{u}_{p}^{(ij)}}~\mathrm{d}{\Gamma}
\end{align*}%
where symmetry of material coefficients has been used along with integration by parts. Requiring $\mathcal{L}$ to be stationary gives the state equations for stress under loading $\bar{\varepsilon}_{rs}^{kl}$. In particular, allowing arbitrary $\varr{u}_p^{(ij)}$ within $\Omega$ gives $\sigma_{pq,q}^{(kl)}=0$ in $\Omega$ and allowing arbitrary $\varr{u}_p^{(ij)}$ on $\partial\Omega$ gives $\sigma_{pq}^{(kl)}n_q=0$ on $\partial\Omega$.

Next, under a variation $\varr{\boldsymbol{u}}^{(kl)}$ of $\tilde{\boldsymbol{u}}^{(kl)}$ with $ij\neq kl$, the corresponding variation of $\mathcal{L}$ is
\begin{align*}
    &\frac{\delta\mathcal{L}}{\varr{\boldsymbol{u}}^{(kl)}}=\int_\Omega{C_{pqrs}({\varepsilon}_{pq}(\tilde{\boldsymbol{u}}^{(ij)})+\bar{\varepsilon}_{pq}^{(ij)})\varr{u}_{r,s}^{(kl)}}~\mathrm{d}{\Omega}\\
    &~=\int_\Omega{\sigma_{rs}^{(ij)}\varr{u}_{r,s}^{(kl)}}~\mathrm{d}{\Omega}\\
    &~=-\int_\Omega{\sigma_{rs,s}^{(ij)}\varr{u}_{r}^{(kl)}}~\mathrm{d}{\Omega}+\int_{\partial\Omega}{\sigma_{rs}^{(ij)}n_s\varr{u}_{r}^{(kl)}}~\mathrm{d}{\Gamma}
\end{align*}%
where symmetry of material coefficients has been used along with integration by parts. Requiring $\mathcal{L}$ to be stationary gives the state equations for stress under loading $\bar{\varepsilon}_{pq}^{(ij)}$. It should be noted that when $ij=kl$, we can apply the product rule which results in the state equations for stress under a constant macroscopic strain $\bar{\varepsilon}_{pq}^{(ij)}$.

Together, the above results give the state equations for the fields $\tilde{\boldsymbol{u}}^{(ij)}$ and $\tilde{\boldsymbol{u}}^{(kl)}$, as required. 

We also require that the Lagrangian $\mathcal{L}$ equals the objective at the solution to the state equations. Indeed, at the solution to the state equations we obtain
\begin{equation}
    \mathcal{L}(\Omega) = \bar{C}_{ijkl}(\Omega),
\end{equation}
as required.

The shape derivative of $\mathcal{L}$ at fixed $\tilde{\boldsymbol{u}}^{(ij)}$ and $\tilde{\boldsymbol{u}}^{(kl)}$ can then be calculated using Lemma \ref{lemma: omega int scalar} to be
\begin{equation}
\begin{aligned}
    C^{\prime}(\Omega)(\boldsymbol{\theta})&=\mathcal{L}^{\prime}(\Omega)(\boldsymbol{\theta})\rvert_{\tilde{\boldsymbol{u}}^{(ij)},\tilde{\boldsymbol{u}}^{(kl)}}\\
    &= \int_{\partial\Omega} C_{pqrs}({\varepsilon}_{pq}(\tilde{\boldsymbol{u}}^{(ij)})+\bar{\varepsilon}_{pq}^{(ij)})\\&\quad\times({\varepsilon}_{rs}(\tilde{\boldsymbol{u}}^{(kl)})+\bar{\varepsilon}_{rs}^{(kl)})~\boldsymbol{\theta}\cdot\boldsymbol{n}~\mathrm{d}{\Gamma}.
\end{aligned}
\end{equation}

\section{Proof of Lemma 3}\label{secA2}
Similarly to Appendix \ref{secA1}, suppose we define the Lagrangian $\mathcal{L}$ to be
\begin{align}
    \mathcal{L}(\mathcal{D}_1,\mathcal{D}_2,\tilde{\boldsymbol{u}}^{(ij)},\tilde{\boldsymbol{u}}^{(kl)})&\nonumber\\*&\hspace{-2.5cm}=\int_D{C_{pqrs}({\varepsilon}_{pq}(\tilde{\boldsymbol{u}}^{(ij)})+\bar{\varepsilon}_{pq}^{(ij)})\bar{\varepsilon}_{rs}^{(kl)}}~\mathrm{d}{\Omega}\nonumber\\*&\hspace{-2.5cm}+\int_D C_{pqrs}(\varepsilon_{pq}(\tilde{\boldsymbol{u}}^{(ij)})+\bar{\varepsilon}_{pq}^{(ij)}){\varepsilon}_{rs}(\tilde{\boldsymbol{u}}^{(kl)})~\mathrm{d}{\Omega}.
\end{align}
where $C_{pqrs}=C_{pqrs}(d_{\mathcal{D}_1},d_{\mathcal{D}_2})$. As previously, stationarity of $\mathcal{L}$ under variations $\varr{\boldsymbol{u}}^{(ij)}$ and ${\varr{\boldsymbol{u}}^{(kl)}}$ retrieve the state equations and at the solution to the state equations $\mathcal{L}(\Omega) = \bar{C}_{ijkl}(\Omega)$. The given Lagrangian therefore satisfies the requirements for C\'ea's method.

Using differentiability of the signed distance function $d_{\mathcal{D}_i}$ \cite[Lemma 2.4, Proposition 2.5,][]{10.1051/cocv/2013076_2014} and the chain rule, we have
\begin{equation}
\begin{aligned}
    &\bar{C}_{ijkl}^{\prime}(\mathcal{D}_1,\mathcal{D}_2)(\boldsymbol{\theta}_1)\\&~=\int_{D}d_{\mathcal{D}_1}^\prime(\boldsymbol{\theta}_1)\frac{\partial C_{pqrs}}{\partial d_{\mathcal{D}_1}}({\varepsilon}_{pq}(\tilde{\boldsymbol{u}}^{(ij)})+\bar{\varepsilon}_{pq}^{(ij)})\\&\hspace{2.6cm}\times({\varepsilon}_{rs}(\tilde{\boldsymbol{u}}^{(kl)})+\bar{\varepsilon}_{rs}^{(kl)})~\mathrm{d}{\Omega}
\end{aligned}
\end{equation}
where $d_{\mathcal{D}_i}^\prime$ is the shape derivative of the signed distance function for $\boldsymbol{x}\in D\setminus\Sigma$ given by 
\begin{equation}
    d^\prime_{\mathcal{D}_1}(\boldsymbol{x})=-\boldsymbol{\theta}(p_{\partial\mathcal{D}_1}(\boldsymbol{x}))\cdot\boldsymbol{n}(p_{\partial\mathcal{D}_1}(\boldsymbol{x}))
\end{equation}
where $p_{\partial\mathcal{D}_1}(\boldsymbol{x})$ is the projection of a point $x$ onto the boundary $\partial\mathcal{D}_1$ and $\Sigma$ is the set of points in the skeleton of $\partial\mathcal{D}_1$ \cite[Definition 2.3,][]{10.1051/cocv/2013076_2014}.

Using this and the {Jacobian-free} coarea formula \cite[Corollary 2.13, Equation 2.15,][]{10.1051/cocv/2013076_2014} results in
\begin{equation}\label{eqn: jacobfree}
\begin{aligned}
    &\bar{C}_{ijkl}^{\prime}(\mathcal{D}_1,\mathcal{D}_2)(\boldsymbol{\theta}_1)\\&~=-\int_{\partial\mathcal{D}_1}  \boldsymbol{\theta}_1\cdot\boldsymbol{n}\int_{\mathrm{ray}_{\partial\mathcal{D}_1}(\boldsymbol{x})\cap D} H^\prime_\eta(d_{\mathcal{D}_1})\frac{\partial C_{pqrs}}{\partial g}\\&~~~\times({\varepsilon}_{pq}(\tilde{\boldsymbol{u}}^{(ij)})+\bar{\varepsilon}_{pq}^{(ij)})({\varepsilon}_{rs}(\tilde{\boldsymbol{u}}^{(kl)})+\bar{\varepsilon}_{rs}^{(kl)})~\mathrm{d}{z}~\mathrm{d}{\Gamma}
\end{aligned}
\end{equation}
where $g(x)=H_\eta(x)$. 

Finally, the support of $H^\prime_\eta(x)$ is $\lvert x\rvert<2\eta$, so the integral over $\mathrm{ray}_{\partial\mathcal{D}_1}(\boldsymbol{x})\cap D$ is restricted to a tubular region about $\partial\mathcal{D}_1$ \citep{10.1051/cocv/2013076_2014}. Therefore, for small $\eta$, we may assume that 
\begin{align}
    \varepsilon_{pq}(\tilde{\boldsymbol{u}}^{(ij)})(\boldsymbol{z})&\approx\varepsilon_{pq}(\tilde{\boldsymbol{u}}^{(ij)})(\boldsymbol{y})
    \intertext{and}
    d_{\mathcal{D}_2}(\boldsymbol{z})&\approx d_{\mathcal{D}_2}(\boldsymbol{y})
\end{align}
where $\boldsymbol{z}\in \mathrm{ray}_{\partial\mathcal{D}_1}(\boldsymbol{x})\cap D$ and $\boldsymbol{y}\in\partial\mathcal{D}_1$. In addition, the derivative $\frac{\partial C_{pqrs}}{\partial g}(H_\eta(d_{\mathcal{D}_1}))$ is independent of $d_{\mathcal{D}_1}$ by Equation \ref{eqn: multiphase interp}. Equation \ref{eqn: jacobfree} can therefore be written as
\begin{equation}
\begin{aligned}
    &\bar{C}_{ijkl}^{\prime}(\mathcal{D}_1,\mathcal{D}_2)(\boldsymbol{\theta}_1)\\&~\approx-\int_{\partial\mathcal{D}_1}  \boldsymbol{\theta}_1\cdot\boldsymbol{n}~\frac{\partial C_{pqrs}}{\partial g}\left({\varepsilon}_{pq}(\tilde{\boldsymbol{u}}^{(ij)})+\bar{\varepsilon}_{pq}^{(ij)}\right)\\&\quad\quad\quad\quad\times\left({\varepsilon}_{rs}(\tilde{\boldsymbol{u}}^{(kl)})+\bar{\varepsilon}_{rs}^{(kl)}\right)\\&\quad\quad\quad\quad\times\left[\int_{\mathrm{ray}_{\partial\mathcal{D}_1}(\boldsymbol{x})\cap D} H^\prime_\eta(d_{\mathcal{D}_1})~\mathrm{d}{z}\right]~\mathrm{d}{\Gamma}
\end{aligned}
\end{equation}
Finally, it can be shown using elementary vector calculus that
\begin{equation}
    \int_{\mathrm{ray}_{\partial\mathcal{D}_1}(\boldsymbol{x})\cap D} H^\prime_\eta(d_{\mathcal{D}_1}(z))~\mathrm{d}{z} = 1,
\end{equation}
which completes this portion of the proof.

For Equation \ref{eqn: volume 1}, \cite[Corollary 2.8,][]{10.1051/cocv/2013076_2014} and the chain rule gives
\begin{equation}
\begin{aligned}
    &\operatorname{Vol}^{\prime}_{\Omega_1}(\mathcal{D}_1,\mathcal{D}_2)(\boldsymbol{\theta}_1)\\&~=\int_{D} d_{\mathcal{D}_1}^\prime(\boldsymbol{\theta}_1) H^\prime_{\eta}(d_{\mathcal{D}_1}) H_{\eta}(d_{\mathcal{D}_2})~\mathrm{d}\Omega.
\end{aligned}
\end{equation}
As previously, shape differentiability of $d_{\mathcal{D}_i}$ along with the {Jacobian-free} coarea formula gives
\begin{equation}
\begin{aligned}
    &\operatorname{Vol}^{\prime}_{\Omega_1}(\mathcal{D}_1,\mathcal{D}_2)(\boldsymbol{\theta}_1)=\int_{\partial\mathcal{D}_1}  \boldsymbol{\theta}_1\cdot\boldsymbol{n}\\&~\times\left[\int_{\mathrm{ray}_{\partial\mathcal{D}_1}(\boldsymbol{x})\cap D} H^\prime_{\eta}(d_{\mathcal{D}_1}) H_{\eta}(d_{\mathcal{D}_2})~\mathrm{d}s\right]~\mathrm{d}\Gamma.
\end{aligned}
\end{equation}
The prior approximations then give the result
\begin{equation}
    \operatorname{Vol}^{\prime}_{\Omega_1}(\mathcal{D}_1,\mathcal{D}_2)(\boldsymbol{\theta}_1)\approx\int_{\partial\mathcal{D}_1}  \boldsymbol{\theta}_1\cdot\boldsymbol{n}~H_{\eta}(d_{\mathcal{D}_2})\mathrm{d}\Gamma.
\end{equation}
which concludes the proof.

\end{appendices}


\bibliography{main.bib}

\end{document}